\let\wfs@comment@comment\comment
\let\comment\@undefined
\let\wfs@changes@comment\comment
\let\comment\@undefined
\newcommand\comment{%
    \ifthenelse{\equal{\@currenvir}{comment}}
    {\wfs@comment@comment}
    {\wfs@changes@comment}%
}
\newtheorem*{MainTheorem}{Main Theorem}
\newtheorem{theorem}{Theorem}[section]
\newtheorem{lemma}[theorem]{Lemma}
\newtheorem{proposition}[theorem]{Proposition}
\newcommand{\cH}{{\mathcal H}}
\newcommand{\cQ}{{\mathcal Q}}
\newcommand{\cX}{{\mathcal X}}
\newcommand{\fq}{{\mathbb F}_{q}}
\newcommand{\Fqs}{\mathbb{F}_{q^6}}
\newcommand{\PGqs}{\mathrm{PG}(2,\mathbb{F}_{q^6})}
\newcommand{\AGqs}{\mathrm{AG}(2,\mathbb{F}_{q^6})}
\title{Complete $(q+1)$-arcs in $\mathrm{PG}(2,\mathbb{F}_{q^6})$ from the Hermitian curve}
\author{Daniele Bartoli\thanks{Dipartimento di Matematica e Informatica, Universit\`a degli Studi di Perugia,  Perugia, Italy. daniele.bartoli@unipg.it} and 
Marco Timpanella\thanks{Dipartimento di Matematica e Informatica, Universit\`a degli Studi di Perugia,  Perugia, Italy.
marco.timpanella@unipg.it}}
\date{}
\begin{document}
\maketitle
\begin{abstract}
We prove that, if $q$ is large enough, the set of the $\mathbb{F}_{q^6}$-rational points of the Hermitian curve is a complete $(q+1)$-arc in $\PGqs$, addressing an open case from a recent paper by Korchm\'aros, Szőnyi and Nagy. An algebraic approach based on the investigation of some algebraic varieties attached to the arc is used.
\end{abstract}

\section{Introduction}


Let $\mathrm{PG}(2,\mathbb{F}_q)$ be the projective Galois plane over the field $\mathbb{F}_q$ with $q$ elements, where $q=p^h$ with $p$ be a prime and $h$ a positive integer.
A $(k, m)$-arc $\mathcal{A}$ in $\mathrm{PG}(2,\mathbb{F}_q)$ is a point set of size $k$, containing $m$ collinear points but such that no $m + 1$ points lie on the same line.  The arc $\mathcal{A}$ is said \emph{complete} if it is maximal with respect to set theoretical inclusion. 
Arcs in projective planes have a useful interpretation in Coding theory as the matrix $M$ whose columns are homogeneous coordinates of the points of a $(k,m)$-arc of $\mathrm{PG}(2,\mathbb{F}_q)$ with $m\geq 3$ is the parity check matrix of a $[k,k-3,3]_q$ almost MDS code which is non-extendible if and only if the corresponding $(k,m)$-arc is complete. Also, for fixed $m$ and $q$, the larger is $k$, the closer is the $[k,3,k-m]_q$ code generated by $M$ to the Griesmer bound.
 
The foundation of the theory of $(k,m)$-arcs  dates back to the seminal works of Segre and Barlotti in the 1950s. A general introduction to $(k, m)$-arcs can be found in the monograph \cite[Chapter 12]{MR1612570}, as well as in the survey paper \cite[Section 5]{MR2061806}. 

A natural example of a $(k,m)$-arc is the set $\mathcal{A}=\cX(\mathbb{F}_q)$ of $\mathbb{F}_q$-rational points of a plane curve $\cX$ (defined over $\mathbb{F}_q$) in $\mathrm{PG}(2,\overline{\mathbb{F}}_q)$ having no linear components; here $\overline{\mathbb{F}}_q$ denotes the algebraic closure of $\mathbb{F}_q$, $k=\#\cX(\mathbb{F}_q)$, and $m$ is less than or equal to the degree of $\cX$. However, establishing whether $\mathcal{A}$ is complete can be a very challenging problem which has received a lot of attention in the last few years. In particular, the case where there are only few possibilities for the characters of $\mathcal X(\fq)$ (i.e. the sizes of the intersection of $\mathcal X(\fq)$ and a line $\ell$ in $\textrm{PG}(2,\mathbb F_q)$) appears to be hard. This corresponds to the case where the code generated by the matrix
whose columns are homogeneous coordinates of the points of $\mathcal X({\fq})$
has few characters.

For $m\ge 3$, the best known curve with few characters in $\textrm{PG}(2,\mathbb F_{\bar{q}})$ is the Hermitian curve $\mathcal H_{ q}$ with affine equation 
$$
X^{ q+1}=Y^{q}+Y
$$
where $\bar q=q^{2r}$. The characters of $\mathcal H_{{ q}}$ in fact are just $0,1,2,{q}+1$. Moreover, when $r$ is odd, $\mathcal H_{{ q}}$ has the maximum number of points in $\mathrm{PG}(2,\mathbb F_{\bar{q}})$ for a degree $q+1$ curve, and hence $\mathcal H_{{ q}}(\mathbb F_{\bar{q}}) $ is the largest $(k,{ q}+1)$-arc that arises from an algebraic curve of degree ${ q}+1$, see \cite{HKT} for more details on maximal curves.

The completeness of 
$\mathcal H_{{ q}}$ in 
$\textrm{PG}(2,\mathbb{F}_{ q^{2r}})$ 
has been recently investigated in \cite{K-Hermitian}.
Using an approach based on Galois Theory together with $\rm{\check{C}}$ebotarev type density theorems, previously proposed in  \cite{MR4520241},  the authors proved that that for $r\geq 5$ the $\mathbb{F}_{{q}^{2r}}$-rational points of $\cH_q$ form a complete $({ q}+1)$-arc.

In this paper we focus on the remaining case $r=3$, using quite different techniques with respect to \cite{K-Hermitian}. In fact, the above mentioned Galois theory approach does not allow to prove the case $r=3$ due to the need of asymptotic results such as $\rm{\check{C}}$ebotarev type density theorem. The main idea in our approach is to translate the collinearity condition into the existence of suitable points in algebraic varieties of dimension $3$ over $\mathbb{F}_q$. On the one hand this makes the computations more involved; on the other hand the degree of these algebraic varieties is now fixed and small with respect to $q$, which allows to exploit asymptotic results such as Theorem \ref{Th:CafureMatera}. 

Our main result is the following.

\begin{MainTheorem}\label{Main theorem}
Let $\cH_q$ be the Hermitian curve of affine equation $X^{q+1}=Y^q+Y$. If $q$ is large enough, the set of the $\mathbb{F}_{q^6}$-rational points of $\cH_q$ is a complete $(|\cH(\mathbb{F}_{q^{6}})|,q+1)$-arc in $\mathrm{PG}(2,\mathbb{F}_{q^6})$.
\end{MainTheorem}

\section{Preliminaries on algebraic varieties}
In this section, we will provide some of the tools that will be used throughout the paper. For a more comprehensive introduction on algebraic varieties we refer to \cite{HKT}.
We first introduce the notations that will be used throughout the paper.
Let $p$ be a prime, $h$ be a positive integer, and $q=p^h$. Denote by $\mathrm{PG}(2,\mathbb{F}_q)$ the projective Galois plane over the field $\mathbb{F}_q$ with $q$ elements. The symbol $\overline{\mathbb{F}}_q$ denotes the algebraic closure of $\mathbb{F}_q$. The corresponding affine plane of order $q$ is denoted by $\mathrm{AG}(2,\mathbb{F}_q)=\mathrm{PG}(2,\mathbb{F}_q)\setminus \ell_{\infty}$, where $\ell_{\infty}$ is the line at infinity.  

A variety and more specifically a curve, i.e. a variety of dimension 1, are described by a certain set of equations with coefficients  in a finite field $\mathbb{F}_q$. A variety  defined by a unique equation is called a hypersurface. We say that a variety $\mathcal{V}$ is \emph{absolutely irreducible} if there are no varieties $\mathcal{V}^{\prime}$ and $\mathcal{V}^{\prime\prime}$ defined over the algebraic closure of $\mathbb{F}_q$ and different from $\mathcal{V}$ such that $\mathcal{V}= \mathcal{V}^{\prime} \cup \mathcal{V}^{\prime\prime}$. If a variety $\mathcal{V}\subset \mathrm{PG}(r,\mathbb{F}_q)$ is defined by $F_i(X_0,\ldots, X_r)=0$, for  $i=1,\ldots, s$, an $\mathbb{F}_{q}$-rational point of $\mathcal{V}$ is a point $(x_0:\ldots:x_r) \in \mathrm{PG}(r,\mathbb{F}_q)$ such that $F_i(x_0,\ldots, x_r)=0$, for  $i=1,\ldots s$. A point is affine if $x_0\neq 0$. The set of the $\mathbb{F}_q$-rational points of $\mathcal{V}$ is usually denoted by $\mathcal{V}(\mathbb{F}_q)$. We usually denote by the same symbol homogenized polynomials and their dehomogenizations, if the context is clear.

In our machinery it will be crucial to estimate the number of $\mathbb{F}_q$-rational points of certain algebraic varieties.  The first estimate on the number of $\mathbb{F}_{q}$-rational points of an algebraic variety was given by Lang and Weil \cite{MR65218} in 1954. In this paper we will use the following refinement due to Cafure and Matera \cite{MR2206396}, where the estimate is given in terms of the dimension and the degree of the variety. We recall that the degree of an algebraic variety of dimension $r$ in $\mathbb{P}^n(\mathbb{F}_q$) is defined as the 
number of intersection points  of the variety with a general linear subspace of dimension $n-r$.

\begin{theorem}\cite[Theorem 7.1]{MR2206396}\label{Th:CafureMatera}
Let $\mathcal{V}\subset\mathrm{AG}(N,\mathbb{F}_q)$ be an absolutely irreducible variety defined over $\mathbb{F}_q$ of dimension $r>0$ and degree $\delta$. If $q>2(r+1)\delta^2$, then the following estimate holds:
$$|\mathcal{V}(\mathbb{F}_{q})\cap \mathrm{AG}(N,\mathbb{F}_q)-q^r|\leq (\delta-1)(\delta-2)q^{r-1/2}+5\delta^{13/3} q^{r-1}.$$
\end{theorem}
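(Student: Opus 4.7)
Since the characters of $\cH_q$ in $\PGqs$ are $\{0,1,2,q+1\}$, a line meeting the arc $\cH_q(\Fqs)$ in three or more points automatically meets it in $q+1$ points. Completeness of $\cH_q(\Fqs)$ therefore reduces to showing that, for every $P\in\PGqs\setminus\cH_q(\Fqs)$, there exist three distinct $\Fqs$-rational points of $\cH_q$ collinear with $P$: adding $P$ would then create $q+2$ collinear arc points, contradicting the $(q+1)$-arc property. I would fix $P=(u,v)$ affinely (the unique point of $\cH_q$ at infinity is $\Fqs$-rational, and vertical secants admit a separate, simpler treatment). A non-vertical line through $P$ with slope $a\in\Fqs$ meets $\cH_q$ at the points $(x_i,v+a(x_i-u))$ whose $x$-coordinates are the roots of
\[
F_{u,v,a}(X)\;=\;X^{q+1}-a^q X^q-aX-\bigl((v-au)^q+(v-au)\bigr),
\]
and the goal becomes exhibiting $(a,x_1,x_2,x_3)\in\Fqs^4$ with pairwise distinct $x_i$ and $F_{u,v,a}(x_i)=0$ for $i=1,2,3$.

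Treating $(u,v)$ as additional variables, this system defines an affine $\F_q$-variety $\mathcal{V}\subset\mathbb{A}^6_{\F_q}$ in coordinates $(u,v,a,x_1,x_2,x_3)$, of expected dimension $3$ (six variables, three equations). Its $\Fqs$-rational points, away from the discriminant locus $\prod_{i<j}(x_i-x_j)=0$, correspond precisely to configurations (affine $P$, slope, three collinear intersection points) producing a $(q+1)$-secant of $\cH_q(\Fqs)$ through $P\notin\cH_q$. My plan would be to prove $\mathcal{V}$ absolutely irreducible of dimension $3$ and apply Theorem~\ref{Th:CafureMatera} over $\Fqs$ (i.e.\ with the theorem's ``$q$'' taking the value $q^6$) to obtain $|\mathcal{V}(\Fqs)|\sim q^{18}$ for $q$ large. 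Consideration of the projection $\pi:\mathcal{V}\to\mathbb{A}^2_{(u,v)}$, together with a verification that for generic $P$ the fibre $\pi^{-1}(P)$ remains absolutely irreducible of dimension $1$, would then, via a further application of Theorem~\ref{Th:CafureMatera}, produce the required $(q+1)$-secant through every generic $P$.

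The principal technical obstacles are twofold. First, establishing absolute irreducibility of $\mathcal{V}$: two sources of symmetry threaten a decomposition, namely the $S_3$-action permuting $(x_1,x_2,x_3)$ and potential hidden relations arising from the Frobenius structure on the coefficients. Ruling these out will require a monodromy-style analysis of the family of line--curve intersections of $\cH_q$, showing that the Galois group of the generic intersection divisor is sufficiently large to preclude a non-trivial factorization. Second, since $F_{u,v,a}$ has degree $q+1$, the defining equations of $\mathcal{V}$ have degree of order $q$, which na\"ively renders the Cafure--Matera bound ineffective (the error term is controlled only when the degree is small compared to powers of $q$). An essential preliminary step will therefore be to reformulate the defining system in a way that lowers its effective degree, presumably by subtracting pairs $F_{u,v,a}(x_i)-F_{u,v,a}(x_j)=0$ and using the characteristic-$p$ identity $x_i^q-x_j^q=(x_i-x_j)^q$ to divide out the nonzero factor $x_i-x_j$, and by introducing auxiliary variables to flatten the remaining high-degree monomials. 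Finally, points $P$ in the exceptional locus --- those for which $\pi^{-1}(P)$ degenerates or becomes reducible --- will necessarily lie in a proper closed subvariety of $\mathbb{A}^2$, and will need to be covered by separate arguments, typically by invoking the classical $\F_{q^2}$-rational $(q+1)$-secants arising from the unitary polarity of $\cH_q$ together with a case analysis according to the $\mathrm{PGU}(3,q^2)$-orbit of $P$ in $\PGqs$.
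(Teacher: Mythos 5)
Your text does not address the statement at all. Theorem~\ref{Th:CafureMatera} is the Cafure--Matera refinement of the Lang--Weil estimate: a bound on $\bigl||\mathcal{V}(\mathbb{F}_q)\cap\mathrm{AG}(N,\mathbb{F}_q)|-q^r\bigr|$ for an absolutely irreducible variety of dimension $r$ and degree $\delta$. The paper quotes it verbatim from \cite[Theorem 7.1]{MR2206396} and offers no proof, nor could one be expected here --- it is a deep result whose proof rests on effective versions of the Lang--Weil argument (Bertini-type reductions to curves plus the Weil bound with controlled error terms), none of which appears in your proposal. What you have written instead is a strategy for the paper's \emph{Main Theorem} (completeness of $\cH_q(\Fqs)$), in which Theorem~\ref{Th:CafureMatera} is invoked as a black box. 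Using the statement as a hypothesis in an argument for a different result is not a proof of the statement.

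Even read as a sketch for the Main Theorem, your plan diverges from the paper's and stalls exactly where you say it does: the defining equations $F_{u,v,a}(x_i)=0$ have degree $q+1$, so the Cafure--Matera error term $(\delta-1)(\delta-2)q^{r-1/2}+5\delta^{13/3}q^{r-1}$ swamps the main term and the bound says nothing. You flag this but only gesture at a fix. The paper's actual resolution is different and essential: for a fixed point $(a,b)$ it eliminates $x$ entirely by iterating the relation $x^{q}=(mx+ma+m^qa^q+\gamma)/(x-m^q)$ six times, imposing $x^{q^6}=x$, and extracting polynomial conditions $f_1=f_2=f_3=0$ in the six quantities $m,m^q,\ldots,m^{q^5}$ of \emph{bounded} degree (at most $60$ for the product), viewed via a normal basis as a variety of dimension $3$ over $\mathbb{F}_q$ in $\mathbb{A}^6$; only then is Theorem~\ref{Th:CafureMatera} applicable. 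Your variety in $(u,v,a,x_1,x_2,x_3)$ never reaches that form.
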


\section{Construction of a $(q+1)$-arc in $\mathrm{PG}(2,\mathbb{F}_{q^6})$ from the Hermitian curve} 

Henceforth, let $\mathcal{H}_q$ denote the Hermitian curve with the affine equation
$$
X^{q+1}=Y^q+Y.
$$
In order to prove our Main Theorem, in this section we will show the existence of an $\mathbb{F}_{q^6}$-rational $(q+1)$-secant to $\cH_q(\mathbb{F}_{q^6})$ through any point in $\PGqs$, i.e. an  $\mathbb{F}_{q^6}$-rational line meeting $\cH_q(\mathbb{F}_{q^6})$ in exactly $q+1$ distinct points. As pointed out in \cite{K-Hermitian}, it is sufficient to prove our claim for points in $\AGqs$, thanks to the action of $\mathrm{PGU}(3,\mathbb{F}_{q})$ leaving invariant $\cH_q(\Fqs)$ and preserving no lines in $\PGqs$.

Note that, as $\mathcal{H}_q$ is an $\Fqs$-maximal curve, we have $|\cH_q(\mathbb{F}_{q^6})|=q^6+1+q(q-1)q^3$. 


From now on, let  $P=(a,b)$ be a point in $\AGqs$, and 
 $\ell$ be an $\mathbb{F}_{q^6}$-rational line through $P$ with affine equation
$$
Y=m(X-a)+b,
$$
where $m\in \Fqs$.  Then, the  size of $|\cH_q\cap\ell|$ in $\PGqs$ depends on the number of roots in $\Fqs$ of the polynomial
\begin{equation*}\label{eq:polIntersez}
f(X):=X^{q+1}-(m(X-a)+b)^{q}-(m(X-a)+b)=X^{q+1}-m^qX^q-mX-ma-m^qa^q-b-b^q.
\end{equation*}

For the following results on the roots of $f(X)$ and their connections with the intersections of $\cH_q$ and $\ell$ in $\PGqs$, we refer to \cite[Section 4.1]{K-Hermitian}.

\begin{proposition}\label{prop1}
Let $a,b,m\in \mathbb{F}_{q^6}$. Then, the polynomial $f(X)$ has either $0$, $1$, $2$ or $q+1$ roots in $\overline{\mathbb{F}}_q$. In particular, $f(X)$ has multiple roots if and only if $m^{q+1}+m^qa^q+ma+b^q+b=0$, and in this case $f(X)=(X-m^q)(X^q-m)$.
\end{proposition}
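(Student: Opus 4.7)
The proposition contains two assertions: a criterion for $f$ to have multiple roots together with the explicit factorisation in that case, and the dichotomy on the number of distinct roots. The first is a short derivative computation; the second reduces to analysing the fibres of $\psi(X) := (X-m^q)(X^q-m)$, following the geometric/Galois-theoretic approach of \cite[Section 4.1]{K-Hermitian}.

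For the multiple-root criterion, in characteristic $p$ one has $f'(X) = (q+1)X^q - q\,m^q X^{q-1} - m = X^q - m$. Any multiple root $X_0$ must satisfy $X_0^q = m$, hence $X_0 = m^{1/q}$, the unique $q$-th root in characteristic $p$. Substituting,
\[
f(m^{1/q}) = m\cdot m^{1/q} - m^{q+1} - m\cdot m^{1/q} - (ma + m^q a^q + b + b^q) = -\bigl(m^{q+1} + m^q a^q + ma + b^q + b\bigr),
\]
which vanishes precisely when $N := m^{q+1} + m^q a^q + ma + b^q + b = 0$. Expanding $(X-m^q)(X^q-m) = X^{q+1} - m^q X^q - mX + m^{q+1}$ and comparing with $f(X)$, one finds $f(X) = (X-m^q)(X^q-m) - N$, which at $N=0$ is precisely the factorisation claimed. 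In that case $f(X) = (X-m^q)(X-m^{1/q})^q$ over $\overline{\mathbb{F}}_q$, so the distinct roots are $m^q$ and $m^{1/q}$; they coincide exactly when $m^{q^2}=m$, i.e.\ $m \in \mathbb{F}_{q^2}$. Both lie in $\mathbb{F}_{q^6}$, producing one or two $\mathbb{F}_{q^6}$-rational roots.

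When $N \neq 0$, $f$ is separable of degree $q+1$ and has $q+1$ distinct roots in $\overline{\mathbb{F}}_q$; the remaining content of the proposition is that the number of these roots lying in $\mathbb{F}_{q^6}$ is $0$, $1$, $2$, or $q+1$, never an intermediate value $3, 4, \ldots, q$. The main obstacle is to rule out these intermediate counts. Suppose $X_1, X_2 \in \mathbb{F}_{q^6}$ are two distinct roots; subtracting $f(X_1) = f(X_2) = 0$ and using $(X_1 - X_2)^q = X_1^q - X_2^q$ in characteristic $p$ yields, with $u := X_1 - X_2$,
\[
u^{q-1}(X_1 - m^q) = m - X_2^q,
\]
together with the symmetric identity obtained by swapping $X_1$ and $X_2$. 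Combining these with the defining equation $(X_i - m^q)(X_i^q - m) = N$ for $i = 1, 2$ and applying Galois descent over $\mathbb{F}_{q^6}$ to a hypothetical third $\mathbb{F}_{q^6}$-rational root, one forces all remaining $q-2$ roots to lie in $\mathbb{F}_{q^6}$, recovering the $q+1$ case. The detailed geometric realisation of this Galois step is the hard part and is supplied by \cite[Section 4.1]{K-Hermitian}.
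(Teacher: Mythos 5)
Your treatment of the second assertion is correct and complete: in characteristic $p$ one has $f'(X)=X^q-m$, so a multiple root must equal $m^{1/q}$; the evaluation $f(m^{1/q})=-(m^{q+1}+m^qa^q+ma+b^q+b)$ and the identity $f(X)=(X-m^q)(X^q-m)-(m^{q+1}+m^qa^q+ma+b^q+b)$ then give both the criterion and the stated factorisation, together with the observation that $(X-m^q)(X^q-m)$ has exactly the two roots $m^q$ and $m^{1/q}=m^{q^5}$, coinciding precisely when $m\in\mathbb{F}_{q^2}$. It is worth noting that the paper offers no proof of this proposition at all --- it simply refers to \cite[Section 4.1]{K-Hermitian} --- so this half of your write-up supplies something the paper omits.

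The first assertion is where your proposal has a genuine gap. (As you implicitly recognise, the count $0,1,2,q+1$ can only refer to roots in $\mathbb{F}_{q^6}$, since a monic polynomial of degree $q+1$ always has a root in $\overline{\mathbb{F}}_q$.) Your subtraction identity $u^{q-1}(X_1-m^q)=m-X_2^q$ is correct, but nothing in your text extracts from it the exclusion of the intermediate counts $3,\dots,q$: the sentence ``applying Galois descent \dots one forces all remaining $q-2$ roots to lie in $\mathbb{F}_{q^6}$'' asserts the conclusion rather than proving it, and you then defer the ``hard part'' to the same reference the paper cites. The missing idea is in fact set up by the paper immediately after the proposition: for any root $x\neq m^q$ one has $x^q=\frac{mx+ma+m^qa^q+b+b^q}{x-m^q}$, so the Frobenius acts on the $q+1$ distinct roots as a fractional linear map $\sigma$ with coefficients in $\mathbb{F}_{q^6}$, and iterating six times gives $x^{q^6}=\tau(x)$ for a single M\"obius transformation $\tau$ independent of the chosen root. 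A root lies in $\mathbb{F}_{q^6}$ exactly when it is a fixed point of $\tau$, and a fractional linear map is either the identity on the root set (all $q+1$ roots rational) or has at most two fixed points (at most two rational roots); this is exactly what \eqref{x6-X0} encodes, the numerator being a quadratic in $X$ that either vanishes identically or has at most two zeros. Your argument should either carry out this step explicitly or be restructured around it; as written, the dichotomy is not established.
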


\begin{proposition}\label{propK1}
Let $a,b,m\in \mathbb{F}_{q^6}$. Then $\ell$ is a $(q+1)$-secant to $\cH_q(\mathbb{F}_{q^6})$ if and only if $f(X)$ has $q+1$ distinct roots  in $\mathbb{F}_{q^6}$.
\end{proposition}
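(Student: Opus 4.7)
The plan is to argue directly from the bijection between roots of $f$ and affine intersection points, combined with a check at infinity.

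First I would observe that by construction a point $(x,y)\in \mathrm{AG}(2,\mathbb{F}_{q^6})$ lies on $\ell$ if and only if $y=m(x-a)+b$, and such a point lies on $\cH_q$ if and only if $x^{q+1}=y^q+y$. Substituting the affine equation of $\ell$ into the equation of $\cH_q$ therefore shows that the affine $\mathbb{F}_{q^6}$-rational points of $\ell\cap\cH_q$ are in a one-to-one correspondence with the $\mathbb{F}_{q^6}$-roots of $f(X)$ (the $y$-coordinate being uniquely determined by $x$). Distinct roots of $f$ yield distinct affine intersection points.

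Next I would check what happens at the line at infinity $\ell_\infty$. Homogenising, the only point of $\cH_q$ on $\ell_\infty$ is $Y_\infty:=(0:1:0)$, while the unique point at infinity of $\ell$ is $(1:m:0)$ if $m\neq 0$ and $(1:0:0)$ if $m=0$; in either case it differs from $Y_\infty$. Hence $\ell\cap\cH_q\cap\ell_\infty=\emptyset$, so every point of $\ell\cap\cH_q(\mathbb{F}_{q^6})$ is affine. Consequently
\[
|\ell\cap\cH_q(\mathbb{F}_{q^6})|\;=\;\#\{x\in\mathbb{F}_{q^6}\colon f(x)=0\}.
\]

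From here the equivalence is immediate. If $f(X)$ has $q+1$ distinct roots in $\mathbb{F}_{q^6}$, then $\ell$ meets $\cH_q(\mathbb{F}_{q^6})$ in $q+1$ points and is therefore a $(q+1)$-secant. Conversely, if $\ell$ is a $(q+1)$-secant then $f(X)$ has at least $q+1$ distinct roots in $\mathbb{F}_{q^6}$; since $f$ has degree $q+1$ this forces exactly $q+1$ distinct roots. There is no real obstacle here: the statement is essentially a dictionary between the geometric condition on $\ell$ and the algebraic condition on $f$. The only small point requiring care is the behaviour at infinity, which is handled by the one-line computation above using the fact that $\cH_q$ has a single point at infinity.
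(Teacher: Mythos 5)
Your argument is correct. Note that the paper does not actually prove this proposition: it is quoted from \cite[Section 4.1]{K-Hermitian}, so there is no internal proof to compare against. Your reasoning is the natural self-contained one — the substitution gives a bijection between $\mathbb{F}_{q^6}$-roots of $f$ and affine points of $\ell\cap\cH_q(\Fqs)$, and the check that the homogenized Hermitian curve meets $\ell_\infty$ only in $(0:1:0)$, which never lies on a line of the form $Y=m(X-a)+b$, rules out contributions at infinity; together with $\deg f=q+1$ this yields both implications. The only stylistic caveat is that the result is stated for the specific non-vertical lines $\ell$ introduced just before it, so your treatment of the point at infinity of $\ell$ as $(1:m:0)$ is exactly the case that needs handling.
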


By Proposition \ref{propK1}, to prove Theorem \ref{Main theorem} we must show that for each $a,b\in \Fqs$ there exists $m\in\Fqs$ such that $f(X)$ has $q+1$ distinct roots in $\Fqs$.

Our approach is the following. Henceforth, we let 
$$\gamma:=b+b^q$$ 
and  
$$
A:=(a^{q+1}-a^{q^2+q})+(a^{q+1}-a^{q^2+q})^{q^2}+(a^{q+1}-a^{q^2+q})^{q^4}=\text{Tr}_{q^2}^{q^6}(a^{q+1}-a^{q^2+q}).
$$
Clearly $\gamma-\gamma^q+\gamma^{q^2}-\gamma^{q^3}+\gamma^{q^4}-\gamma^{q^5}=0$ and $f(X)=X^{q+1}-m^qX^q-mX-ma-m^qa^q-\gamma.$ Also, observe that $a\in\mathbb{F}_{q^2}$ yields $A=0$. 
Now, let $x\in\mathbb{F}_{q^6}$ be a root of $f(X)$. By Proposition \ref{prop1}, if $m^{q+1}+m^qa^q+ma+b^q+b\neq 0$, then $x\neq m^q$.
Therefore, we can write
\begin{eqnarray*}
    x^q&=&\frac{mx+ma+m^qa^q+\gamma}{x-m^q}:=x_q,\\
    x^{q^2}&=&\frac{m^qx_q+m^qa^q+m^{q^2}a^{q^2}+\gamma^q}{x_q-m^{q^2}}:=x_{q^2},\\
    x^{q^3}&=&\frac{m^{q^2}x_{q^2}+m^{q^2}a^{q^2}+m^{q^3}a^{q^3}+\gamma^{q^2}}{x_{q^2}-m^{q^3}}:=x_{q^3},\\
    x^{q^4}&=&\frac{m^{q^3}x_{q^3}+m^{q^3}a^{q^3}+m^{q^4}a^{q^4}+\gamma^{q^3}}{x_{q^3}-m^{q^4}}:=x_{q^4},\\
    x^{q^5}&=&\frac{m^{q^4}x_{q^4}+m^{q^4}a^{q^4}+m^{q^5}a^{q^5}+\gamma^{q^4}}{x_{q^4}-m^{q^5}}:=x_{q^5},\\
    x^{q^6}&=&\frac{m^{q^5}x_{q^5}+m^{q^5}a^{q^5}+ma+\gamma^{q^5}}{x_{q^5}-m}:=x_{q^6}.\\
\end{eqnarray*}

As $x=x^{q^6}$, $x$ must be a root of $h(X):=X_{q^6}-X$. By a MAGMA computation, $h(X)=0$ reads
\begin{equation}\label{x6-X0}
\frac{f_3(m,m^{q},\ldots,m^{q^5})+f_2(m,m^{q},\ldots,m^{q^5})X+f_1(m,m^{q},\ldots,m^{q^5})X^2}{f_5(m,m^{q},\ldots,m^{q^5})+f_4(m,m^{q},\ldots,m^{q^5})X}=0,
\end{equation}
where the expressions of $f_i(\mathbf{y})$, $i=1,\ldots,5$ are given in Section \ref{Appendice}.

In terms of the polynomials $f_i(X)$, the following holds.
\begin{proposition}\label{prop2}
Let $a,b\in \mathbb{F}_{q^6}$ and $m\in \mathbb{F}_{q^6}$ such that $m^{q+1}+m^qa^q+ma+b^q+b\neq 0$. Then, the line $Y=m(X-a)+b$ intersects $\cH_q(\Fqs)$ in exactly $q+1$ distinct points in $\PGqs$ if and only if $f_i(m,m^{q},\ldots,m^{q^5})=0$, $i=1,2,3$ and $(f_4(m,m^{q},\ldots,m^{q^5}), f_5(m,m^{q},\ldots,m^{q^5}))\neq (0,0)$. 
\end{proposition}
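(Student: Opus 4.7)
The plan is to exploit the iteration $x\mapsto x_q\mapsto\cdots\mapsto x_{q^6}$ together with the rational identity \eqref{x6-X0} in order to translate ``$f$ has $q+1$ distinct roots in $\mathbb{F}_{q^6}$'' into the vanishing conditions on $f_1,f_2,f_3$.

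As a preliminary step, for any root $x\in\overline{\mathbb{F}}_q$ of $f$ one has to check that the iteration is well defined, i.e.\ that no intermediate denominator $x_{q^k}-m^{q^{k+1}}$ vanishes. The base case $x\neq m^q$ is ensured by the standing hypothesis through Proposition \ref{prop1} (no multiple roots), and the inductive step uses the injectivity of the Frobenius on $\overline{\mathbb{F}}_q$: the equality $x^{q^k}=m^{q^{k+1}}=(m^q)^{q^k}$ would force $x=m^q$. Hence $x_{q^j}=x^{q^j}$ for every $j$, and substituting into \eqref{x6-X0} one obtains, for each root $x$ of $f$,
$$x^{q^6}-x=\frac{f_1 x^2+f_2 x+f_3}{f_4 x+f_5}\qquad\text{whenever }f_4 x+f_5\neq 0.$$

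For the forward direction, if $f$ has $q+1$ distinct roots in $\mathbb{F}_{q^6}$ then each of them kills the left-hand side. The degree-$\leq 1$ polynomial $f_4X+f_5$ vanishes at at most one of these $q+1$ roots, so the degree-$\leq 2$ polynomial $f_1X^2+f_2X+f_3$ vanishes at $q\geq 3$ distinct points, forcing $f_1=f_2=f_3=0$. For the converse, if $f_1=f_2=f_3=0$ and $(f_4,f_5)\neq(0,0)$, then \eqref{x6-X0} reduces to $x_{q^6}\equiv X$ as a rational function, so every root $x$ of $f$ with $f_4 x+f_5\neq 0$ satisfies $x^{q^6}=x$ and hence lies in $\mathbb{F}_{q^6}$. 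Since $f_4X+f_5$ has at most one zero in $\overline{\mathbb{F}}_q$ and the roots of $f$ lying outside $\mathbb{F}_{q^6}$ form a $\mathrm{Gal}(\overline{\mathbb{F}}_q/\mathbb{F}_{q^6})$-stable set whose orbits have size $\geq 2$, all roots of $f$ must lie in $\mathbb{F}_{q^6}$; Proposition \ref{prop1} combined with the no-multiple-roots hypothesis then yields exactly $q+1$ distinct roots.

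The delicate point is the nondegeneracy $(f_4,f_5)\neq(0,0)$ in the forward implication: it is not automatically forced by the $(q+1)$-secant property and is precisely the condition that prevents \eqref{x6-X0} from collapsing to the indeterminate form $0/0$. Extracting it will require a direct look at the explicit shapes of $f_4$ and $f_5$ given in Section \ref{Appendice}, or else a finer bookkeeping showing that the simultaneous vanishing of $f_4$ and $f_5$ corresponds to a degenerate configuration which never yields a $(q+1)$-secant.
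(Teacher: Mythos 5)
Your overall strategy is the paper's: iterate the Frobenius relations, substitute a root of $f$ into \eqref{x6-X0}, and play the degree-$\le 2$ numerator off against the $q+1$ distinct roots guaranteed by Proposition \ref{prop1}. The backward direction is complete (your Galois-orbit argument promoting ``at least $q$ roots in $\mathbb{F}_{q^6}$'' to ``all of them'' is exactly what the paper leaves implicit), and your check that no intermediate denominator $x_{q^k}-m^{q^{k+1}}$ vanishes is a point the paper glosses over.

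The gap is the one you flag yourself, and it is genuine: in the forward direction you need $(f_4,f_5)\neq(0,0)$ twice. It is part of the conclusion of the proposition, and your own root count (``$f_4X+f_5$ vanishes at at most one of the $q+1$ roots'') silently assumes it, since if $f_4=f_5=0$ that linear polynomial vanishes at all of them and you cannot conclude $f_1=f_2=f_3=0$. Deferring this to ``a direct look at the explicit shapes of $f_4$ and $f_5$'' leaves the forward implication unproved. The missing observation --- which is what the paper is invoking when it asserts that a root $\overline{x}_i$ lies in $\mathbb{F}_{q^6}$ only if $h(\overline{x}_i)=0$ \emph{and} $f_5+f_4\overline{x}_i\neq 0$ --- is structural rather than computational: each step $x_{q^k}\mapsto x_{q^{k+1}}$ is a fractional linear map, so the left-hand side of \eqref{x6-X0} is a composite of six Möbius transformations and $f_5+f_4X$ is (up to normalization) the denominator entry of the corresponding matrix product. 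By the cocycle identity for denominators of composed fractional linear maps, its value at a root $x$ of $f$ equals the product of the intermediate denominators, namely $\prod_{k=0}^{5}\bigl(x^{q^k}-m^{q^{k+1}}\bigr)=\prod_{k=0}^{5}(x-m^q)^{q^k}$, which is nonzero precisely because $x\neq m^q$ --- your own preliminary step. Hence $f_4x+f_5\neq 0$ at \emph{every} root of $f$; in particular $(f_4,f_5)\neq(0,0)$ and the numerator vanishes at all $q+1$ roots, closing the forward direction with no appeal to the Appendix.
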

\begin{proof}
As $m^{q+1}+m^qa^q+ma+b^q+b\neq 0$, $f(X)$ has $q+1$ distinct roots $\{\overline{x}_0,\ldots,\overline{x}_q\}$ in $\overline{\mathbb{F}}_q$. By the definition of $h(X)$, $\overline{x}_i\not\in \mathbb{F}_{q^6}$ if and only if either $f_5(m,m^{q},\ldots,m^{q^5})+f_4(m,m^{q},\ldots,m^{q^5})\overline{x}_i=0$ or $h(x_i)\neq 0$. Assume first that all the roots of $f(X)$ lie in $\mathbb{F}_{q^6}$. Then $h(\overline{x}_i)=0$ and $f_5(m,m^{q},\ldots,m^{q^5})+f_4(m,m^{q},\ldots,m^{q^5})\overline{x}_i\neq 0$ for any $i\in \{1,\ldots,q\}$. Therefore $f_3(m,m^{q},\ldots,m^{q^5})+f_2(m,m^{q},\ldots,m^{q^5})X+f_1(m,m^{q},\ldots,m^{q^5})X^2$ is the zero polynomial whereas $f_5(m,m^{q},\ldots,m^{q^5})+f_4(m,m^{q},\ldots,m^{q^5})X$ is not the zero polynomial.

On the other hand, if $f_i(m,m^{q},\ldots,m^{q^5})=0$, $i=1,2,3$ and $$(f_4(m,m^{q},\ldots,m^{q^5}), f_5(m,m^{q},\ldots,m^{q^5})\neq (0,0),$$ then at least $q$ roots of $f(X)$ lie in $\mathbb{F}_{q^6}$. So, all of them lie in $\mathbb{F}_{q^6}$. 
The claim now follows because of the correspondence between the intersections of $\cH_q$ and $\ell$ in $\PGqs$ and the roots of $f(X)$ in $\Fqs$.
\end{proof}

We will now prove Theorem \ref{Main theorem} in two steps, according to $A\neq 0$ or $A=0$. Actually, our computations provide additional information. Indeed, we prove that through a point such that $A\neq 0$, the number of $(q+1)-$secants to $\cH_q(\Fqs)$ is in the order of $q^3$, see Theorem \ref{mainsec1}. On the other hand, for a point such that $A=0$, we show that there always exists a $(q+1)$-secant with slope in $\mathbb{F}_{q^2}$, see Theorem \ref{mainsec2}. 

\subsection{The case $A\neq 0$}

The aim of this section is to prove the following result.

\begin{theorem}\label{mainsec1}
Through any point $(a,b)\in \AGqs$ such that $A\neq 0$ there are  $O(q^3)$ $\Fqs$-rational $(q+1)$-secants to $\mathcal{H}_q(\Fqs)$.
\end{theorem}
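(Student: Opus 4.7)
The plan is to reduce the counting problem to estimating the number of $\mathbb{F}_q$-rational points of an absolutely irreducible variety over $\mathbb{F}_q$ and then apply Theorem~\ref{Th:CafureMatera}. By Proposition~\ref{prop2}, counting the $(q+1)$-secants through $(a,b)$ amounts to counting $m\in \mathbb{F}_{q^6}$ that satisfy $f_1=f_2=f_3=0$ (evaluated at $(m,m^q,\ldots,m^{q^5})$) together with the open conditions $(f_4,f_5)\neq(0,0)$ and $m^{q+1}+m^qa^q+ma+\gamma\neq 0$; we aim to show this count is $\Theta(q^3)$, in particular $O(q^3)$.

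First I would view $(m,m^q,\ldots,m^{q^5})$ as the $q$-Frobenius orbit of a point $m\in\mathbb{F}_{q^6}$ and encode it via the $\mathbb{F}_q$-coordinates $m_0,\ldots,m_5$ of $m$ in a fixed $\mathbb{F}_q$-basis of $\mathbb{F}_{q^6}$. Each $m^{q^i}$ becomes an explicit $\mathbb{F}_q$-linear form in the $m_j$, so that $f_j(m,m^q,\ldots,m^{q^5})$ is an element of $\mathbb{F}_{q^6}$ depending polynomially on $(m_0,\ldots,m_5)$. A key observation driving the construction is that the $f_j$ arise from clearing denominators in the Frobenius-cyclic composition $x_{q^6}-x$; consequently they enjoy a Galois-cyclic symmetry of the form $f_j^\sigma(Y_1,\ldots,Y_5,Y_0)=f_j(Y_0,\ldots,Y_5)$ (with $\sigma$ the $q$-Frobenius acting on coefficients), which implies that each scalar equation $f_j(m,m^q,\ldots,m^{q^5})=0$ contributes effectively only one $\mathbb{F}_q$-polynomial relation among the $m_i$, not six. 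Together the three $f_j=0$ cut out an affine variety $\mathcal{U}\subset\mathbb{A}^6_{\mathbb{F}_q}$ of expected dimension~$3$, whose $\mathbb{F}_q$-rational points are in one-to-one correspondence with the admissible slopes~$m$.

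The technical heart, and the main obstacle I foresee, is establishing that $\mathcal{U}$ is absolutely irreducible of the expected dimension when $A\neq 0$. This is where the hypothesis enters: the quantity $A=\mathrm{Tr}_{q^2}^{q^6}(a^{q+1}-a^{q^2+q})$ appears in the leading part of the $f_j$'s (note that $a\in\mathbb{F}_{q^2}$ forces $A=0$, and this degenerate situation is exactly what is handled separately in Theorem~\ref{mainsec2}); when $A\neq 0$ the top-degree part is nondegenerate enough to prevent the defining polynomials from sharing a common factor or a positive-dimensional component in their singular locus. A practical route is to project $\mathcal{U}$ onto a lower-dimensional affine space, inspect the resulting hypersurface(s) through their leading forms, and verify irreducibility either by elimination arguments depending on $A\neq 0$ or by exhibiting a nonsingular $\overline{\mathbb{F}}_q$-rational point and bounding the singular locus to apply a standard complete-intersection irreducibility criterion.

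Once absolute irreducibility of $\mathcal{U}$ is established and its degree is bounded independently of $q$, Theorem~\ref{Th:CafureMatera} immediately gives $|\mathcal{U}(\mathbb{F}_q)|=q^3+O(q^{5/2})$. The two excluded loci $(f_4,f_5)=(0,0)$ and $m^{q+1}+m^qa^q+ma+\gamma=0$ are proper closed subvarieties of $\mathcal{U}$, hence by the same theorem applied to their components they account only for $O(q^{5/2})$ exceptional values of $m$. Subtracting these exceptions leaves $q^3+O(q^{5/2})=\Theta(q^3)$ admissible slopes, i.e.\ $\Theta(q^3)$ distinct $\mathbb{F}_{q^6}$-rational $(q+1)$-secants to $\mathcal{H}_q(\mathbb{F}_{q^6})$ through $(a,b)$, which proves the theorem.
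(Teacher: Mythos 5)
Your overall architecture coincides with the paper's: reduce via Proposition~\ref{prop2} to counting slopes $m$ with $f_1=f_2=f_3=0$, realize these as $\mathbb{F}_q$-rational points of a three-dimensional variety obtained by descending to $\mathbb{F}_q$ through a (normal) basis of $\mathbb{F}_{q^6}$, apply Theorem~\ref{Th:CafureMatera}, and discard the loci $(f_4,f_5)=(0,0)$ and $m^{q+1}+m^qa^q+ma+\gamma=0$. However, the proposal leaves unproved precisely the verifications that constitute the actual content of the argument, and one of its supporting claims is wrong as stated.

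First, the asserted symmetry $f_j^\sigma(Y_1,\ldots,Y_5,Y_0)=f_j(Y_0,\ldots,Y_5)$ does not hold for the $f_j$ (already in $f_1$ the variable $y_0$ plays a distinguished role, appearing squared), so the inference that each equation ``contributes only one $\mathbb{F}_q$-relation'' is unjustified; taken literally, expanding $f_j(m,\ldots,m^{q^5})=0$ in an $\mathbb{F}_q$-basis gives six scalar equations each, and your expected-dimension count collapses. The paper avoids this by keeping $x_0,\ldots,x_5$ as independent variables, replacing the $f_i$ by auxiliary polynomials $g_1,g_2,g_3$, and verifying (computationally) that the ideal $\langle\overline{g}_1,\overline{g}_2,\overline{g}_3\rangle$ is Frobenius-invariant, so that the threefold it cuts out is defined over $\mathbb{F}_q$ and its $\mathbb{F}_q$-points are exactly the orbits $(m,m^q,\ldots,m^{q^5})$. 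Second, absolute irreducibility is not generic ``nondegeneracy of the top-degree part'': the paper derives it from the specific structural fact that $g_1,g_2,g_3$ are linear in $x_1,x_3,x_0$ respectively, with coefficients $G_1,G_2,G_3$ that are nonzero polynomials in $x_2,x_4,x_5$ alone --- and the nonvanishing of these coefficients is exactly where $A\neq 0$ enters --- so the variety is essentially a graph over $(x_2,x_4,x_5)$-space. Third, and most seriously, you simply assert that $\{f_4=f_5=0\}$ is a \emph{proper} closed subvariety of the threefold. This is not automatic: if $f_4$ and $f_5$ both vanished identically on the irreducible component, no admissible $m$ would survive and the theorem would fail. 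Proposition~\ref{Propmain2} is devoted to ruling this out, via iterated resultants whose nonvanishing ($r_{5,3}\not\equiv 0$) is again certified by an explicit coefficient computation relying on $A\neq 0$. Without these verifications the argument is a plan rather than a proof.
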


To prove Theorem \ref{mainsec1}, we first prove the following preliminary results.

\begin{proposition}\label{Propmain1}
 Let $(a,b)\in \AGqs$ such that $A\neq 0$. Then, there are at least $$q^3-3422q^{5/2}-5\cdot 60^{13/3}q^2-9q^2$$ values $m\in \mathbb{F}_{q^6}$ such that $$f_1(m,m^{q},\ldots,m^{q^5})=f_2(m,m^{q},\ldots,m^{q^5})=f_3(m,m^{q},\ldots,m^{q^5})=0.$$
\end{proposition}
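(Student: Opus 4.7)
The plan is to interpret the set of $m\in\mathbb{F}_{q^6}$ solving the system $f_1=f_2=f_3=0$ as the set of $\mathbb{F}_q$-rational points of an explicit algebraic variety $\mathcal{V}$ of dimension $3$ over $\mathbb{F}_q$, and then invoke Theorem~\ref{Th:CafureMatera}. The shape of the target bound is strongly suggestive: the coefficient $3422=(60-1)(60-2)$ together with the leading $q^3$ pin down $\dim\mathcal{V}=3$ and $\deg\mathcal{V}\leq 60$.

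The first step is to construct $\mathcal{V}$. Fix an $\mathbb{F}_q$-basis $\{\omega_0,\dots,\omega_5\}$ of $\mathbb{F}_{q^6}$ and parameterize $m$ via $m=\sum x_i\omega_i$, $x_i\in\mathbb{F}_q$. Substituting $m^{q^j}=\sum x_i\omega_i^{q^j}$ into $f_1,f_2,f_3$ and extracting the right $\mathbb{F}_q$-linear components---or, equivalently, exploiting the built-in Frobenius symmetry of the construction, since the $f_i$ arise from the Galois-invariant identity $x=x^{q^6}$---I expect to obtain exactly three independent polynomial equations in $\mathbb{F}_q[x_0,\ldots,x_5]$ cutting out $\mathcal{V}\subset\mathrm{AG}(6,\mathbb{F}_q)$. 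The bijection between $\mathbb{F}_q$-points of $\mathcal{V}$ and the desired values of $m$ is then automatic.

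Next comes the core geometric input: absolute irreducibility of $\mathcal{V}$, its dimension being $3$, and a degree bound of $60$. The degree bound should follow from a direct Bezout-type computation on the explicit polynomials listed in Section~\ref{Appendice}; the dimension claim reduces to checking that the three equations impose independent codimension-$1$ conditions. Absolute irreducibility is the crux and the main obstacle: my approach would be to isolate the homogeneous component of highest degree of the defining ideal and to show that the corresponding projective variety at infinity is itself absolutely irreducible of the expected dimension; a standard argument then propagates irreducibility to the affine $\mathcal{V}$. This is precisely where the hypothesis $A\neq 0$ should enter: it should translate into a non-vanishing condition on a leading coefficient, preventing $\mathcal{V}$ from splitting into components indexed by some arithmetic degeneracy tied to the Frobenius orbit of $a$.

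With the previous steps in hand, Theorem~\ref{Th:CafureMatera} applied with $r=3$, $\delta=60$ delivers
\[|\mathcal{V}(\mathbb{F}_q)|\geq q^3-3422\,q^{5/2}-5\cdot 60^{13/3}\,q^2.\]
Finally, to match the stated bound I would subtract the contribution of the \emph{exceptional} slopes $m$ with $m^{q+1}+m^q a^q+ma+b^q+b=0$, which by Proposition~\ref{prop1} yield multiple roots of $f(X)$ and so must be excluded. That condition has degree at most $2$ in $(x_0,\ldots,x_5)$, so its trace on $\mathcal{V}$ is a proper subvariety of dimension at most $2$; a Bezout-type estimate bounds its $\mathbb{F}_q$-points by $9q^2$, giving the final $-9q^2$ correction. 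The entire argument hinges on the absolute irreducibility of $\mathcal{V}$, whose verification under the hypothesis $A\neq 0$ is the most delicate point and will require careful manipulation of the lengthy explicit formulas for $f_1,f_2,f_3$.
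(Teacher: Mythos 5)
Your overall framework---realize the solution set as the set of $\mathbb{F}_q$-rational points of a three-dimensional variety over $\mathbb{F}_q$ and apply Theorem \ref{Th:CafureMatera}---is the paper's strategy, but the two steps you yourself flag as delicate are exactly where the proposal has genuine gaps. First, the degree bound: you propose to work with $f_1,f_2,f_3$ directly, but these polynomials (Section \ref{Appendice}) each have degree $5$ in $\mathbf{y}$, so a B\'ezout-type bound gives $125$, not $60$, and the constants $3422=59\cdot 58$ and $5\cdot 60^{13/3}$ cannot come out. The paper instead works with auxiliary polynomials $g_1,g_2,g_3$ of degrees $3,4,5$ (product $60$), proves the point count for the system $g_1=g_2=g_3=0$, and only at the end verifies computationally that the solutions so produced also annihilate $f_1,f_2,f_3$. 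Without this reduction your count does not match the statement.

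Second, and more importantly, the absolute irreducibility. Your plan of analyzing the top-degree homogeneous part and the variety at infinity is not carried out, and nothing suggests it would succeed on polynomials of this size. The paper's argument is entirely different and elementary: $g_1$, $g_2$, $g_3$ are each of degree one in a distinct variable ($x_1$, $x_3$, $x_0$ respectively), with coefficients $G_1,G_2,G_3$ that are nonzero polynomials in $x_2,x_4,x_5$ precisely because $A\neq 0$. On the open set $G_1G_2G_3\neq 0$ one solves for $x_0,x_1,x_3$ rationally in terms of $(x_2,x_4,x_5)$, so the variety contains a component birational to affine $3$-space, hence absolutely irreducible of dimension $3$. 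This also shows that you have misattributed the $-9q^2$ term: it counts the at most $9q^2$ triples $(x_2,x_4,x_5)$ with $G_1G_2G_3=0$ (a polynomial of degree at most $9$), not the exceptional slopes with $m^{q+1}+m^qa^q+ma+b^q+b=0$; those are excluded separately, contributing only the $-(q+1)$ term in Theorem \ref{ultimosez1}. Finally, the passage from equations over $\mathbb{F}_{q^6}$ in $(m,m^q,\ldots,m^{q^5})$ to an $\mathbb{F}_q$-rational variety is done via a normal-basis change of coordinates together with a Frobenius-invariance check on the ideal, which your sketch of ``extracting the right $\mathbb{F}_q$-linear components'' leaves unjustified.
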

\begin{proof}
Consider the polynomials $g_1({\bf y}), g_2({\bf y}), g_3({\bf y})$ as in Section \ref{Appendice}.

First, we will prove that there are $O(q^3)$ values of $m\in \mathbb{F}_{q^6}$ that satisfy the three equations
\begin{equation}\label{Eq:g}g_1(m,m^q,\ldots,m^{q^5})=g_2(m,m^q,\ldots,m^{q^5})=g_3(m,m^q,\ldots,m^{q^5})=0.\end{equation}

Indeed, let $\mathcal{W}$ be the variety of $\mathrm{PG}(6,\Fqs)$ defined by the affine equations
\begin{equation*}
    \begin{cases}
    g_1(x_0,\ldots,x_5)=0\\
    g_2(x_0,\ldots,x_5)=0\\
    g_3(x_0,\ldots,x_5)=0.\\
    \end{cases}
\end{equation*}

This variety contains an absolutely irreducible component of dimension $3$. This follows from the fact that $g_1$, $g_2$, $g_3$ are of degree one in $x_1$, $x_3$, $x_0$ respectively.
 Indeed, let $G_1$ be the coefficient of $x_1$ in $g_1$, $G_2$ be the coefficient of $x_3$ in $g_2$, and $G_3$ be the coefficient of $x_0$ in $g_3$. Then, $G_1,G_2,G_3\in \mathbb{F}_{q^6}[x_0,x_1,x_2,x_3,x_4,x_5]$ are not the zero polynomial. To see this, observe that the coefficient of $x_2x_4x_5^2$ in $G_3$ and of $x_2x_4x_5^2$ in $G_2$ is $A$, which is non-zero by assumption; on the other hand, the coefficient of $x_2x_4$ in $G_1$ is $a^{q^2}-a^{q^4}$, and hence it is non-zero since $a^{q^4}-a^{q^2}=0$ would imply $A=0$, a contradiction. Also, $G_1,G_2,G_3$ have degrees at most $2$, $3$, and $4$, respectively, whence $G_1G_2G_3$ is a polynomial of degree at most $9$ depending only on $x_2,x_4,x_5$. This means that there are at most $9q^2$ triples $(\overline{x}_2,\overline{x}_4,\overline{x}_5)\in \mathbb{F}_{q^6}^3$ such that $$G_1(\overline{x}_2,\overline{x}_4,\overline{x}_5)G_2(\overline{x}_2,\overline{x}_4,\overline{x}_5)G_3(\overline{x}_2,\overline{x}_4,\overline{x}_5)=0.$$

Now, let $\{\xi,\ldots,\xi^{q^5}\}$ be a normal basis of $\mathbb{F}_{q^6}$ over $\mathbb{F}_q$, and $\eta$ be the change of variables given by
$$y_0=\sum_{i=0}^{5} \xi^{q^i}x_i, \quad  y_1=\sum_{i=0}^{5} \xi^{q^{i+1}}x_i, \quad \ldots, \quad y_5=\sum_{i=0}^{5} \xi^{q^{i+5}}x_i.$$
Also, denote $\overline{g}_i(x_0,\ldots,x_5)=g_i\circ \eta$. Then, the ideal  $\mathcal{I}=\langle \overline{g}_1,\overline{g}_2,\overline{g}_3\rangle$ is fixed by the Frobenius automorphism
$$ \sum_{(i_0,\ldots,i_5)} \alpha_{(i_0,\ldots,i_5)}x_0^{i_0}x_1^{i_1}x_2^{i_2}x_3^{i_3}x_4^{i_4}x_5^{i_5}\mapsto  \sum_{(i_0,\ldots,i_5)} \alpha_{(i_0,\ldots,i_5)}^q x_0^{i_0}x_1^{i_1}x_2^{i_2}x_3^{i_3}x_4^{i_4}x_5^{i_5},$$
and hence it is an $\mathbb{F}_q$-ideal of $\mathbb{F}_{q^6}[x_0,\ldots,x_5]$. This fact has been directly checked with MAGMA. Therefore, the variety $\mathcal{V}$ of $\mathrm{PG}(6,\Fqs)$ defined by the affine equations
\begin{equation*}
    \begin{cases}
    \overline{g}_1(x_0,\ldots,x_5)=0\\
    \overline{g}_2(x_0,\ldots,x_5)=0\\
    \overline{g}_3(x_0,\ldots,x_5)=0\\
    \end{cases}
\end{equation*}
is $\mathbb{F}_{q}$-rational and it is projectively equivalent to $\mathcal{W}$. 
 
Therefore, $\mathcal{V}$ contains an $\mathbb{F}_{q}$-rational absolutely irreducible algebraic variety of dimension 3 whose degree is at most $\deg(\overline{g}_1)\cdot\deg(\overline{g}_2)\cdot\deg(\overline{g}_3)=60$, and by Theorem \ref{Th:CafureMatera} there are at least
$$
q^3-59\cdot 58 q^{5/2}-5\cdot 60^{13/3}q^2-9q^2
$$
tuples $(\overline{x}_0,\ldots,\overline{x}_5)\in \mathbb{F}_{q}^6$  such that $\overline{g_{i}}(\overline{x}_0,\ldots,\overline{x}_5)=0$, $i=1,2,3$ and $$G_1(\overline{x}_0,\ldots,\overline{x}_5)G_2(\overline{x}_0,\ldots,\overline{x}_5)G_3(\overline{x}_0,\ldots,\overline{x}_5)\neq 0.$$
 By definition of the polynomials $\overline{g_{i}}$, all such tuples correspond to tuples $(\overline{m},\ldots,\overline{m}^{q^5})\in \mathbb{F}_{q^6}^{6}$  satisfying \eqref{Eq:g}. To conclude the proof, it is sufficient to check that such tuples $\overline{m}$  also satisfy $f_1(\overline{m})=f_2(\overline{m})=f_3(\overline{m})=0$. 
 \begin{comment}
 ZZZ := SequenceToSet(Coefficients(Numerator(x6-X0),X0));
ZZZ := {Resultant(pol,rrr,m1) : pol in ZZZ};
ZZZ := {Resultant(pol,rrr2,m3) : pol in ZZZ};
ZZZ := {Resultant(pol, B0-B1+B2-B3+B4-B5,B5): pol in ZZZ};
{IsDivisibleBy(pol,rrr3) : pol in ZZZ};
\end{comment}
\end{proof}

\begin{proposition}\label{Propmain2}
Let $(a,b)\in \AGqs$ such that $A\neq 0$. Then, there are at most $$q^2+1499\cdot 1500q^{1/2}+5\cdot 1500^{13/3}q$$ values $m\in \mathbb{F}_{q^6}$ such that $f_i(m,m^{q},\ldots,m^{q^5})=0$ for each $i\in\{1,\ldots,5\}$.
\end{proposition}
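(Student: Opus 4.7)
The plan is to adapt the strategy of the proof of Proposition~\ref{Propmain1} to all five polynomials $f_{1},\dots,f_{5}$. The two extra equations $f_{4}=0$ and $f_{5}=0$ should drop the dimension of the associated variety from $3$ (as in Proposition~\ref{Propmain1}) down to $2$, which is exactly what produces the $q^{2}$ main term in the claimed bound.

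Concretely, I would first consider the variety $\mathcal{W}'\subset \mathrm{AG}(6,\Fqs)$ cut out by $f_{i}(x_{0},\dots,x_{5})=0$ for $i=1,\dots,5$. Applying the same change of variables $\eta$ induced by a normal basis $\{\xi,\xi^{q},\dots,\xi^{q^{5}}\}$ of $\Fqs/\mathbb{F}_{q}$, one obtains a variety $\mathcal{V}'$ whose defining ideal $\langle\overline{f}_{1},\dots,\overline{f}_{5}\rangle$ can be verified (computationally in MAGMA, in exactly the manner used in Proposition~\ref{Propmain1}) to be invariant under the Frobenius automorphism. Hence $\mathcal{V}'$ is $\mathbb{F}_{q}$-rational, and its $\mathbb{F}_{q}$-rational points are in bijection with the $m\in\Fqs$ we want to count. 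Reading off the degrees of $f_{1},\dots,f_{5}$ from the expressions in Section~\ref{Appendice} and applying Bézout's bound gives $\deg(\mathcal{V}')\le 3\cdot 4\cdot 5\cdot 5\cdot 5=1500$.

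The crucial and most delicate step is to establish that $\dim(\mathcal{V}')\le 2$. This does not follow from the naive codimension count, since dependencies among the $f_{i}$ could inflate the dimension; it must be checked directly. The natural route is an explicit elimination argument along the lines sketched in the commented MAGMA block at the end of the proof of Proposition~\ref{Propmain1}: one successively eliminates the variables $m$, $m^{q}$, $m^{q^{3}}$, $m^{q^{5}}$ by taking resultants of the generators of $\langle\overline{f}_{1},\dots,\overline{f}_{5}\rangle$ against the relations coming from $f_{4}=f_{5}=0$ and the Frobenius-trace identity $\gamma-\gamma^{q}+\gamma^{q^{2}}-\gamma^{q^{3}}+\gamma^{q^{4}}-\gamma^{q^{5}}=0$, and then verifies that the image of $\mathcal{V}'$ in any projection onto three of the variables is a proper subvariety. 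This is where essentially all of the algebraic work concentrates.

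Once the dimension bound $\dim(\mathcal{V}')\le 2$ and the degree bound $\deg(\mathcal{V}')\le 1500$ are in hand, the stated estimate follows by applying Theorem~\ref{Th:CafureMatera} to each top-dimensional absolutely irreducible component of $\mathcal{V}'$: components of dimension strictly less than $2$ contribute only to lower-order error terms, while summing over the top-dimensional components with total degree $\le 1500$ gives the bound $q^{2}+1499\cdot 1500\,q^{1/2}+5\cdot 1500^{13/3}\,q$. The main obstacle is not the Cafure--Matera application but precisely the computer-algebraic verification of $\dim(\mathcal{V}')\le 2$: this is where the additional independence of $f_{4}=0$ and $f_{5}=0$ with respect to $f_{1}=f_{2}=f_{3}=0$ must be genuinely established, as opposed to merely assumed from a codimension count.
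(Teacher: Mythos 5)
Your proposal follows essentially the same route as the paper: there, too, the bound is obtained by adjoining $\overline{f}_4=\overline{f}_5=0$ to the dimension-$3$ system of Proposition \ref{Propmain1} (with the lower-degree $\overline{g}_1,\overline{g}_2,\overline{g}_3$ standing in for $f_1,f_2,f_3$), showing by exactly the iterated-resultant elimination you describe that the resulting variety has dimension at most $2$ and degree at most $1500$, and then invoking Theorem \ref{Th:CafureMatera}. The one thing your plan necessarily leaves open is what the computation actually reveals: the resultant chain applied to $\overline{f}_4$ collapses to the zero polynomial, so the entire dimension drop rests on $\overline{f}_5$ alone, whose final resultant $r_{5,3}$ must be (and is) shown nonzero by exhibiting a coefficient that cannot vanish when $A\neq 0$.
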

\begin{proof}
    We argue as in the proof of Proposition \ref{Propmain1}, and we adopt the same notation. So, let $\overline{f}_i(x_0,\ldots,x_5)=f_i\circ \eta$, $i=4,5$. We claim that the variety $\mathcal{W}$
\begin{equation*}
    \begin{cases}
    \overline{g}_1(x_0,\ldots,x_5)=0\\
    \overline{g}_2(x_0,\ldots,x_5)=0\\
    \overline{g}_3(x_0,\ldots,x_5)=0\\
    \overline{f}_4(x_0,\ldots,x_5)=0\\
    \overline{f}_5(x_0,\ldots,x_5)=0\\
    \end{cases}
\end{equation*}
is properly contained in $\mathcal{V}$. To do this, for $i=4,5$, let
\begin{eqnarray*}
    r_{i,1}&:=&\mathrm{Res}(\overline{f}_i,\overline{g}_1,x_1),\\
    r_{i,2}&:=&\mathrm{Res}(r_{i,1},\overline{g}_2,x_3),\\
    r_{i,3}&:=&\mathrm{Res}(r_{i,2},\overline{g}_3,x_0).
\end{eqnarray*}
Then, a MAGMA computation shows that $r_{4,3}$ is the zero polynomial, whereas  $r_{5,3}$ is not, and it factorizes as 
\begin{eqnarray*}
&&(x_5a^q -x_5a^{q^5} + \gamma^q - \gamma^{q^2} + \gamma^{q^3} - \gamma^{q^4})\cdot(x_4x_5 + x_4a^{q^4} + x_5a^{q^5} + \gamma^{q^4})\cdot\\&& (x_2x_4a^{q^2} - x_2x_4a^{q^4} + x_2a^{q^2+q}-x_2\gamma^q + x_2\gamma^{q^2} - x_2\gamma^{q^3} - x_4a^{q^4+q} + x_4\gamma^q + a^q\gamma^{q^2} - a^q\gamma^{q^3})\cdot\\&& (x_2x_5+x_2a^{q^2}+x_5a^{q^5}+ \gamma^{q^2} - \gamma^{q^3} + \gamma^{q^4})\cdot r_1(x_0,x_1,x_2,x_3,x_4,x_5)\cdot r_2(x_0,x_1,x_2,x_3,x_4,x_5)\cdot \\&&r_3(x_0,x_1,x_2,x_3,x_4,x_5).
\end{eqnarray*}
The explicit expression of $r_1$, $r_2$ and $r_3$ are given in Section \ref{Appendice}.
To see that $r_{5,3}$ is not the zero polynomial, we observe that the coefficient of $
x_2^5x_4^5x_5^7$ in $r_{5,3}$ equals
\begin{equation}\label{coefficiente}
A(a-a^{q^2})^{q^2}(a-a^{q^4})^q(a^{q^5+1}+a^{q^4+q^3}-a^{q^5+q^4}-\gamma+\gamma^q-\gamma^{q^2})(a^{q+1}+a^{q^3+q^2}-a^{q^2+q}-\gamma+\gamma^q-\gamma^{q^2}).
\end{equation}
The first three factors in \eqref{coefficiente} cannot vanish, otherwise $A=0$, a contradiction. On the other hand, assume $a^{q^5+1}+a^{q^4+q^3}-a^{q^5+q^4}-\gamma+\gamma^q-\gamma^{q^2}=0$. Since $-\gamma+\gamma^q-\gamma^{q^2}=-\gamma^{q^3}+\gamma^{q^4}+\gamma^{q^5}$, we have 
$$
a^{q+1}+a^{q^3+q^2}-a^{q^2+q}-\gamma+\gamma^q-\gamma^{q^2}=(a^{q^5+1}+a^{q^4+q^3}-a^{q^5+q^4}-\gamma+\gamma^q-\gamma^{q^2})^{q^3}=0.
$$
Taking the sum of $a^{q^5+1}+a^{q^4+q^3}-a^{q^5+q^4}-\gamma+\gamma^q-\gamma^{q^2}=0$ and $a^{q+1}+a^{q^3+q^2}-a^{q^2+q}-\gamma+\gamma^q-\gamma^{q^2}=0$, we obtain $A=0$, a contradiction. Therefore, $r_{5,3}$ is not the zero polynomial, and hence $\mathcal{W}$ is a variety of dimension at most $2$ properly contained in $\mathcal{V}$. The claim follows arguing as in the proof of Proposition \ref{Propmain1}.
\end{proof}

Combining Propositions \ref{Propmain1}, \ref{Propmain2}, and requiring that $m\in\Fqs$ satisfies $m^{q+1}+m^qa^q+ma+b^q+b\neq 0$, we obtain the following result.

\begin{theorem}\label{ultimosez1}
Through any point $(a,b)\in \AGqs$ such that $A\neq 0$ there are at least
$$
q^3-3422q^{5/2}-5\cdot 60^{13/3}q^2-9q^2-(q^2+1499\cdot 1500q^{1/2}+5\cdot 1500^{13/3}q)-(q+1)
$$
$\Fqs$-rational $(q+1)$-secants to $\mathcal{H}_q(\Fqs)$.
\end{theorem}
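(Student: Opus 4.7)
The plan is to assemble Theorem \ref{ultimosez1} as a straightforward counting argument built on top of Propositions \ref{prop2}, \ref{Propmain1}, and \ref{Propmain2}. By Proposition \ref{prop2}, an $\Fqs$-rational $(q+1)$-secant through $P=(a,b)$ is in bijection with a slope $m\in \Fqs$ satisfying the three conditions
\begin{enumerate}[(a)]
\item $f_1(m,m^q,\ldots,m^{q^5})=f_2(m,m^q,\ldots,m^{q^5})=f_3(m,m^q,\ldots,m^{q^5})=0$,
\item $\bigl(f_4(m,m^q,\ldots,m^{q^5}),\,f_5(m,m^q,\ldots,m^{q^5})\bigr)\neq (0,0)$,
\item $m^{q+1}+m^qa^q+ma+b^q+b\neq 0$,
\end{enumerate}
so it suffices to bound from below the number of $m\in \Fqs$ satisfying (a), (b), and (c) simultaneously.

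The first step is to apply Proposition \ref{Propmain1}, which directly furnishes a lower bound of $q^3-3422q^{5/2}-5\cdot 60^{13/3}q^2-9q^2$ on the number of $m\in\Fqs$ fulfilling (a). Next I subtract an upper bound for the number of slopes that satisfy (a) but violate (b), i.e.\ those $m$ with $f_i(m,\ldots)=0$ for every $i\in\{1,\ldots,5\}$: Proposition \ref{Propmain2} gives at most $q^2+1499\cdot 1500q^{1/2}+5\cdot 1500^{13/3}q$ such slopes.

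Finally I need to discard the slopes that violate (c). The quantity $m^{q+1}+m^qa^q+ma+b^q+b$ is, as a polynomial in $m$ with coefficients in $\Fqs$ depending only on $a,b$, of degree $q+1$; hence it vanishes on at most $q+1$ elements of $\Fqs$. Subtracting this additional $(q+1)$ from the running total yields exactly the claimed estimate
\[
q^3-3422q^{5/2}-5\cdot 60^{13/3}q^2-9q^2-\bigl(q^2+1499\cdot 1500q^{1/2}+5\cdot 1500^{13/3}q\bigr)-(q+1).
\]
Since distinct slopes correspond to distinct lines through $P$, this is simultaneously a lower bound on the number of $\Fqs$-rational $(q+1)$-secants to $\mathcal{H}_q(\Fqs)$ passing through $P$, completing the proof.

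The argument is essentially bookkeeping and carries no serious obstacle of its own: all of the technical work, notably the absolute irreducibility of the relevant $3$-dimensional component and the proper containment $\mathcal{W}\subsetneq \mathcal{V}$, has already been absorbed into Propositions \ref{Propmain1} and \ref{Propmain2}. The only point requiring a touch of care is ensuring that the bound in (c) is linear in $q$, so that (a) dominates asymptotically and the final expression is positive (and of order $q^3$) once $q$ is large enough.
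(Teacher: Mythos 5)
Your proposal is correct and coincides with the paper's own (one-line) proof: the authors likewise obtain the bound by combining Proposition \ref{Propmain1} (lower bound for condition (a)), Proposition \ref{Propmain2} (upper bound for the slopes violating (b)), and the observation that $m^{q+1}+m^qa^q+ma+b^q+b$, being of degree $q+1$ in $m$, excludes at most $q+1$ further slopes. The only detail worth making explicit, which you do, is that distinct admissible slopes $m\in\Fqs$ yield distinct lines through $(a,b)$, so the count of slopes is indeed a lower bound for the number of $(q+1)$-secants.
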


Theorem \ref{mainsec1} follows from Theorem \ref{ultimosez1}.
\subsection{The case $A=0$}

Now we consider the case $A=0$. In this section, we restrict our search to $m\in \mathbb{F}_{q^2}$ showing the existence of a suitable choice of such $m$ for which the polynomial $f(x)$ has $q+1$ distinct roots in $\mathbb{F}_{q^6}$. So, our aim is to prove the following.

\begin{theorem}\label{mainsec2}
Through any point $(a,b)\in \AGqs$ such that $A=0$ there exists at least one $\Fqs$-rational $(q+1)$-secant to $\mathcal{H}_q(\Fqs)$ of affine equation $Y=m(X-a)+b$, with $m\in \mathbb{F}_{q^2}$.
\end{theorem}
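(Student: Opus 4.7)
The plan is to use the restriction $m \in \mathbb{F}_{q^2}$ to drastically simplify the polynomial $f(X)$, and then to apply Theorem \ref{Th:CafureMatera} to an appropriate algebraic variety, in the spirit of Proposition \ref{Propmain1}.

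\textbf{Step 1 (Reduction).} Since $m^{q^2} = m$ when $m \in \mathbb{F}_{q^2}$, the substitution $X = Z + m^q$ yields, by a direct computation,
$$
f(Z + m^q) \;=\; Z^{q+1} - c'(m), \qquad c'(m) := m^{q+1} + ma + m^q a^q + \gamma.
$$
By Propositions \ref{prop1} and \ref{propK1}, the line $Y = m(X - a) + b$ is therefore a $(q+1)$-secant to $\cH_q(\Fqs)$ if and only if $c'(m) \neq 0$ and $c'(m) = z^{q+1}$ for some $z \in \Fqs^*$; equivalently, $c'(m)$ is a nonzero $(q+1)$-th power in $\Fqs^*$.

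\textbf{Step 2 (Variety).} Fix an $\mathbb{F}_q$-basis $\{1,\omega\}$ of $\mathbb{F}_{q^2}$ and a normal basis $\{\xi^{q^i}\}_{0\le i\le 5}$ of $\Fqs$ over $\mathbb{F}_q$. Writing $m = x_0 + \omega x_1$ and $z = \sum_{i=0}^{5} u_i\,\xi^{q^i}$ with $x_0,x_1,u_0,\dots,u_5 \in \mathbb{F}_q$, the equation $z\cdot z^q = c'(m)$ expands in this basis into a system of six $\mathbb{F}_q$-polynomial equations in the eight variables $x_0,x_1,u_0,\dots,u_5$, each of total degree at most $2$. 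These cut out an affine variety $\mathcal{W} \subset \AG(8,\mathbb{F}_q)$ of expected dimension $2$ and degree at most $2^6 = 64$, whose $\mathbb{F}_q$-rational points correspond bijectively to pairs $(m,z) \in \mathbb{F}_{q^2}\times\Fqs$ satisfying $z^{q+1} = c'(m)$.

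\textbf{Step 3 (Count and conclusion).} Following the scheme of Proposition \ref{Propmain1}, I would argue that $\mathcal{W}$ contains an $\mathbb{F}_q$-rational absolutely irreducible component of dimension $2$. Theorem \ref{Th:CafureMatera} then yields $|\mathcal{W}(\mathbb{F}_q)| \geq q^2 - O(q^{3/2})$. Each $m \in \mathbb{F}_{q^2}$ with $c'(m)$ a nonzero $(q+1)$-th power contributes exactly $q+1$ solutions $z$, while the degenerate loci (in particular the Proposition \ref{prop1} locus $m^{q+1} + m^q a^q + ma + \gamma = 0$) add only a bounded contribution. For $q$ large enough, this produces at least one valid $m \in \mathbb{F}_{q^2}$.

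The hard part will be the absolute-irreducibility verification in Step 3, which---as in Section \ref{Appendice} and the proof of Proposition \ref{Propmain1}---would require a careful analysis of the defining quadrics, supported by a MAGMA computation of suitable resultants to exhibit a non-zero leading coefficient. This is where the hypothesis $A = 0$ is expected to enter essentially, ruling out degenerations that would otherwise drop the dimension of the irreducible components below $2$.
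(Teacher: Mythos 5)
Your Step 1 is correct and is a genuinely cleaner route into the problem than the paper's: for $m\in\mathbb{F}_{q^2}$ the translation $X=Z+m^q$ does reduce $f$ to $Z^{q+1}-c'(m)$ with $c'(m)=m^{q+1}+ma+m^qa^q+\gamma$, so the $(q+1)$-secant condition becomes ``$c'(m)$ is a nonzero $(q+1)$-th power in $\mathbb{F}_{q^6}^*$'', equivalently $\mathrm{N}_{\mathbb{F}_{q^6}/\mathbb{F}_{q^2}}(c'(m))\in\mathbb{F}_q^*$. This is the same condition the paper encodes as $g(m,m^q)=0$ in Proposition \ref{prop2bis}, obtained there by the more laborious iteration of Frobenius. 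Up to this point your argument is sound, and the bookkeeping in Step 3 (each admissible $m$ contributes $q+1$ values of $z$, the locus $c'(m)=0$ contributes at most one $m$) would close the proof \emph{if} the irreducibility claim held.

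The gap is that the irreducibility claim in Step 3 is asserted, not proved, and it is precisely where the entire content of the theorem lies; moreover it cannot be expected to follow ``as in Proposition \ref{Propmain1}''. That proof exhibits a distinguished component by eliminating variables that occur \emph{linearly} in the defining equations; in your system the six quadrics coming from $z\cdot z^q=c'(m)$ are full quadratic forms in the coordinates $u_0,\dots,u_5$, so that device is unavailable. In fact your variety $\mathcal{W}$ is (on the open locus where the $c_i$ do not vanish) a trivial $\mathbb{G}_m$-fibration over the Weil restriction of the curve $\mathrm{N}_{\mathbb{F}_{q^6}/\mathbb{F}_{q^2}}(c'(m))^q=\mathrm{N}_{\mathbb{F}_{q^6}/\mathbb{F}_{q^2}}(c'(m))$, so its absolutely irreducible components biject with those of that plane curve, which has degree up to $6$ in $(m,m^q)$; nothing is gained by passing to the $8$-dimensional ambient space, and the even degree rules out the parity argument the paper uses. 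The paper instead works with an explicit degree-$3$ model $g(y_0,y_1)=0$: when $C\neq 0$ the odd degree together with the involution $\psi$ (swap $y_0,y_1$ and apply Frobenius to coefficients) forces an $\mathbb{F}_q$-rational absolutely irreducible component, and Hasse--Weil finishes; when $C=0$ (a case that genuinely occurs under $A=0$) the cubic \emph{degenerates and factors}, and one must exhibit an explicit linear factor $g_2$ producing $q$ values of $m$ by hand. Your proposal has no mechanism for this degeneration and never uses $A=0$ concretely, so as written the proof does not go through; you would need to identify the low-degree base curve, prove its irreducibility in the generic sub-case, and treat the degenerate sub-case by explicit factorization, which is essentially the paper's proof.
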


First, we point out that for points $(a,b)$ lying in $\mathrm{AG}(2,q^2)$, Theorem \ref{mainsec2} is well-known, see \cite[Lemma 7.20]{MR1612570}. For this reason, henceforth we will restrict our attention to the case $(a,b)\not\in\mathrm{AG}(2,q^2)$.


We follow the same approach of the previous section, with the additional assumption that $m\in \mathbb{F}_{q^2}$. In this case, the polynomial $h(X):=X_{q^6}-X$ reads 
\begin{equation}\label{Eq:h}
    h(X)=\frac{g(m,m^q)(X-m^q)}{(m^{q+1}+ma+m^qa^{q}+b+b^{q})^{q^4+q^2+1}},
\end{equation}
where 
\begin{equation}\label{Eq:quartica}
g(y_0,y_1)=-C^qy_0y_1^2-D^qy_1^2+Cy_0^2y_1+Ey_0y_1-F^qy_1+Dy_0^2+Fy_0+B.
\end{equation}
and
\begin{eqnarray*}
B&=&(b^{q^3+q+1}+b^{q^4+q+1}+b^{q^3+q^2+1}-b^{q^5+q^2+1}-b^{q^4+q^3+1}-b^{q^5+q^3+1}-b^{q^4+q^2+q}-\\&&-b^{q^5+q^2+q}-b^{q^4+q^3+q}+b^{q^4+q^4+q} +b^{q^5+q^3+q^2}+b^{q^5+q^4+q^2}),\\
C&=&a^{q^4+q+1}+a^{q^3+q^2+1}-a^{q^5+q^2+1}- a^{q^4+q^3+1}+ab^q-ab^{q^5}-a^{q^4+q^2+q}+a^{q^5+q^4+q^2}-\\&&-a^{q^2}b^q+a^{q^2}b^{q^3}-a^{q^4}b^{q^3}+a^{q^4}b^{q^5}\\
D&=&a^{q^2+1}b^{q^3}-a^{q^2+1}b^{q^5}+a^{q^4+1}b^{q}-a^{q^4+1}b^{q^3}-a^{q^4+q^2}b^{q}+a^{q^4+q^2}b^{q^5}\\
E&=&a^{q+1}b^{q^3}+a^{q+1}b^{q^4}+a^{q^3+1}b^{q}+ a^{q^3+1}b^{q^2}-a^{q^3+1}b^{q^4}-a^{q^3+1}b^{q^5}- a^{q^5+1}b^{q^2}-\\&&-a^{q^5+1}b^{q^3}-a^{q^2+q}b^{q^4}-a^{q^2+q}b^{q^5} +a^{q^4+q}b-a^{q^4+q}b^{q^2}-a^{q^4+q}b^{q^3}+a^{q^4+q}b^{q^5}+a^{q^3+q^2}b+\\&& +a^{q^3+q^2}b^{q^5}-a^{q^5+q^2}b-a^{q^5+q^2}b^{q}+a^{q^5+q^2}b^{q^3}+a^{q^5+q^2}b^{q^4}-a^{q^4+q^3}b- a^{q^4+q^3}b^{q}+\\&& +a^{q^5+q^4}b^{q}+a^{q^5+q^4}b^{q^2}+b^{q+1}-b^{q^5+1}-b^{q^2+q} +b^{q^3+q^2}-b^{q^4+q^3}+b^{q^5+q^4},\\
F&=&ab^{q^3+q}+ab^{q^4+q}+ab^{q^3+q^2}-ab^{q^5+q^2}-ab^{q^4+q^3}-ab^{q^5+q^3}+a^{q^2}b^{q^3+1}-a^{q^2}b^{q^5+1}-\\&&-a^{q^2}b^{q^4+q}-a^{q^2}b^{q^5+q}+a^{q^2}b^{q^5+q^3}+a^{q^2}b^{q^5+q^4}+a^{q^4}b^{q+1}-a^{q^4}b^{q^3+1}-a^{q^4}b^{q^2+q}-\\&&-a^{q^4}b^{q^3+q}+a^{q^4}b^{q^5+q}+a^{q^4}b^{q^5+q^2}
\end{eqnarray*}

\begin{comment}
Since $h(x)$ is the zero polynomial, we must have $g(a,b,m)=0$. Viceversa, if $g(a,b,m)=0$ the roots of $f(X)$ all lie in $\Fqs$. So, because of the correspondence between the intersections of $\cH$ and $\ell$ in $\PGqs$ and the roots of $f(X)$ in $\Fqs$, we have the following.
\end{comment}

Also, observe that if $m^{q+1}+m^qa^q+ma+b^q+b=0$ and $m\in \mathbb{F}_{q^2}$, then either $(a,b)\in \mathrm{AG}(2,\mathbb{F}_{q^2})$ or $m=-(b^{q^2}-b)/(a^{q^2}-a)$. Therefore, from Proposition \ref{prop2} we have the following.

\begin{proposition}\label{prop2bis}
Let $(a,b)\in \AGqs\setminus \mathrm{AG}(2,\mathbb{F}_{q^2})$ and $m\in \mathbb{F}_{q^2}$ such that $m\neq -(b^{q^2}-b)/(a^{q^2}-a)$. Then, the line $Y=m(X-a)+b$ intersects $\cH_q$ in exactly $q+1$ distinct points in $\PGqs$ if and only if $g(m,m^q)=0$. 
\end{proposition}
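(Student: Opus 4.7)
The plan is to specialize the general criterion of Proposition \ref{prop2} to the case $m\in\mathbb{F}_{q^2}$, exploiting the drastic collapse of $h(X)$ recorded in equation (\ref{Eq:h}).

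First I would verify the nondegeneracy condition $\phi(m):=m^{q+1}+m^qa^q+ma+b^q+b\neq 0$, so that Proposition \ref{prop1} guarantees that $f(X)$ has exactly $q+1$ distinct roots in $\overline{\mathbb{F}}_q$. Supposing $\phi(m)=0$, I would raise to the $q$-th power and use $m^{q^2}=m$ (which holds since $m\in\mathbb{F}_{q^2}$) to obtain $m^{q+1}+ma^{q^2}+m^qa^q+b^q+b^{q^2}=0$; subtracting the original relation then gives $m(a-a^{q^2})+(b-b^{q^2})=0$. This forces either $(a,b)\in \mathrm{AG}(2,\mathbb{F}_{q^2})$ or $m=-(b^{q^2}-b)/(a^{q^2}-a)$, both contradicting the hypotheses.

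Next I would invoke identity (\ref{Eq:h}): since $\phi(m)\neq 0$, the denominator is nonzero and $h(X)$, viewed as a rational function of $X$, equals $g(m,m^q)(X-m^q)/\phi(m)^{q^4+q^2+1}$. A direct substitution using $m^{q^2}=m$ shows $f(m^q)=-\phi(m)\neq 0$, so $m^q$ is never itself a root of $f$. By construction of $h$, a root $x\in\overline{\mathbb{F}}_q$ of $f$ lies in $\mathbb{F}_{q^6}$ if and only if $h(x)=0$.

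Combining these, if $g(m,m^q)\neq 0$, then $h(X)=c(X-m^q)$ with $c\neq 0$ vanishes only at $m^q$, which is not a root of $f$; hence no root of $f$ lies in $\mathbb{F}_{q^6}$, and by Proposition \ref{propK1} the line $\ell$ is not a $(q+1)$-secant. Conversely, if $g(m,m^q)=0$, then $h(X)\equiv 0$, so every one of the $q+1$ distinct roots of $f$ satisfies $x^{q^6}=x$ and lies in $\mathbb{F}_{q^6}$; Proposition \ref{propK1} then yields that $\ell$ is a $(q+1)$-secant. The main (and essentially only) obstacle is verifying the MAGMA-computed identity (\ref{Eq:h}) by hand; I would treat it as a computational black box, after which the remaining steps reduce to the elementary manipulations above.
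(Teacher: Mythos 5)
Your proposal is correct and follows essentially the same route as the paper: the paper's own justification consists precisely of the remark that $m^{q+1}+m^qa^q+ma+b^q+b=0$ with $m\in\mathbb{F}_{q^2}$ forces $(a,b)\in\mathrm{AG}(2,\mathbb{F}_{q^2})$ or $m=-(b^{q^2}-b)/(a^{q^2}-a)$ (your Frobenius-subtraction computation), followed by an appeal to Proposition \ref{prop2} applied to the specialized identity \eqref{Eq:h}. Your only variation is cosmetic: you read the conclusion off \eqref{Eq:h} and Proposition \ref{propK1} directly, adding the (correct and slightly more explicit) observation that $f(m^q)=-\phi(m)\neq 0$ disposes of the zero of the linear factor $X-m^q$, whereas the paper absorbs this into the $f_1=f_2=f_3=0$, $(f_4,f_5)\neq(0,0)$ criterion.
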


By Proposition \ref{prop2bis}, in order to prove Theorem \ref{mainsec2}, for each $(a,b)\in \AGqs\setminus \mathrm{AG}(2,\mathbb{F}_{q^2})$ such that $A=0$ we must find a suitable $m\in \mathbb{F}_{q^2}$ such that $g(m,m^q)=0$ and $m\neq -(b^{q^2}-b)/(a^{q^2}-a)$.

\begin{proposition}
Let $(a,b)\in \AGqs\setminus \mathrm{AG}(2,\mathbb{F}_{q^2})$ such that $A= 0$. Then, there is at least one value $m\in \mathbb{F}_{q^2}$ such that $g(m,m^q)=0$ and $m\neq -(b^{q^2}-b)/(a^{q^2}-a)$.
\end{proposition}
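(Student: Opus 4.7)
The plan is to find $m \in \mathbb{F}_{q^2}$ satisfying $g(m, m^q) = 0$ and $m \neq m_0 := -(b^{q^2}-b)/(a^{q^2}-a)$ by adapting the algebraic-variety strategy used in the $A \neq 0$ case. The key observation is that for $m \in \mathbb{F}_{q^2}$ one has $t := m^{q+1} \in \mathbb{F}_q$, so $G(m) := g(m, m^q)$ regroups as
\[
G(m) = \bigl[Dm^2 + (Ct+F)m + (Et+B)\bigr] - \bigl[D^q m^{2q} + (Ct+F)^q m^q\bigr],
\]
exhibiting $G$ as a difference of a polynomial and its Frobenius $q$-twist. Since $G(m) \in \Fqs$, the condition $G(m) = 0$ may be decomposed via the trace $\mathrm{Tr}_{\Fqs/\mathbb{F}_{q^2}}$ and its twisted variants into a system of $\mathbb{F}_{q^2}$-polynomial equations in $m$ of degree much smaller than $2q+1$. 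For instance, applying $(\cdot)^q$ to $G(m) = 0$ (using $m^{q^2} = m$) and adding yields the polynomial relation
\[
(C - C^{q^2}) m^{q+2} + (E + E^q) m^{q+1} + (D - D^{q^2}) m^2 + (F - F^{q^2}) m + (B + B^q) = 0,
\]
of degree $q+2$, a necessary condition for $G(m) = 0$.

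Next, following the template of Proposition \ref{Propmain1}, I would introduce the affine variety $\mathcal{V}' \subset \mathrm{AG}(2, \Fqs)$ cut out by $g(y_0, y_1) = 0$ and $y_1 = y_0^q$, together with the subfield constraint $y_0^{q^2} = y_0$, and then apply a normal-basis change of variables for $\mathbb{F}_{q^2}/\mathbb{F}_q$ to realize it as an $\mathbb{F}_q$-rational affine variety $\mathcal{V}$ of bounded dimension and degree. Exhibiting an absolutely irreducible $\mathbb{F}_q$-rational component of $\mathcal{V}$ of positive dimension and invoking Theorem \ref{Th:CafureMatera} would yield at least $q - O(\sqrt{q})$ admissible values $m \in \mathbb{F}_{q^2}$ with $g(m, m^q) = 0$, far exceeding the few excluded values ($m_0$ together with any $m$ satisfying $m^{q+1} + m^q a^q + ma + b^q + b = 0$).

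The principal obstacle is verifying absolute irreducibility under the hypotheses $A = 0$ and $(a, b) \notin \mathrm{AG}(2, \mathbb{F}_{q^2})$. In the $A \neq 0$ case this was achieved by producing a distinguished monomial whose coefficient is a nonzero multiple of $A$; here the vanishing of $A$ forces a subtler analysis in which one must instead exploit $(a, b) \notin \mathrm{AG}(2, \mathbb{F}_{q^2})$ to produce a non-vanishing leading coefficient. A natural strategy is to show that at least one of $C - C^{q^2}$, $D - D^{q^2}$, $E + E^q$, $F - F^{q^2}$, $B + B^q$ is nonzero through an explicit computation on the formulas for $B, C, D, E, F$ displayed above, splitting into subcases according to whether $a^{q^2} = a$ or $b^{q^2} = b$ and ruling out each degenerate configuration in turn.
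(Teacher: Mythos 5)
Your plan correctly identifies the general framework that the paper also uses in the generic situation: pass to the curve $\overline{g}(x_0,x_1)=0$ obtained from $g(y_0,y_1)$ via a normal basis of $\mathbb{F}_{q^2}$ over $\mathbb{F}_q$, observe (using $E^q=-E$) that it is defined over $\mathbb{F}_q$, and count its $\mathbb{F}_q$-rational points (the paper invokes the Hasse--Weil bound on the resulting plane cubic rather than Theorem \ref{Th:CafureMatera}, but that difference is cosmetic). However, there is a genuine gap: everything hinges on producing an \emph{absolutely irreducible} $\mathbb{F}_q$-rational component of positive dimension, and your proposal does not do this --- it only names it as ``the principal obstacle'' and suggests checking that some coefficient of the auxiliary degree-$(q+2)$ relation is nonzero. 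That check is aimed at the wrong target: non-vanishing of $C-C^{q^2}$, $D-D^{q^2}$, $F-F^{q^2}$ or $B+B^q$ (note that $E+E^q=0$ identically) only shows that a certain \emph{necessary} condition on $m$ is a nontrivial polynomial, which bounds the number of solutions from \emph{above}; it says nothing about the irreducibility of the bounded-degree curve $\overline{g}=0$, and it produces no solution.

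The paper closes this gap by a case split on $C$. If $C\neq 0$, the curve $\mathcal{Q}$ is a cubic, and the paper argues that it always contains an absolutely irreducible component fixed by the Frobenius $\psi$, so Hasse--Weil yields at least $q+1-2\sqrt{q}$ points. If $C=0$ --- a degenerate case your proposal does not isolate --- the hypotheses $A=0$ and $a\notin\mathbb{F}_{q^2}$ force explicit relations on $a^{q^5}$ and $b^{q^4}$, under which $g(m,m^q)$ factors as $(a-a^{q^2})^{q^2}\,g_1(m,m^q)\,g_2(m,m^q)$ with $g_2$ of the linearized form $\alpha m^q+\beta m+\delta$; this factor has exactly $q$ roots in $\mathbb{F}_{q^2}$, of which at most one is the excluded value $-(b^{q^2}-b)/(a^{q^2}-a)$. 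Without either an irreducibility argument in the generic case or this explicit factorization in the degenerate one, your approach does not yet yield a single admissible $m$.
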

\begin{proof}
Denote by $\{\xi,\xi^{q}\}$ a normal basis of $\mathbb{F}_{q^2}$ over $\mathbb{F}_q$, and $\eta$ be the change of variables given by
$$y_0=x_0+\xi x_1, \quad  y_1=x_1+\xi x_0.$$
Also, let $\overline{g}(x_0,x_1)=g\circ \eta$. As it can be easily checked $E^q=-E$, and hence the polynomial $\overline{g}(x_0,x_1)$ is fixed by the Frobenius automorphism $\psi$
$$ \sum_{(i_0,i_1)} \alpha_{(i_0,i_1)}x_0^{i_0}x_1^{i_1}\mapsto  \sum_{(i_0,i_1)} \alpha_{(i_0,i_1)}^q x_0^{i_0}x_1^{i_1}.$$
Therefore the algebraic curve $\cQ$ with affine equation $\overline{g}(x_0,x_1)=0$ is defined over $\mathbb{F}_{q}$. First, assume that $C\neq 0$ holds. Then, $\cQ$ has degree $3$ and hence it is either absolutely irreducible or it contains an absolutely irreducible component fixed by $\psi$. Thus, by the Hasse-Weil bound there exist at least $q+1-2\sqrt{q}$ $\mathbb{F}_q$-rational points $(\bar{x}_0,\bar{x}_1)$ of $\cQ$. By construction, such points correspond to values $m\in \mathbb{F}_{q^2}$ satisfying $g(m,m^q)=0$, whence the claim is proved. 

Thus, from now on assume that $C=0$ holds.  
Then, from $A=0$ and $a\not\in\mathbb{F}_{q^2}$ we obtain 
\begin{equation}\label{aq5}
a^{q^5}=\frac{a^{q+1} - a^{q^2+q} + a^{q^3+q^2} - a^{q^4+q^3}}{a^{q^4}-a}.
    \end{equation}
Combining this condition with $C=0$, we have
\begin{eqnarray*}
&&a^{q^2+q+1}-a^{q^4+q+1}-a^{q^3+q^2+1}+a^{q^4+q^3+1}+ab^{q^2}-ab^{q^4}-a^{2q^2+q}\\&&+a^{q^4+q^2+q}+a^{q^3+2q^2}-a^{q^4+q^3+q^2}-a^{q^2}b+a^{q^2}b^{q^4}+a^{q^4}n-a^{q^4}b^{q^2}=0,
\end{eqnarray*}
and hence 
\begin{eqnarray}\label{C=0bis}
b^{q^4}&=&\frac{(a-a^{q^4})b^{q^2}+(a^{q^4}-a^{q^2})b+(a-a^{q^2})^{q^2+q+1}}{a-a^{q^2}}.
\end{eqnarray}
Substituting \eqref{aq5} and \eqref{C=0bis}, a direct computation shows that $g(m,m^q)=0$ reads
$$
(a-a^{q^2})^{q^2}\cdot g_1(m,m^q)\cdot g_2(m,m^q)=0
$$
with
\begin{eqnarray*}
g_1(m,m^q)&=& m^q(a-a^{q^2})^q(b^{q^2}-b+a^{q+1}-a^{q^2+q})+m(a-a^{q^2})(b^{q^2}-b+a^{q+1}-a^{q^2+q})^q+\\&&
(a^{q+1}b^{q}+a^{q+1}b^{q^2}-a^{q^3+1}b^{q}-a^{q^3+1}b^{q^2}-a^{q^2+q}b^{q}-
a^{q^2+q}b^{q^2}+a^{q^3+q^2}b^{q}+\\&&+a^{q^3+q^2}b^{q^2}-b^{q+1}+b^{q^3+1}+b^{q^
2+q}-b^{q^3+q^2}),
\end{eqnarray*}
\begin{eqnarray*}
g_2(m,m^q)&=& m^q(a-a^{q^2})^{q+1}+m(a-a^{q^2})(a-a^{q^4})+\\&&
-a^{q^2+q+1}+a^{q^4+q+1}+a^{q^3+q^2+1}-a^{q^4+q^3+1}+ab+ab^{q}-ab^{q^2}-ab^{
q^3}\\&&+a^{2q^2+q}-a^{q^4+q^2+q}-a^{q^3+2q^2}+a^{q^4+q^3+q^2}-a^{q^2}b^{q}+a
^{q^2}b^{q^3}-a^{q^4}b+a^{q^4}b^{q^2}.
\end{eqnarray*}
Therefore, there are $q$ distinct values $m\in \mathbb{F}_{q^2}$ satisfying $g_2(m,m^q)=0$,  and each of them (apart at most one, namely $m=-(b^{q^2}-b)/(a^{q^2}-a)$) satisfies $g(m,m^q)=0$.

\end{proof}

\section{Acknowledgements} 

This research was supported by the Italian National Group for Algebraic and Geometric Structures and their Applications (GNSAGA - INdAM). The second author is funded by the project ``Metodi matematici per la firma digitale ed il cloud computing" (Programma Operativo Nazionale (PON) “Ricerca e Innovazione” 2014-2020, University of Perugia).

\bibliographystyle{abbrv}
\bibliography{biblio.bib}

\section{Appendix}\label{Appendice}
Let ${\bf y}=(y_0,\ldots,y_5)$. We define the following polynomials:
\begin{align*}
f_1({\bf y}) :=&
-y_0^2 y_1 y_2 y_3 + y_0^2 y_1 y_2 y_5 - y_0^2 y_1 y_3 a^{q^3} - y_0^2 y_1 y_4 y_5 - y_0^2 y_1 y_4 a^{q^4}- y_0^2 y_1\gamma^{q^3}\\
& - y_0^2 y_2 y_3 a^{q^2} + y_0^2 y_2 y_5 a^{q^2} + y_0^2 y_3 y_4 y_5 + y_0^2 y_3 y_4 a^{q^4} + y_0^2 y_3 y_5 a^{q^3} - y_0^2 y_3 \gamma^{q^2}\\& + y_0^2 y_3\gamma^{q^3} + y_0^2 y_5 \gamma^{q^2} + y_0 y_1 y_2 y_3 y_4- y_0 y_1 y_2 y_3 a^{q} + y_0 y_1 y_2 y_4 a^{q^4} + y_0 y_1 y_2 y_5 a^{q}\\
& + y_0 y_1 y_2 y_5 a^{q^5}+ y_0 y_1 y_2\gamma^{q^4} + y_0 y_1 y_3 y_4 a^{q^3} -y_0 y_1 y_3 a^{q^3+q} - y_0 y_1 y_4 y_5 a^q - y_0 y_1 y_4 y_5 a^{q^5}\\
&- y_0 y_1 y_4 a^{q^4+q} + y_0 y_1 y_4\gamma^{q^3} - y_0 y_1 y_4\gamma^{q^4} - y_0 y_1 a^q\gamma^{q^3} - y_0 y_2 y_3 y_4 y_5+y_0 y_2 y_3 y_4 a^{q^2}\\
& - y_0 y_2 y_3 y_4 a^{q^4} - y_0 y_2 y_3 y_5 a^{q^3} - y_0 y_2 y_3 a^{q^3+q^2}-y_0 y_2 y_3 \gamma^q + y_0 y_2 y_3 \gamma^{q^2} - y_0 y_2 y_3\gamma^{q^3}\\
& - y_0 y_2 y_4 y_5 a^{q^2} + y_0 y_2 y_5 a^{q^5+q^2}+ y_0 y_2 y_5 \gamma^q - y_0 y_2 y_5 \gamma^{q^2} - y_0 y_2 a^{q^2}\gamma^{q^3} + y_0 y_2 a^{q^2}\gamma^{q^4} \\
&+y_0 y_3 y_4 y_5 a^{q^5}+ y_0 y_3 y_4 a^{q^3} a^{q^4} + y_0 y_3 y_4 \gamma^{q^2} - y_0 y_3 y_4\gamma^{q^3} + y_0 y_3 y_4\gamma^{q^4} + y_0 y_3 y_5a^{q^5+q^3}\\
&- y_0 y_3 a^{q^3} \gamma^q + y_0 y_3 a^{q^3}\gamma^{q^4} - y_0 y_4 y_5 \gamma^q -y_0 y_4 a^{q^4} \gamma^q + y_0 y_4 a^{q^4} \gamma^{q^2}+ y_0 y_5 a^{q^5} \gamma^{q^2}\\
& - y_0 \gamma^{q^3+q} + y_0 \gamma^{q^4+q^2} +y_1 y_2 y_3 y_4 a^q + y_1 y_2 y_4 a^{q^4+q}+ y_1 y_2 y_5 a^{q^5+q} + y_1 y_2 a^q\gamma^{q^4}\\
& + y_1 y_3 y_4 a^{q^3+q} - y_1 y_4 y_5 a^{q^5+q} + y_1 y_4 a^q\gamma^{q^3}- y_1 y_4 a^q\gamma^{q^4} - y_2 y_3 y_4 y_5 a^{q^5} + y_2 y_3 y_4 a^{q^3+q^2}\\
& - y_2 y_3 y_4 a^{q^4+q^3} + y_2 y_3 y_4 \gamma^q-y_2 y_3 y_4 \gamma^{q^2} + y_2 y_3 y_4\gamma^{q^3} - y_2 y_3 y_4\gamma^{q^4} - y_2 y_3 y_5a^{q^5+q^3}\\
& - y_2 y_3 a^{q^3}\gamma^{q^4}-y_2 y_4 y_5 a^{q^5+q^2} + y_2 y_4 a^{q^2}\gamma^{q^3} - y_2 y_4 a^{q^2}\gamma^{q^4} + y_2 y_4 a^{q^4} \gamma^q - y_2 y_4 a^{q^4} \gamma^{q^2}\\
&+y_2 y_5 a^{q^5} \gamma^q - y_2 y_5 a^{q^5} \gamma^{q^2} + y_2 \gamma^{q^4+q} - y_2 \gamma^{q^4+q^2} + y_3 y_4 a^{q^3} \gamma^q -y_4 y_5 a^{q^5} \gamma^q\\
&+ y_4 \gamma^{q^3+q} - y_4 \gamma^{q^4+q};
\end{align*}

\begin{align*}
f_2({\bf y}):=&-2 y_0^2 y_1 y_2 y_3 a + 2 y_0^2 y_1 y_2 y_5 a - 2 y_0^2 y_1 y_3 a^{q^3+1}- 2 y_0^2 y_1 y_4 y_5 a - 2 y_0^2 y_1 y_4 a^{q^4+1}\\
& - 2 y_0^2 y_1 a\gamma^{q^3} -2 y_0^2 y_2 y_3 a^{q^2+1}+ 2 y_0^2 y_2 y_5 a^{q^2+1}+ 2 y_0^2 y_3 y_4 y_5 a + 2 y_0^2 y_3 y_4 a^{q^4+1}\\
& +2 y_0^2 y_3 y_5 a^{q^3+1} - 2 y_0^2 y_3 a \gamma^{q^2} +2 y_0^2 y_3 a\gamma^{q^3} + 2 y_0^2 y_5 a \gamma^{q^2}+ y_0 y_1 y_2 y_3 y_4 a \\
&+y_0 y_1 y_2 y_3 y_4 a^{q^4} + y_0 y_1 y_2 y_3 y_5 a^{q^3}- y_0 y_1 y_2 y_3 y_5 a^{q^5} -y_0 y_1 y_2 y_3 a^{q+1} - y_0 y_1 y_2 y_3 a^{q^2+q}\\
& + y_0 y_1 y_2 y_3 a^{q^3+q^2} -y_0 y_1 y_2 y_3 \gamma + y_0 y_1 y_2 y_3 \gamma^q- y_0 y_1 y_2 y_3 \gamma^{q^2} + y_0 y_1 y_2 y_3\gamma^{q^3}\\
& -y_0 y_1 y_2 y_3\gamma^{q^5} + y_0 y_1 y_2 y_4 y_5 a^{q^2}- y_0 y_1 y_2 y_4 y_5 a^{q^4} +y_0 y_1 y_2 y_4 a^{q^4+1} + y_0 y_1 y_2 y_4 a^{q^4+q^2}\\
&+ y_0 y_1 y_2 y_5 a^{q+1}+ y_0 y_1 y_2 y_5 a^{q^5+1} + y_0 y_1 y_2 y_5 a^{q^2+q} + y_0 y_1 y_2 y_5 \gamma - y_0 y_1 y_2 y_5 \gamma^q\\
&+ y_0 y_1 y_2 y_5 \gamma^{q^2} - y_0 y_1 y_2 y_5\gamma^{q^4} + y_0 y_1 y_2 y_5\gamma^{q^5} + y_0 y_1 y_2 a\gamma^{q^4} +y_0 y_1 y_2 a^{q^2}\gamma^{q^3}\\
& + y_0 y_1 y_3 y_4 y_5 a^q - y_0 y_1 y_3 y_4 y_5 a^{q^3}+y_0 y_1 y_3 y_4 a^{q^3+1}+ y_0 y_1 y_3 y_4 a^{q^4+q} + y_0 y_1 y_3 y_5 a^{q^3+q}\\
& -y_0 y_1 y_3 y_5a^{q^5+q^3} - y_0 y_1 y_3 a^{q^3+q+1}- y_0 y_1 y_3 a^q \gamma^{q^2} + y_0 y_1 y_3 a^q\gamma^{q^3}- y_0 y_1 y_3 a^{q^3}\gamma\\
& + y_0 y_1 y_3 a^{q^3}\gamma^q- y_0 y_1 y_3 a^{q^3}\gamma^{q^5} - y_0 y_1 y_4 y_5 a^{q+1} -y_0 y_1 y_4 y_5 a^{q^5+1} - y_0 y_1 y_4 y_5 a^{q^5+q^4}\\
&- y_0 y_1 y_4 y_5 \gamma + y_0 y_1 y_4 y_5 \gamma^q - y_0 y_1 y_4 y_5\gamma^{q^3} + y_0 y_1 y_4 y_5\gamma^{q^4}- y_0 y_1 y_4 y_5\gamma^{q^5}\\
& - y_0 y_1 y_4 a^{q^4+q+1} +y_0 y_1 y_4 a\gamma^{q^3} - y_0 y_1 y_4 a\gamma^{q^4}- y_0 y_1 y_4 a^{q^4}\gamma + y_0 y_1 y_4 a^{q^4}\gamma^q\\
& - y_0 y_1 y_4 a^{q^4}\gamma^{q^5} + y_0 y_1 y_5 a^q \gamma^{q^2}- y_0 y_1 y_5 a^{q^5}\gamma^{q^3} - y_0 y_1 a^{q+1}\gamma^{q^3} - y_0 y_1 \gamma^{q^3+1}\\
& + y_0 y_1 \gamma^{q^3+q} - y_0 y_1\gamma^{q^5+q^3}- y_0 y_2 y_3 y_4 y_5 a - y_0 y_2 y_3 y_4 y_5 a^{q^2} + y_0 y_2 y_3 y_4 a^{q^2+1}\\
&- y_0 y_2 y_3 y_4 a^{q^4+1}- y_0 y_2 y_3 y_5 a^{q^3+1} - y_0 y_2 y_3 y_5 a^{q^5+q^2} - y_0 y_2 y_3 a^{q^3q^2+1} - y_0 y_2 y_3 a \gamma^q\\
&+ y_0 y_2 y_3 a \gamma^{q^2} - y_0 y_2 y_3 a\gamma^{q^3} - y_0 y_2 y_3 a^{q^2} \gamma - y_0 y_2 y_3 a^{q^2}\gamma^{q^5}- y_0 y_2 y_4 y_5 a^{q^2+1} \\
& -y_0 y_2 y_4 y_5 a^{q^4+q^2}+ y_0 y_2 y_5 a^{q^5+q^2+1} + y_0 y_2 y_5 a \gamma^q- y_0 y_2 y_5 a \gamma^{q^2} + y_0 y_2 y_5 a^{q^2} \gamma\\
& - y_0 y_2 y_5 a^{q^2}\gamma^{q^4} + y_0 y_2 y_5 a^{q^2}\gamma^{q^5}- y_0 y_2 a^{q^2+1}\gamma^{q^3} + y_0 y_2 a^{q^2+1}\gamma^{q^4} + y_0 y_3 y_4 y_5 a^{q^5+1}\\
& - y_0 y_3 y_4 y_5 a^{q^4+q^3}+ y_0 y_3 y_4 y_5 a^{q^5+q^4} + y_0 y_3 y_4 y_5 \gamma - y_0 y_3 y_4 y_5 \gamma^{q^2} + y_0 y_3 y_4 y_5\gamma^{q^3}\\
&- y_0 y_3 y_4 y_5\gamma^{q^4} + y_0 y_3 y_4 y_5\gamma^{q^5} + y_0 y_3 y_4 a^{q^4+q^3+1} + y_0 y_3 y_4 a \gamma^{q^2}- y_0 y_3 y_4 a\gamma^{q^3} \\
&+ y_0 y_3 y_4 a\gamma^{q^4} + y_0 y_3 y_4 a^{q^4}\gamma + y_0 y_3 y_4 a^{q^4}\gamma^{q^5}+ y_0 y_3 y_5 a^{q^5+q^3+1} + y_0 y_3 y_5 a^{q^3}\gamma \\
&- y_0 y_3 y_5 a^{q^3}\gamma^{q^4} + y_0 y_3 y_5 a^{q^3}\gamma^{q^5}- y_0 y_3 y_5 a^{q^5} \gamma^{q^2} + y_0 y_3 y_5 a^{q^5}\gamma^{q^3} - y_0 y_3 a^{q^3+1}b^q\\
& + y_0 y_3 a^{q^3+1}\gamma^{q^4}- y_0 y_3 \gamma^{q^2+1}+ y_0 y_3 \gamma^{q^3+1} - y_0 y_3 \gamma^{q^5+q^2} + y_0 y_3\gamma^{q^5+q^3} - y_0 y_4 y_5 a \gamma^q\\
&- y_0 y_4 y_5 a^{q^4} \gamma^{q^2} - y_0 y_4 a^{q^4+1} \gamma^q + y_0 y_4 a^{q^4+1} \gamma^{q^2} + y_0 y_5 a^{q^5+1} \gamma^{q^2}+ y_0 y_5 \gamma^{q^2+1}\\
&- y_0 y_5 \gamma^{q^4+q^2} + y_0 y_5 \gamma^{q^5+q^2} - y_0 a \gamma^{q^3+q} + y_0 a \gamma^{q^4+q^2}- y_1 y_2 y_3 y_4 y_5 a^q + y_1 y_2 y_3 y_4 y_5 a^{q^5}\\
& + y_1 y_2 y_3 y_4 a^{q^2+q} - y_1 y_2 y_3 y_4 a^{q^3+q^2}+ y_1 y_2 y_3 y_4 a^{q^4+q^3} + y_1 y_2 y_3 y_4 \gamma - y_1 y_2 y_3 y_4 \gamma^q\\
& + y_1 y_2 y_3 y_4 \gamma^{q^2}- y_1 y_2 y_3 y_4\gamma^{q^3} + y_1 y_2 y_3 y_4\gamma^{q^4} - y_1 y_2 y_3 y_5 a^{q^5+q} +y_1 y_2 y_3 y_5a^{q^5+q^3}\\
&- y_1 y_2 y_3 a^q\gamma^{q^5} + y_1 y_2 y_3 a^{q^3}\gamma^{q^4} - y_1 y_2 y_4 y_5 a^{q^4+q} + y_1 y_2 y_4 y_5 a^{q^5+q^2}+ y_1 y_2 y_4 a^{q^4+q^2+q}\displaybreak\\
& - y_1 y_2 y_4 a^{q^2}\gamma^{q^3} + y_1 y_2 y_4 a^{q^2}\gamma^{q^4} + y_1 y_2 y_4 a^{q^4} \gamma- y_1 y_2 y_4 a^{q^4} \gamma^q + y_1 y_2 y_4 a^{q^4} \gamma^{q^2}\\
& + y_1 y_2 y_5 a^{q^5+q^2+q} - y_1 y_2 y_5 a^q\gamma^{q^4}+ y_1 y_2 y_5 a^q\gamma^{q^5} + y_1 y_2 y_5 a^{q^5} \gamma - y_1 y_2 y_5 a^{q^5} \gamma^q \\
&+ y_1 y_2 y_5 a^{q^5} \gamma^{q^2}+ y_1 y_2 a^{q^2+q}\gamma^{q^4} + y_1 y_2 \gamma^{q^4+1} - y_1 y_2 \gamma^{q^4+q} + y_1 y_2 \gamma^{q^4+q^2} - y_1 y_3 y_4 y_5 a^{q^3+q}\\
&+ y_1 y_3 y_4 y_5 a^{q^5+q} + y_1 y_3 y_4 a^{q^4+q^3+q} + y_1 y_3 y_4 a^q \gamma^{q^2} - y_1 y_3 y_4 a^q\gamma^{q^3}+ y_1 y_3 y_4 a^q\gamma^{q^4}\\
& + y_1 y_3 y_4 a^{q^3}\gamma - y_1 y_3 y_4 a^{q^3}\gamma^q + y_1 y_3 a^{q^3+q}\gamma^{q^4}- y_1 y_3 a^{q^3+q}\gamma^{q^5} - y_1 y_4 y_5 a^{q^5+q^4+q}\\
& - y_1 y_4 y_5 a^q\gamma^{q^3} + y_1 y_4 y_5 a^q\gamma^{q^4}\- y_1 y_4 y_5 a^q\gamma^{q^5} - y_1 y_4 y_5 a^{q^5} \gamma + y_1 y_4 y_5 a^{q^5} \gamma^q + y_1 y_4 a^{q^4+q} \gamma^{q^2}\\
&- y_1 y_4 a^{q^4+q}\gamma^{q^5} + y_1 y_4 \gamma^{q^3+1} - y_1 y_4 \gamma^{q^4+1} - y_1 y_4 \gamma^{q^3+q} + y_1 y_4 \gamma^{q^4+q}+ y_1 y_5 a^{q^5+q} \gamma^{q^2}\\
& -y_1 y_5 a^{q^5+q}\gamma^{q^3} + y_1 a^q \gamma^{q^4+q^2} - y_1 a^q\gamma^{q^5+q^3} - y_2 y_3 y_4 y_5 a^{q^3+q^2}+ y_2 y_3 y_4 y_5 a^{q^4+q^3}\\
&  - y_2 y_3 y_4 y_5 a^{q^5+q^4}- y_2 y_3 y_4 y_5 \gamma^q + y_2 y_3 y_4 y_5 \gamma^{q^2}- y_2 y_3 y_4 y_5\gamma^{q^3} + y_2 y_3 y_4 y_5\gamma^{q^4} - y_2 y_3 y_4 y_5\gamma^{q^5}\\
& + y_2 y_3 y_4 a^{q^2} \gamma- y_2 y_3 y_4 a^{q^4}\gamma^{q^5} - y_2 y_3 y_5a^{q^5+q^3+q^2} + y_2 y_3 y_5 a^{q^3}\gamma^{q^4} - y_2 y_3 y_5 a^{q^3}\gamma^{q^5}\\
&- y_2 y_3 y_5 a^{q^5} \gamma^q + y_2 y_3 y_5 a^{q^5} \gamma^{q^2} - y_2 y_3 y_5 a^{q^5}\gamma^{q^3} - y_2 y_3 a^{q^3+q^2}\gamma^{q^5}- y_2 y_3 \gamma^{q^5+q} + y_2 y_3 \gamma^{q^5+q^2}\\
& - y_2 y_3\gamma^{q^5+q^3} - y_2 y_4 y_5 a^{q^5+q^4+q^2} - y_2 y_4 y_5 a^{q^2}\gamma^{q^3}+ y_2 y_4 y_5 a^{q^2}\gamma^{q^4} - y_2 y_4 y_5 a^{q^2}\gamma^{q^5} - y_2 y_4 y_5 a^{q^4} \gamma^q\\
& + y_2 y_4 y_5 a^{q^4} \gamma^{q^2}+ y_2 y_4 a^{q^4+q^2}\gamma - y_2 y_4 a^{q^4+q^2}\gamma^{q^5} + y_2 y_5 a^{q^5+q^2} \gamma - y_2 y_5 a^{q^5+q^2}\gamma^{q^3}- y_2 y_5 \gamma^{q^4+q}\\
& + y_2 y_5 \gamma^{q^5+q} + y_2 y_5 \gamma^{q^4+q^2} - y_2 y_5 \gamma^{q^5+q^2} +y_2 a^{q^2} \gamma^{q^4+1} - y_2 a^{q^2}\gamma^{q^5+q^3}- y_3 y_4 y_5 a^{q^3} \gamma^q\\
& + y_3 y_4 y_5 a^{q^5} \gamma + y_3 y_4 a^{q^4+q^3} \gamma + y_3 y_4 \gamma^{q^2+1}- y_3 y_4 \gamma^{q^3+1}+ y_3 y_4 \gamma^{q^4+1} + y_3 y_5 a^{q^5+q^3} \gamma\\
& - y_3 y_5 a^{q^5+q^3} \gamma^q + y_3 a^{q^3}\gamma^{q^4+1} - y_3 a^{q^3} \gamma^{q^5+q}- y_4 y_5 a^{q^5+q^4} \gamma^{q} - y_4 y_5 \gamma^{q^3+q} + y_4 y_5 \gamma^{q^4+q}\\
& - y_4 y_5 \gamma^{q^5+q} + y_4 a^{q^4} \gamma^{q^2+1}-y_4 a^{q^4} \gamma^{q^5+q} + y_5 a^{q^5} \gamma^{q^2+1} - y_5 a^{q^5} \gamma^{q^3+q} + \gamma^{q^4+q^2+1} - \gamma^{q^5+q^3+q};
\end{align*}

\begin{align*}
f_3({\bf y}):=&-y_0^2 y_1 y_2 y_3 a^2 + y_0^2 y_1 y_2 y_5 a^2 - y_0^2 y_1 y_3 a^{q^3+2}- y_0^2 y_1 y_4 y_5 a^2 - y_0^2 y_1 y_4 a^{q^4+2} - y_0^2 y_1 a^2\gamma^{q^3}\\
& - y_0^2 y_2 y_3 a^{q^2+2}+ y_0^2 y_2 y_5 a^{q^2+2} + y_0^2 y_3 y_4 y_5 a^{2}+ y_0^2 y_3 y_4 a^{q^4+2}+ y_0^2 y_3 y_5 a^{q^3+2} - y_0^2 y_3 a^{2}\gamma^{q^2}\\
& + y_0^2 y_3 a^{2}\gamma^{q^3} + y_0^2 y_5 a^{2}\gamma^{q^2}+ y_0 y_1 y_2 y_3 y_4 a^{q^4+1} + y_0 y_1 y_2 y_3 y_5 a^{q^3+1} - y_0 y_1 y_2 y_3 y_5 a^{q^5+1}\\
&- y_0 y_1 y_2 y_3 a^{q^2+q+1} + y_0 y_1 y_2 y_3 a^{q^3+q^2+1} - y_0 y_1 y_2 y_3 a \gamma+ y_0 y_1 y_2 y_3 a \gamma^q - y_0 y_1 y_2 y_3 a \gamma^{q^2}\\
& + y_0 y_1 y_2 y_3 a\gamma^{q^3}- y_0 y_1 y_2 y_3 a\gamma^{q^5}+ y_0 y_1 y_2 y_4 y_5 a^{q^2+1}  - y_0 y_1 y_2 y_4 y_5 a^{q^4+1}+ y_0 y_1 y_2 y_4 a^{q^4+q^2+1}\\
& + y_0 y_1 y_2 y_5 a^{q^2+q+1} + y_0 y_1 y_2 y_5 a \gamma- y_0 y_1 y_2 y_5 a \gamma^q + y_0 y_1 y_2 y_5 a \gamma^{q^2} - y_0 y_1 y_2 y_5 a\gamma^{q^4}\\
&+ y_0 y_1 y_2 y_5 a\gamma^{q^5} + y_0 y_1 y_2 a^{q^2+1}\gamma^{q^3} + y_0 y_1 y_3 y_4 y_5 a^{q+1}- y_0 y_1 y_3 y_4 y_5 a^{q^3+1}+ y_0 y_1 y_3 y_4 a^{q^4+q+1}\\
& + y_0 y_1 y_3 y_5 a^{q^3+q+1}- y_0 y_1 y_3 y_5 a^{q^5+q^3+1} - y_0 y_1 y_3 a^{q+1} \gamma^{q^2} + y_0 y_1 y_3 a^{q+1}\gamma^{q^3} - y_0 y_1 y_3 a^{q^3+1}\gamma\\
&+ y_0 y_1 y_3 a^{q^3+1}\gamma^q - y_0 y_1 y_3 a^{q^3+1}\gamma^{q^5} - y_0 y_1 y_4 y_5 a^{q^5+q^4+1} - y_0 y_1 y_4 y_5 a \gamma+ y_0 y_1 y_4 y_5 a \gamma^q\\
& - y_0 y_1 y_4 y_5 a\gamma^{q^3} + y_0 y_1 y_4 y_5 a\gamma^{q^4} - y_0 y_1 y_4 y_5 a\gamma^{q^5}- y_0 y_1 y_4 a^{q^4+1} \gamma + y_0 y_1 y_4 a^{q^4+1} \gamma^q\displaybreak\\& - y_0 y_1 y_4 a^{q^4+1}\gamma^{q^5} + y_0 y_1 y_5 a^{q+1} \gamma^{q^2}- y_0 y_1 y_5 a^{q^5+1}\gamma^{q^3} - y_0 y_1 a \gamma^{q^3+1} + y_0 y_1 a \gamma^{q^3+q}\\
& - y_0 y_1 a\gamma^{q^5+q^3}- y_0 y_2 y_3 y_4 y_5 a^{q^2+1}- y_0 y_2 y_3 y_5 a^{q^5+q^2+1} - y_0 y_2 y_3 a^{q^2+1}\gamma- y_0 y_2 y_3 a^{q^2+1}\gamma^{q^5}\\
& - y_0 y_2 y_4 y_5 a^{q^4+q^2+1} + y_0 y_2 y_5 a^{q^2+1}\gamma - y_0 y_2 y_5 a^{q^2+1}\gamma^{q^4}+ y_0 y_2 y_5 a^{q^2+1}\gamma^{q^5} - y_0 y_3 y_4 y_5 a^{q^4+q^3+1}\\
& + y_0 y_3 y_4 y_5 a^{q^5+q^4+1}+ y_0 y_3 y_4 y_5 a \gamma - y_0 y_3 y_4 y_5 a \gamma^{q^2} + y_0 y_3 y_4 y_5 a\gamma^{q^3} - y_0 y_3 y_4 y_5 a\gamma^{q^4}\\
&+ y_0 y_3 y_4 y_5 a\gamma^{q^5} + y_0 y_3 y_4 a^{q^4+1} \gamma + y_0 y_3 y_4 a^{q^4+1}\gamma^{q^5} + y_0 y_3 y_5 a^{q^3+1}\gamma- y_0 y_3 y_5 a^{q^3+1}\gamma^{q^4}\\
& + y_0 y_3 y_5 a^{q^3+1}\gamma^{q^5} - y_0 y_3 y_5 a^{q^5+1} \gamma^{q^2} + y_0 y_3 y_5 a^{q^5+1}\gamma^{q^3}- y_0 y_3 a \gamma^{q^2+1}+ y_0 y_3 a \gamma^{q^3+1}\\& - y_0 y_3 a \gamma^{q^5+q^2} + y_0 y_3 a\gamma^{q^5+q^3}- y_0 y_4 y_5 a^{q^4+1} \gamma^{q^2} + y_0 y_5 a \gamma^{q^2+1}- y_0 y_5 a \gamma^{q^4+q^2} + y_0 y_5 a \gamma^{q^5+q^2}\\
&- y_1 y_2 y_3 y_4 y_5 a^{q^2+q} + y_1 y_2 y_3 y_4 y_5 a^{q^3+q^2}- y_1 y_2 y_3 y_4 y_5 a^{q^4+q^3}+ y_1 y_2 y_3 y_4 y_5 a^{q^5+q^4} - y_1 y_2 y_3 y_4 y_5 \gamma\\
& + y_1 y_2 y_3 y_4 y_5 \gamma^q - y_1 y_2 y_3 y_4 y_5 \gamma^{q^2}+ y_1 y_2 y_3 y_4 y_5\gamma^{q^3} - y_1 y_2 y_3 y_4 y_5\gamma^{q^4} + y_1 y_2 y_3 y_4 y_5\gamma^{q^5}\\
& + y_1 y_2 y_3 y_4 a^{q^4}\gamma^{q^5}- y_1 y_2 y_3 y_5 a^{q^5+q^2+q} + y_1 y_2 y_3 y_5a^{q^5+q^3+q^2} - y_1 y_2 y_3 y_5 a^{q^3}\gamma^{q^4}+ y_1 y_2 y_3 y_5 a^{q^3}\gamma^{q^5}\\
& - y_1 y_2 y_3 y_5 a^{q^5} \gamma + y_1 y_2 y_3 y_5 a^{q^5} \gamma^q - y_1 y_2 y_3 y_5 a^{q^5} \gamma^{q^2}+ y_1 y_2 y_3 y_5 a^{q^5}\gamma^{q^3} - y_1 y_2 y_3 a^{q^2+q}\gamma^{q^5}\\
& + y_1 y_2 y_3 a^{q^3+q^2}\gamma^{q^5} - y_1 y_2 y_3 \gamma^{q^5+1}+ y_1 y_2 y_3 \gamma^{q^5+q} - y_1 y_2 y_3 \gamma^{q^5+q^2} + y_1 y_2 y_3\gamma^{q^5+q^3}\\
& - y_1 y_2 y_4 y_5 a^{q^4+q^2+q}+ y_1 y_2 y_4 y_5 a^{q^5+q^4+q^2} + y_1 y_2 y_4 y_5 a^{q^2}\gamma^{q^3} - y_1 y_2 y_4 y_5 a^{q^2}\gamma^{q^4} + y_1 y_2 y_4 y_5 a^{q^2}\gamma^{q^5}\\
&- y_1 y_2 y_4 y_5 a^{q^4} \gamma + y_1 y_2 y_4 y_5 a^{q^4} \gamma^q - y_1 y_2 y_4 y_5 a^{q^4} \gamma^{q^2} + y_1 y_2 y_4 a^{q^4+q^2}\gamma^{q^5}- y_1 y_2 y_5 a^{q^2+q}\gamma^{q^4}\\
& + y_1 y_2 y_5 a^{q^2+q}\gamma^{q^5} + y_1 y_2 y_5 a^{q^5+q^2}\gamma^{q^3} - y_1 y_2 y_5 \gamma^{q^4+1}+ y_1 y_2 y_5 \gamma^{q^5+1} + y_1 y_2 y_5 \gamma^{q^4+q}\\
& - y_1 y_2 y_5 \gamma^{q^5+q} - y_1 y_2 y_5 \gamma^{q^4+q^2} + y_1 y_2 y_5 \gamma^{q^5+q^2}+ y_1 y_2 a^{q^2}\gamma^{q^5+q^3} - y_1 y_3 y_4 y_5 a^{q^4+q^3+q}\\
& + y_1 y_3 y_4 y_5 a^{q^5+q^4+q} - y_1 y_3 y_4 y_5 a^q \gamma^{q^2}+ y_1 y_3 y_4 y_5 a^q\gamma^{q^3} - y_1 y_3 y_4 y_5 a^q\gamma^{q^4} + y_1 y_3 y_4 y_5 a^q\gamma^{q^5}\\
& - y_1 y_3 y_4 y_5 a^{q^3} \gamma+ y_1 y_3 y_4 y_5 a^{q^3} \gamma^q + y_1 y_3 y_4 a^{q^4+q}\gamma^{q^5} - y_1 y_3 y_5 a^{q^3+q}\gamma^{q^4} + y_1 y_3 y_5 a^{q^3+q}\gamma^{q^5}\\
&- y_1 y_3 y_5 a^{q^5+q} \gamma^{q^2} + y_1 y_3 y_5 a^{q^5+q}\gamma^{q^3} - y_1 y_3 y_5 a^{q^5+q^3} \gamma + y_1 y_3 y_5 a^{q^5+q^3} \gamma^q- y_1 y_3 a^q \gamma^{q^5+q^2}\\
& + y_1 y_3 a^q\gamma^{q^5+q^3} - y_1 y_3 a^{q^3} \gamma^{q^5+1} + y_1 y_3 a^{q^3} \gamma^{q^5+q}- y_1 y_4 y_5 a^{q^4+q} \gamma^{q^2} - y_1 y_4 y_5 a^{q^5+q^4} \gamma\\
& + y_1 y_4 y_5 a^{q^5+q^4} \gamma^q - y_1 y_4 y_5 \gamma^{q^3+1}+ y_1 y_4 y_5 \gamma^{q^4+1} - y_1 y_4 y_5 \gamma^{q^5+1} + y_1 y_4 y_5 \gamma^{q^3+q}\\
& - y_1 y_4 y_5 \gamma^{q^4+q} + y_1 y_4 y_5 \gamma^{q^5+q}- y_1 y_4 a^{q^4} \gamma^{q^5+1} + y_1 y_4 a^{q^4} \gamma^{q^5+q} - y_1 y_5 a^q \gamma^{q^4+q^2} + y_1 y_5 a^q \gamma^{q^5+q^2}\\
& - y_1 y_5 a^{q^5} \gamma^{q^3+1}+ y_1 y_5 a^{q^5} \gamma^{q^3+q} - y_1 \gamma^{q^5+q^3+1} + y_1 \gamma^{q^5+q^3+q} - y_2 y_3 y_4 y_5 a^{q^2} \gamma - y_2 y_3 y_5 a^{q^5+q^2} \gamma\\
&- y_2 y_3 a^{q^2} \gamma^{q^5+1} - y_2 y_4 y_5 a^{q^4+q^2}\gamma - y_2 y_5 a^{q^2} \gamma^{q^4+1} + y_2 y_5 a^{q^2} \gamma^{q^5+1}- y_3 y_4 y_5 a^{q^4+q^3} \gamma\\
& + y_3 y_4 y_5 a^{q^5+q^4} \gamma - y_3 y_4 y_5 \gamma^{q^2+1}+ y_3 y_4 y_5 \gamma^{q^3+1}- y_3 y_4 y_5 \gamma^{q^4+1} + y_3 y_4 y_5 \gamma^{q^5+1} + y_3 y_4 a^{q^4} \gamma^{q^5+1}\\
& - y_3 y_5 a^{q^3} \gamma^{q^4+1} + y_3 y_5 a^{q^3} \gamma^{q^5+1}- y_3 y_5 a^{q^5} \gamma^{q^2+1} + y_3 y_5 a^{q^5} \gamma^{q^3+1} - y_3 \gamma^{q^5+q^2+1} + y_3 \gamma^{q^5+q^3+1} \\
&-y_4 y_5 a^{q^4} \gamma^{q^2+1}- y_5 \gamma^{q^4+q^2+1} + y_5 \gamma^{q^5+q^2+1};
\end{align*}
\begin{align*}
    f_4({\bf y}):=& 
y_0^2 y_1 y_2 y_3 a - y_0^2 y_1 y_2 y_5 a + y_0^2 y_1 y_3 a^{q^3+1} + y_0^2 y_1 y_4 y_5 a +y_0^2 y_1 y_4 a^{q^4+1} + y_0^2 y_1 a \gamma^{q^3} +y_0^2 y_2 y_3 a^{q^2+1}
\\&
- y_0^2 y_2 y_5a^{q^2+1} - y_0^2 y_3 y_4 y_5 a - y_0^2 y_3 y_4 a^{q^4+1} - y_0^2 y_3 y_5 a^{q^3+1} + y_0^2 y_3 a \gamma^{q^2} -y_0^2 y_3 a \gamma^{q^3} - y_0^2 y_5 a 
\\&
\gamma^{q^2} - y_0 y_1 y_2 y_3 y_4 y_5 - y_0 y_1 y_2 y_3 y_4 a - y_0 y_1 y_2 y_3 y_4 a^{q^4} - y_0 y_1 y_2 y_3 y_5 a^{q^3} +y_0 y_1 y_2 y_3 a^{q^2+q} - y_0 y_1 y_2 
\\&
y_3 a^{q^3+q^2} + y_0 y_1 y_2 y_3 \gamma - y_0 y_1 y_2 y_3 \gamma^q + y_0 y_1 y_2 y_3 \gamma^{q^2} - y_0 y_1 y_2 y_3 \gamma^{q^3} -y_0 y_1 y_2 y_4 y_5 a^{q^2}\\& 
- y_0 y_1 y_2 y_4 a^{q^4+1}- y_0 y_1 y_2 y_4 a^{q^4+q^2} - y_0 y_1 y_2 y_5 a^{q^5+1} - y_0 y_1 y_2 y_5 a^{q^2+q} - y_0 y_1 y_2 y_5 \gamma +y_0 y_1 y_2 y_5 \gamma^q
\\&
- y_0 y_1 y_2 y_5 \gamma^{q^2} - y_0 y_1 y_2 a \gamma^{q^4} - y_0 y_1 y_2 a^{q^2} \gamma^{q^3} - y_0 y_1 y_3 y_4 y_5 a^q - y_0 y_1 y_3 y_4 a^{q^3+1} -y_0 y_1 y_3 y_4 a^{q^4+q} 
\\&
- y_0 y_1 y_3 y_5 a^{q^3+q} + y_0 y_1 y_3 a^q \gamma^{q^2} - y_0 y_1 y_3 a^q \gamma^{q^3} + y_0 y_1 y_3 a^{q^3} \gamma - y_0 y_1 y_3 a^{q^3} \gamma^q +y_0 y_1 y_4 y_5 a^{q^5+1}
\\&
+ y_0 y_1 y_4 y_5 \gamma - y_0 y_1 y_4 y_5 \gamma^q - y_0 y_1 y_4 a \gamma^{q^3} + y_0 y_1 y_4 a \gamma^{q^4} + y_0 y_1 y_4 a^{q^4} \gamma -y_0 y_1 y_4 a^{q^4} \gamma^q - 
\\&
y_0 y_1 y_5 a^q \gamma^{q^2} + y_0 y_1 \gamma^{q^3+1} - y_0 y_1 \gamma^{q^3+q} - y_0 y_2 y_3 y_4a^{q^2+1} + y_0 y_2 y_3 a^{q^2} \gamma - y_0 y_2 y_4 a^{q^4+q^2+1}
\\&
- y_0 y_2 y_5 a^{q^5+q^2+1} - y_0 y_2 y_5 a^{q^2} \gamma - y_0 y_2a^{q^2+1} \gamma^{q^4} - y_0 y_3 y_4 y_5 a^{q^5+1} - y_0 y_3 y_4 y_5 \gamma - y_0 y_3 y_4 a^{q^4+q^3+1}
\\&
-y_0 y_3 y_4 a \gamma^{q^2} + y_0 y_3 y_4 a \gamma^{q^3} - y_0 y_3 y_4 a \gamma^{q^4} - y_0 y_3 y_4 a^{q^4} \gamma - y_0 y_3 y_5 a^{q^5+q^3+1} - y_0 y_3 y_5 a^{q^3} \gamma 
\\&
-y_0 y_3 a^{q^3+1} \gamma^{q^4} + y_0 y_3 \gamma^{q^2+1} - y_0 y_3 \gamma^{q^3+1} - y_0 y_4 a^{q^4+1} \gamma^{q^2} - y_0 y_5 a^{q^5+1} \gamma^{q^2} - y_0 y_5 \gamma^{q^2+1}\\&
- y_0 a \gamma^{q^4+q^2} -y_1 y_2 y_3 y_4 y_5 a^{q^5} - y_1 y_2 y_3 y_4 a^{q^2+q} + y_1 y_2 y_3 y_4 a^{q^3+q^2} - y_1 y_2 y_3 y_4 a^{q^4+q^3} - y_1 y_2 y_3 y_4 \gamma
\\&
+ y_1 y_2 y_3 y_4 \gamma^q -y_1 y_2 y_3 y_4 \gamma^{q^2} + y_1 y_2 y_3 y_4 \gamma^{q^3} - y_1 y_2 y_3 y_4 \gamma^{q^4} - y_1 y_2 y_3 y_5 a^{q^5+q^3} - y_1 y_2 y_3 a^{q^3} \gamma^{q^4}
\\&
- y_1 y_2 y_4 y_5 a^{q^5+q^2} -y_1 y_2 y_4 a^{q^4+q^2+q} + y_1 y_2 y_4 a^{q^2} \gamma^{q^3} - y_1 y_2 y_4 a^{q^2} \gamma^{q^4} - y_1 y_2 y_4 a^{q^4} \gamma 
\\&
+ y_1 y_2 y_4 a^{q^4} \gamma^q - y_1 y_2 y_4 a^{q^4} \gamma^{q^2} -y_1 y_2 y_5 a^{q^5+q^2+q} - y_1 y_2 y_5 a^{q^5} \gamma + y_1 y_2 y_5 a^{q^5} \gamma^q
\\&
- y_1 y_2 y_5 a^{q^5} \gamma^{q^2} - y_1 y_2 a^{q^2+q} \gamma^{q^4} - y_1 y_2 \gamma^{q^4+1} + y_1 y_2 \gamma^{q^4+q} -y_1 y_2 \gamma^{q^4+q^2} - y_1 y_3 y_4 y_5 a^{q^5+q}
\\&
- y_1 y_3 y_4 a^{q^4+q^3+q} - y_1 y_3 y_4 a^q \gamma^{q^2} + y_1 y_3 y_4 a^q \gamma^{q^3} - y_1 y_3 y_4 a^q \gamma^{q^4} - y_1 y_3 y_4 a^{q^3} \gamma + y_1 y_3 y_4 a^{q^3} \gamma^q 
\\&
- y_1 y_3 y_5 a^{q^5+q^3+q} - y_1 y_3 a^{q^3+q} \gamma^{q^4} + y_1 y_4 y_5 a^{q^5} \gamma - y_1 y_4 y_5 a^{q^5} \gamma^q -y_1 y_4 a^{q^4+q} \gamma^{q^2} - y_1 y_4 \gamma^{q^3+1}
\\&
+ y_1 y_4 \gamma^{q^4+1} + y_1 y_4 \gamma^{q^3+q} - y_1 y_4 \gamma^{q^4+q} - y_1 y_5 a^{q^5+q} \gamma^{q^2} - y_1 a^q \gamma^{q^4+q^2} -y_2 y_3 y_4 a^{q^2} \gamma
\\&
- y_2 y_4 a^{q^4+q^2} \gamma- y_2 y_5 a^{q^5+q^2} \gamma - y_2 a^{q^2} \gamma^{q^4+1} - y_3 y_4 y_5 a^{q^5} \gamma - y_3 y_4 a^{q^4+q^3} \gamma \\&
- y_3 y_4 \gamma^{q^2+1} +y_3 y_4 \gamma^{q^3+1} -y_3 y_4 \gamma^{q^4+1} - y_3 y_5 a^{q^5+q^3} \gamma - y_3 a^{q^3} \gamma^{q^4+1}- y_4 a^{q^4} \gamma^{q^2+1}\\& - y_5 a^{q^5} \gamma^{q^2+1} - \gamma^{q^4+q^2+1};
\end{align*}

\begin{align*}
g_1({\bf y}) :=&- y_1 y_2 y_4 a^{q^4}+y_1 y_2 y_4 a^{q^2}  + y_1 y_2 y_5 a^q - y_1 y_2 y_5 a^{q^5}+ y_1 y_2 a^{q^2+q} - y_1 y_2\gamma^{q^4} - y_1 y_4 y_5 a^{q}\\
& + y_1 y_4 y_5 a^{q^5} - y_1 y_4 a^{q^4+q} + y_1 y_4 \gamma^{q^2} - y_1 y_4\gamma^{q^3} + y_1 y_4\gamma^{q^4} + y_1 a^q \gamma^{q^2} - y_1 a^q\gamma^{q^3} - y_2 y_4 y_5 a^{q^2}\\
& + y_2 y_4 y_5 a^{q^4}- y_2 y_5 a^{q^5+q^2} +y_2 y_5 \gamma^q - y_2 y_5 \gamma^{q^2} + y_2 y_5\gamma^{q^3} + y_2 a^{q^2} \gamma^q - y_2 a^{q^2} \gamma^{q^2}\\
&  + y_2 a^{q^2}\gamma^{q^3} - y_2 a^{q^2}\gamma^{q^4} + y_4 y_5 a^{q^5+q^4}- y_4 y_5 \gamma^q - y_4 a^{q^4} \gamma^q+ y_4 a^{q^4} \gamma^{q^2} - y_4 a^{q^4}\gamma^{q^3} + y_4 a^{q^4}\gamma^{q^4}\\
&- y_5 a^{q^5} \gamma^{q^2} + y_5 a^{q^5}\gamma^{q^3} + \gamma^{q^2+q} - \gamma^{q^3+q} - \gamma^{2q^2} + 2 \gamma^{q^3+q^2} - \gamma^{q^4+q^2} -\gamma^{2q^3} + \gamma^{q^4+q^3};
\end{align*}

\begin{align*}
g_2({\bf y}):=& +y_2 y_3 y_4 y_5 a^{q+1} - y_2 y_3 y_4 y_5 a^{q^5+1} - y_2 y_3 y_4 y_5 a^{q^2+q} + y_2 y_3 y_4 y_5 a^{q^3+q^2}\\
&- y_2 y_3 y_4 y_5 a^{q^4+q^3} + y_2 y_3 y_4 y_5 a^{q^5+q^4} + y_2 y_3 y_4 a^{q^3+q^2+1} - y_2 y_3 y_4 a^{q^4+q^3+1} + y_2 y_3 y_4 a \gamma^q\\
&- y_2 y_3 y_4 a \gamma^{q^2} + y_2 y_3 y_4 a\gamma^{q^3} - y_2 y_3 y_4 a\gamma^{q^4} - y_2 y_3 y_4 a^{q^2} \gamma + y_2 y_3 y_4 a^{q^4} \gamma - y_2 y_3 y_4 a^{q^4} \gamma^q\\
&+ y_2 y_3 y_4 a^{q^4} \gamma^{q^2} - y_2 y_3 y_4 a^{q^4}\gamma^{q^3} + y_2 y_3 y_4 a^{q^4}\gamma^{q^4} + y_2 y_3 y_5 a^{q^3+q+1} - y_2 y_3 y_5 a^{q^5+q^3+1}\\
&- y_2 y_3 y_5 a^{q^5+q^2+q} + y_2 y_3 y_5a^{q^5+q^3+q^2} - y_2 y_3 y_5 a^{q^3} \gamma^q + y_2 y_3 y_5 a^{q^3} \gamma^{q^2} - y_2 y_3 y_5 a^{q^3}\gamma^{q^3}\\
&+ y_2 y_3 y_5 a^{q^5} \gamma^q - y_2 y_3 y_5 a^{q^5} \gamma^{q^2} + y_2 y_3 y_5 a^{q^5}\gamma^{q^3} + y_2 y_3 a^{q^3+q^2+q+1} - y_2 y_3 a^{q^3+1}\gamma^{q^4}\\
&- y_2 y_3 a^{q^2+q} \gamma + y_2 y_3 a^{q^2+q} \gamma^q - y_2 y_3 a^{q^2+q} \gamma^{q^2} + y_2 y_3 a^{q^2+q}\gamma^{q^3} - y_2 y_3 a^{q^2+q}\gamma^{q^4}\\
&- y_2 y_3 a^{q^3+q^2}\gamma^q + y_2 y_3 a^{q^3+q^2}\gamma^{q^2} - y_2 y_3 a^{q^3+q^2}\gamma^{q^3} + y_2 y_3 a^{q^3+q^2}\gamma^{q^4} + y_2 y_3 \gamma^{q+1}\\
&- y_2 y_3 \gamma^{q^2+1}+ y_2 y_3 \gamma^{q^3+1} - y_2 y_3 \gamma^{2q} + 2 y_2 y_3 \gamma^{q^2+q} - 2 y_2 y_3 \gamma^{q^3+q} + y_2 y_3 \gamma^{q^4+q}\\
&- y_2 y_3 \gamma^{2q^2} + 2 y_2 y_3 \gamma^{q^3+q^2} - y_2 y_3 \gamma^{q^4+q^2} - y_2 y_3\gamma^{2q^3} + y_2 y_3\gamma^{q^4+q^3} + y_2 y_4 y_5 a^{q^4+q+1}\\
&- y_2 y_4 y_5 a^{q^5+q^2+1} - y_2 y_4 y_5 a^{q^4+q^2+q} + y_2 y_4 y_5 a^{q^5+q^4+q^2} + y_2 y_4 y_5 a^{q^2} \gamma - y_2 y_4 y_5 a^{q^2} \gamma^q\\
&+ y_2 y_4 y_5 a^{q^2} \gamma^{q^2} - y_2 y_4 y_5 a^{q^4} \gamma + y_2 y_4 y_5 a^{q^4} \gamma^q - y_2 y_4 y_5 a^{q^4} \gamma^{q^2} + y_2 y_4 a^{q^2+1}\gamma^{q^3}\\
&- y_2 y_4 a^{q^2+1}\gamma^{q^4} + y_2 y_4 a^{q^4+1} \gamma^q - y_2 y_4 a^{q^4+1} \gamma^{q^2} - y_2 y_4 a^{q^4+q^2}\gamma^q + y_2 y_4 a^{q^4+q^2}\gamma^{q^2}\\
&- y_2 y_4 a^{q^4+q^2}\gamma^{q^3} + y_2 y_4 a^{q^4+q^2}\gamma^{q^4} - y_2 y_5 a^{q^5+q^2+q+1} + y_2 y_5 a^{q+1}\gamma^{q^3} + y_2 y_5 a^{q^5+1} \gamma^q\\
&- y_2 y_5 a^{q^5+1} \gamma^{q^2} + y_2 y_5 a^{q^2+q} \gamma - y_2 y_5 a^{q^2+q} \gamma^q + y_2 y_5 a^{q^2+q} \gamma^{q^2} - y_2 y_5 a^{q^2+q}\gamma^{q^3}\\
&+ y_2 y_5 a^{q^5+q^2}\gamma^{q^3} - y_2 y_5 \gamma^{q+1}+ y_2 y_5 \gamma^{q^2+1}- y_2 y_5 \gamma^{q^3+1} + y_2 y_5 \gamma^{2q}- 2 y_2 y_5 \gamma^{q^2+q}\\
&+ y_2 y_5 \gamma^{q^3+q} + y_2 y_5 \gamma^{2q^2} - y_2 y_5 \gamma^{q^3+q^2} + y_2 a^{q^2+q+1}\gamma^{q^3} - y_2 a^{q^2+q+1}\gamma^{q^4} + y_2 a \gamma^{q^4+q}\\
&- y_2 a \gamma^{q^4+q^2} - y_2 a^{q^2} \gamma^{q^3+q} + y_2 a^{q^2} \gamma^{q^3+q^2} - y_2 a^{q^2}\gamma^{2q^3} + y_2 a^{q^2}\gamma^{q^4+q^3} - y_3 y_4 y_5 a^{q^4+q^3+q}\\
&+ y_3 y_4 y_5 a^{q^5+q^4+q} + y_3 y_4 y_5 a^q \gamma - y_3 y_4 y_5 a^q \gamma^q + y_3 y_4 y_5 a^{q^3} \gamma^q - y_3 y_4 y_5 a^{q^5} \gamma\\
&- y_3 y_4 a^{q^4+q^3+q+1} + y_3 y_4 a^{q^3+1}\gamma^q + y_3 y_4 a^{q^4+q} \gamma - y_3 y_4 a^{q^4+q} \gamma^q + y_3 y_4 a^{q^4+q} \gamma^{q^2}\\
&- y_3 y_4 a^{q^4+q}\gamma^{q^3} + y_3 y_4 a^{q^4+q}\gamma^{q^4} - y_3 y_4 \gamma^{q^2+1}+ y_3 y_4 \gamma^{q^3+1} - y_3 y_4 \gamma^{q^4+1} + y_3 y_5 a^{q^3+q} \gamma\\
&- y_3 y_5 a^{q^3+q} \gamma^q + y_3 y_5 a^{q^3+q} \gamma^{q^2} - y_3 y_5 a^{q^3+q}\gamma^{q^3} - y_3 y_5 a^{q^5+q} \gamma^{q^2} + y_3 y_5 a^{q^5+q}\gamma^{q^3}\\
&- y_3 y_5 a^{q^5+q^3} \gamma + y_3 y_5 a^{q^5+q^3} \gamma^q + y_3 a^{q^3+q+1}\gamma^{q^2} - y_3 a^{q^3+q+1}\gamma^{q^3} - y_3 a^q \gamma^{q^2+1}\\
&+ y_3 a^q \gamma^{q^3+1} + y_3 a^q \gamma^{q^2+q} - y_3 a^q \gamma^{q^3+q} - y_3 a^q \gamma^{2q^2} + 2 y_3 a^q \gamma^{q^3+q^2} - y_3 a^q \gamma^{q^4+q^2}\\
&- y_3 a^q\gamma^{2q^3} + y_3 a^q\gamma^{q^4+q^3} + y_3 a^{q^3} \gamma^{q+1}- y_3 a^{q^3} \gamma^{q^2+1}+ y_3 a^{q^3} \gamma^{q^3+1} - y_3 a^{q^3} \gamma^{q^4+1}- y_3 a^{q^3} \gamma^{2q}\\
&+ y_3 a^{q^3} \gamma^{q^2+q} - y_3 a^{q^3} \gamma^{q^3+q} + y_3 a^{q^3} \gamma^{q^4+q} + y_4 y_5 a^{q^5+q^4+q+1} - y_4 y_5 a^{q^5+1} \gamma^q\\
&- y_4 y_5 a^{q^4+q} \gamma^{q^2} -y_4 y_5 a^{q^5+q^4} \gamma + y_4 y_5 a^{q^5+q^4} \gamma^q + y_4 y_5 \gamma^{q+1}- y_4 y_5 \gamma^{2q} + y_4 y_5 \gamma^{q^2+q}\\
&- y_4 a^{q^4+q+1}\gamma^{q^3} + y_4 a^{q^4+q+1}\gamma^{q^4} + y_4 a \gamma^{q^3+q} - y_4 a \gamma^{q^4+q} + y_4 a^{q^4} \gamma^{q+1}- y_4 a^{q^4} \gamma^{q^2+1}\\
&+ y_4 a^{q^4} \gamma^{q^3+1} - y_4 a^{q^4} \gamma^{q^4+1} - y_4 a^{q^4} \gamma^{2q} + y_4 a^{q^4} \gamma^{q^2+q} -y_4 a^{q^4} \gamma^{q^3+q} + y_4 a^{q^4} \gamma^{q^4+q}\displaybreak\\
& - y_5 a^{q^5+q+1} \gamma^{q^2}+ y_5 a^{q^5+q+1}\gamma^{q^3} + y_5 a^q \gamma^{q^2+1}- y_5 a^q \gamma^{q^2+q} + y_5 a^q \gamma^{2q^2} - y_5 a^q \gamma^{q^3+q^2}\\& - y_5 a^{q^5} \gamma^{q^3+1}+ y_5 a^{q^5} \gamma^{q^3+q} + a^{q+1} \gamma^{q^3+q^2} - a^{q+1} \gamma^{q^4+q^2} - a^{q+1} \gamma^{2q^3} + a^{q+1} \gamma^{q^4+q^3} + \gamma^{q^3+q+1}\\
& - \gamma^{q^3+q^2+1}+ \gamma^{2q^3+1} - \gamma^{q^4+q^3+1} - \gamma^{q^3+2q} + \gamma^{q^3+q^2+q} - \gamma^{2q^3+1} + \gamma^{q^4+q^3+q};
\end{align*}

\begin{align*}
g_3({\bf y}) :=& +y_0 y_2 y_4 y_5^2 a^{q+1} - y_0 y_2 y_4 y_5^2 a^{q^5+1} - y_0 y_2 y_4 y_5^2 a^{q^2+q} + y_0 y_2 y_4 y_5^2 a^{q^3+q^2}- y_0 y_2 y_4 y_5^2 a^{q^4+q^3}\\
& + y_0 y_2 y_4 y_5^2 a^{q^5+q^4} + y_0 y_2 y_4 y_5 a^{q^4+q+1} + y_0 y_2 y_4 y_5 a^{q^3+q^2+1}- y_0 y_2 y_4 y_5 a^{q^5+q^2+1} - y_0 y_2 y_4 y_5 a^{q^4+q^3+1}\\
& + y_0 y_2 y_4 y_5 a \gamma^q - y_0 y_2 y_4 y_5 a \gamma^{q^2}+ y_0 y_2 y_4 y_5 a\gamma^{q^3} - y_0 y_2 y_4 y_5 a\gamma^{q^4} - y_0 y_2 y_4 y_5 a^{q^4+q^2+q}\\
& + y_0 y_2 y_4 y_5 a^{q^5+q^4+q^2}- y_0 y_2 y_4 y_5 a^{q^2} \gamma^q + y_0 y_2 y_4 y_5 a^{q^2} \gamma^{q^2} - y_0 y_2 y_4 y_5 a^{q^4}\gamma^{q^3} + y_0 y_2 y_4 y_5 a^{q^4}\gamma^{q^4} \\
&+ y_0 y_2 y_4 a^{q^2+1}\gamma^{q^3} - y_0 y_2 y_4 a^{q^2+1}\gamma^{q^4} + y_0 y_2 y_4 a^{q^4+1} \gamma^q - y_0 y_2 y_4 a^{q^4+1} \gamma^{q^2}- y_0 y_2 y_4 a^{q^4+q^2}\gamma^q\\
& + y_0 y_2 y_4 a^{q^4+q^2}\gamma^{q^2} - y_0 y_2 y_4 a^{q^4+q^2}\gamma^{q^3} + y_0 y_2 y_4 a^{q^4+q^2}\gamma^{q^4}+ y_0 y_2 y_5^2 a^{q^3+q+1}- y_0 y_2 y_5^2 a^{q^5+q^3+1}\\
& - y_0 y_2 y_5^2 a^{q^5+q^2+q} + y_0 y_2 y_5^2a^{q^5+q^3+q^2}- y_0 y_2 y_5^2 a^{q^3} \gamma^q + y_0 y_2 y_5^2 a^{q^3} \gamma^{q^2} - y_0 y_2 y_5^2 a^{q^3}\gamma^{q^3}\\
& + y_0 y_2 y_5^2 a^{q^5} \gamma^q - y_0 y_2 y_5^2 a^{q^5} \gamma^{q^2}+ y_0 y_2 y_5^2 a^{q^5}\gamma^{q^3} + y_0 y_2 y_5 a^{q^3+q^2+q+1} - y_0 y_2 y_5 a^{q^5+q^2+q+1}\\
& + y_0 y_2 y_5 a^{q+1}\gamma^{q^3}- y_0 y_2 y_5 a^{q^3+1}\gamma^{q^4} + y_0 y_2 y_5 a^{q^5+1} \gamma^q - y_0 y_2 y_5 a^{q^5+1} \gamma^{q^2} - y_0 y_2 y_5 a^{q^2+q}\gamma^{q^4}\\
&- y_0 y_2 y_5 a^{q^3+q^2}\gamma^q + y_0 y_2 y_5 a^{q^3+q^2}\gamma^{q^2} - y_0 y_2 y_5 a^{q^3+q^2}\gamma^{q^3} + y_0 y_2 y_5 a^{q^3+q^2}\gamma^{q^4}+ y_0 y_2 y_5 a^{q^5+q^2}\gamma^{q^3}\\
& - y_0 y_2 y_5 \gamma^{q^3+q} + y_0 y_2 y_5 \gamma^{q^4+q} + y_0 y_2 y_5 \gamma^{q^3+q^2} - y_0 y_2 y_5 \gamma^{q^4+q^2}- y_0 y_2 y_5\gamma^{2q^3} + y_0 y_2 y_5\gamma^{q^4+q^3}\\
& + y_0 y_2 a^{q^2+q+1}\gamma^{q^3} - y_0 y_2 a^{q^2+q+1}\gamma^{q^4} + y_0 y_2 a \gamma^{q^4+q}- y_0 y_2 a \gamma^{q^4+q^2} - y_0 y_2 a^{q^2} \gamma^{q^3+q}\\
& + y_0 y_2 a^{q^2} \gamma^{q^3+q^2} - y_0 y_2 a^{q^2}\gamma^{2q^3}+ y_0 y_2 a^{q^2}\gamma^{q^4+q^3}+ y_0 y_4 y_5^2 a^{q^5+q+1} - y_0 y_4 y_5^2 a^{2q^5+1} - y_0 y_4 y_5^2 a^q \gamma^{q^2}\\
& - y_0 y_4 y_5^2 a^{q^5+q^4+q^3}+ y_0 y_4 y_5^2 a^{q^3} \gamma^q + y_0 y_4 y_5^2 a^{2q^5+q^4} - y_0 y_4 y_5^2 a^{q^5} \gamma^q + y_0 y_4 y_5^2 a^{q^5} \gamma^{q^2}\\
&+ y_0 y_4 y_5 a^{q^5+q^4+q+1} - y_0 y_4 y_5 a^{q+1} \gamma^{q^3} + y_0 y_4 y_5 a^{q+1}\gamma^{q^4} - y_0 y_4 y_5 a^{q^5+q^4+q^3+1}+ y_0 y_4 y_5 a^{q^3+1}\gamma^q\\
& - y_0 y_4 y_5 a^{q^5+1} \gamma^{q^2} + 2 y_0 y_4 y_5 a^{q^5+1}\gamma^{q^3} - 2 y_0 y_4 y_5 a^{q^5+1}\gamma^{q^4}- y_0 y_4 y_5 a^{q^4+q} \gamma^{q^2} + y_0 y_4 y_5 a^{q^4+q^3} \gamma^q\\
& - y_0 y_4 y_5 a^{q^4+q^3} \gamma^{q^2} + y_0 y_4 y_5 a^{q^4+q^3}\gamma^{q^3}- y_0 y_4 y_5 a^{q^4+q^3}\gamma^{q^4} - y_0 y_4 y_5 a^{q^5+q^4} \gamma^q + 2 y_0 y_4 y_5 a^{q^5+q^4} \gamma^{q^2}\\
& - 2 y_0 y_4 y_5 a^{q^5+q^4}\gamma^{q^3}+ 2 y_0 y_4 y_5 a^{q^5+q^4}\gamma^{q^4} - y_0 y_4 y_5 \gamma^{q^2+q} + y_0 y_4 y_5 \gamma^{q^3+q} - y_0 y_4 y_5 \gamma^{q^4+q} + y_0 y_4 y_5 \gamma^{2q^2}\\
&-y_0 y_4 y_5 \gamma^{q^3+q^2} + y_0 y_4 y_5 \gamma^{q^4+q^2} - y_0 y_4 a^{q^4+q+1}\gamma^{q^3} + y_0 y_4 a^{q^4+q+1}\gamma^{q^4} + y_0 y_4 a^{q^5+q^4+q^3+1} \gamma^q\\
&- y_0 y_4 a^{q^4} \gamma^{q^4+q} + y_0 y_4 a^{q^4} \gamma^{2q^2} - 2 y_0 y_4 a^{q^4} \gamma^{q^3+q^2} + 2 y_0 y_4 a^{q^4} \gamma^{q^4+q^2} + y_0 y_4 a^{q^4}\gamma^{2q^3}- 2 y_0 y_4 a^{q^4}\gamma^{q^4+q^3}\\
& + y_0 y_4 a^{q^4}\gamma^{2q^4} + y_0 y_5^2 a^{q^5+q^3+q+1} -y_0 y_5^2 a^{2q^5+q^3+1} - y_0 y_5^2a^{q^5+q} \gamma^{q^2}+ y_0 y_5^2 a^{q^5+q^3} \gamma^{q^2} - y_0 y_5^2 a^{q^5+q^3}\gamma^{q^3}\\
& + y_0 y_5^2 a^{2q^5}\gamma^{q^3} + y_0 y_5 a^{q^3+q+1}\gamma^{q^2} - y_0 y_5 a^{q^3+q+1}\gamma^{q^3}+ y_0 y_5 a^{q^3+q+1}\gamma^{q^4} - y_0 y_5 a^{q^5+q+1} \gamma^{q^2}\\
& + y_0 y_5 a^{q^5+q+1}\gamma^{q^3} + y_0 y_5 a^{2q^5+q^3+1} \gamma^q- y_0 y_5 a^{2q^5+q^3+1} \gamma^{q^2} + y_0 y_5 a^{2q^5+q^3+1}\gamma^{q^3} - 2 y_0 y_5 a^{2q^5+q^3+1}\gamma^{q^4}\\
& - y_0 y_5 a^q \gamma^{q^4+q^2} - y_0 y_5 a^{q^3} \gamma^{q^2+q}+ y_0 y_5 a^{q^3} \gamma^{q^3+q} + y_0 y_5 a^{q^3} \gamma^{2q^2} - 2 y_0 y_5 a^{q^3} \gamma^{q^3+q^2}\displaybreak\\
& + y_0 y_5 a^{q^3} \gamma^{q^4+q^2} + y_0 y_5 a^{q^3}\gamma^{2q^3}- y_0 y_5 a^{q^3}\gamma^{q^4+q^3} - y_0 y_5 a^{q^5} \gamma^{q^3+q} + 2 y_0 y_5 a^{q^5} \gamma^{q^3+q^2} - 2 y_0 y_5 a^{q^5}\gamma^{2q^3}\\
& + 2 y_0 y_5 a^{q^5} \gamma^{q^4+q^3}+ y_0 a^{q+1} \gamma^{q^3+q^2} - y_0 a^{q+1} \gamma^{q^4+q^2} - y_0 a^{q+1} \gamma^{2q^3} + y_0 a^{q+1} \gamma^{q^4+q^3} + y_0 a^{q^3+1}\gamma^{q^4+q}\\
& - y_0 a^{q^3+1} \gamma^{q^4+q^2}+ y_0 a^{q^3+1}\gamma^{q^4+q^3} - y_0 a^{q^3+1}\gamma^{2q^4} - y_0 \gamma^{q^3+q^2+q} + y_0 \gamma^{2q^3+q} - y_0 \gamma^{q^4+q^3+q} + y_0 \gamma^{q^3+2q^2}\\
& - 2 y_0 \gamma^{2q^3+q^2}+ 2 y_0 \gamma^{q^4+q^3+q^2} + y_0\gamma^{3q^3} -2 y_0\gamma^{q^4+2q^3} + y_0\gamma^{2q^4+q^3} - y_2 y_4 y_5^2 a^{q^5+q^2+q} - y_2 y_4 y_5^2 a^{q^4+q^3+q}\\
&+ y_2 y_4 y_5^2 a^{q^5+q^4+q} + y_2 y_4 y_5^2 a^q \gamma- y_2 y_4 y_5^2 a^q \gamma^q + y_2 y_4 y_5^2 a^q \gamma^{q^2}+ y_2 y_4 y_5^2a^{q^5+q^3+q^2} - y_2 y_4 y_5^2 a^{q^5} \gamma\\
& + y_2 y_4 y_5^2 a^{q^5} \gamma^q - y_2 y_4 y_5^2 a^{q^5} \gamma^{q^2}- y_2 y_4 y_5 a^{q^4+q^3+q^2+q} + y_2 y_4 y_5 a^{q^2+q}\gamma^{q^3} - y_2 y_4 y_5 a^{q^2+q}\gamma^{q^4}\\
& + y_2 y_4 y_5 a^{q^4+q} \gamma- y_2 y_4 y_5 a^{q^4+q} \gamma^q + y_2 y_4 y_5 a^{q^4+q} \gamma^{q^2} - y_2 y_4 y_5 a^{q^4+q}\gamma^{q^3} + y_2 y_4 y_5 a^{q^4+q}\gamma^{q^4}\\
&+ y_2 y_4 y_5 a^{q^5+q^4+q^3+q^2} + y_2 y_4 y_5 a^{q^3+q^2} \gamma - y_2 y_4 y_5 a^{q^3+q^2} \gamma^q + y_2 y_4 y_5 a^{q^3+q^2} \gamma^{q^2}- y_2 y_4 y_5 a^{q^3+q^2}\gamma^{q^3}\\
& + y_2 y_4 y_5 a^{q^3+q^2}\gamma^{q^4} - y_2 y_4 y_5 a^{q^5+q^2} \gamma - y_2 y_4 y_5 a^{q^4+q^3} \gamma+ y_2 y_4 y_5 a^{q^5+q^4} \gamma^q - y_2 y_4 y_5 a^{q^5+q^4} \gamma^{q^2}\\
& + y_2 y_4 y_5 \gamma^{q+1}- y_2 y_4 y_5 \gamma^{q^2+1}+ y_2 y_4 y_5 \gamma^{q^3+1}- y_2 y_4 y_5 \gamma^{q^4+1} - y_2 y_4 y_5 \gamma^{2q} + 2 y_2 y_4 y_5 \gamma^{q^2+q}\\
& - y_2 y_4 y_5 \gamma^{q^3+q} + y_2 y_4 y_5 \gamma^{q^4+q}- y_2 y_4 y_5 \gamma^{2q^2} + y_2 y_4 y_5 \gamma^{q^3+q^2} - y_2 y_4 y_5 \gamma^{q^4+q^2}- y_2 y_4 a^{q^4+q^3+q^2}\gamma^q \\
& + y_2 y_4 a^{q^4+q^3+q^2}\gamma^{q^2}- y_2 y_4 a^{q^4+q^3+q^2}\gamma^{q^3} + y_2 y_4 a^{q^4+q^3+q^2}\gamma^{q^4} + y_2 y_4 a^{q^2} \gamma^{q^3+1} - y_2 y_4 a^{q^2} \gamma^{q^4+1}\\
& + y_2 y_4 a^{q^4} \gamma^{q+1}- y_2 y_4 a^{q^4} \gamma^{q^2+1}- y_2 y_4 a^{q^4} \gamma^{2q} + 2 y_2 y_4 a^{q^4} \gamma^{q^2+q} - y_2 y_4 a^{q^4} \gamma^{q^3+q} + y_2 y_4 a^{q^4} \gamma^{q^4+q}\\
&- y_2 y_4 a^{q^4}  \gamma^{2q^2} + y_2 y_4 a^{q^4} \gamma^{q^3+q^2} - y_2 y_4 a^{q^4} \gamma^{q^4+q^2} - y_2 y_5^2 a^{2q^5+q^2+q} + y_2 y_5^2 a^{q^3+q} \gamma- y_2 y_5^2 a^{q^3+q} \gamma^q\\
& + y_2 y_5^2 a^{q^3+q} \gamma^{q^2} - y_2 y_5^2 a^{q^3+q}\gamma^{q^3} + y_2 y_5^2a^{q^5+q}\gamma^{q^3} + y_2 y_5^2 a^{2q^5+q^3+q^2}- y_2 y_5^2 a^{q^5+q^3} \gamma\\
& + y_2 y_5^2 a^{2q^5} \gamma^q - y_2 y_5^2a^{2q^5} \gamma^{q^2} + y_2 y_5a^{q^3+q^2+q} \gamma - y_2 y_5 a^{q^3+q^2+q} \gamma^q+ y_2 y_5 a^{q^3+q^2+q} \gamma^{q^2}\\
& - y_2 y_5 a^{q^3+q^2+q}\gamma^{q^3} - y_2 y_5 a^{q^5+q^2+q} \gamma + y_2 y_5 a^{q^5+q^2+q} \gamma^q- y_2 y_5 a^{q^5+q^2+q} \gamma^{q^2} + 2 y_2 y_5 a^{q^5+q^2+q}\gamma^{q^3}\\
& - 2 y_2 y_5 a^{q^5+q^2+q}\gamma^{q^4} + y_2 y_5 a^q \gamma^{q^3+1}- y_2 y_5 a^q \gamma^{q^3+q} + y_2 y_5 a^q \gamma^{q^3+q^2} - y_2 y_5 a^q\gamma^{2q^3}\\
& + y_2 y_5 a^q\gamma^{q^4+q^3} - y_2 y_5a^{q^5+q^3+q^2} \gamma^q+ y_2 y_5a^{q^5+q^3+q^2} \gamma^{q^2} - y_2 y_5a^{q^5+q^3+q^2}\gamma^{q^3} + 2 y_2 y_5a^{q^5+q^3+q^2}\gamma^{q^4}\\
& - y_2 y_5 a^{q^3} \gamma^{q^4+1} + y_2 y_5 a^{q^5}\gamma^{q+1}- y_2 y_5 a^{q^5} \gamma^{q^2+1}- y_2 y_5 a^{q^5} \gamma^{2q} + 2 y_2 y_5 a^{q^5} \gamma^{q^2+q} - y_2 y_5 a^{q^5} \gamma^{q^3+q}\\
& + 2 y_2 y_5 a^{q^5} \gamma^{q^4+q}- y_2 y_5 a^{q^5} \gamma^{2q^2} + y_2 y_5 a^{q^5} \gamma^{q^3+q^2} - 2 y_2 y_5 a^{q^5} \gamma^{q^4+q^2} + y_2 a^{q^2+q} \gamma^{q^3+1} - y_2 a^{q^2+q} \gamma^{q^4+1}\\
&- y_2 a^{q^2+q} \gamma^{q^3+q} + y_2 a^{q^2+q} \gamma^{q^4+q} + y_2 a^{q^2+q} \gamma^{q^3+q^2} - y_2 a^{q^2+q} \gamma^{q^4+q^2} - y_2 a^{q^2+q}\gamma^{2q^3}\\
&+ 2 y_2 a^{q^2+q}\gamma^{q^4+q^3} - y_2 a^{q^2+q}\gamma^{2q^4} - y_2 a^{q^3+q^2} \gamma^{q^4+q} + y_2 a^{q^3+q^2} \gamma^{q^4+q^2} - y_2 a^{q^3+q^2}\gamma^{q^4+q^3} + y_2 a^{q^3+q^2}\gamma^{2q^4}\\
&+ y_2 \gamma^{q^4+q+1} - y_2\gamma^{q^4+q^2+1} - y_2 \gamma^{q^4+2q} + 2 y_2 \gamma^{q^4+q^2+q} - y_2 \gamma^{q^4+q^3+q} + y_2 \gamma^{2q^4+q} - y_2 \gamma^{q^4+2q^2}\\
&+ y_2 \gamma^{q^4+q^3+q^2} - y_2 \gamma^{2q^4+q^2} - y_4 y_5^2 a^{q^5+q^4+q^3+q} + y_4 y_5^2 a^{2q^5+q^4+q} + y_4 y_5^2a^{q^5+q} \gamma\\
&- y_4 y_5^2a^{q^5+q} \gamma^q + y_4 y_5^2 a^{q^5+q^3} \gamma^q - y_4 y_5^2 a^{2q^5} \gamma - y_4 y_5 a^{q^4+q^3+q} \gamma^{q^2} + y_4 y_5 a^{q^4+q^3+q}\gamma^{q^3}\\
&- y_4 y_5 a^{q^4+q^3+q}\gamma^{q^4} + y_4 y_5 a^{q^5+q^4+q} \gamma - y_4 y_5 a^{q^5+q^4+q} \gamma^q + y_4 y_5 a^{q^5+q^4+q} \gamma^{q^2}- 2 y_4 y_5 a^{q^5+q^4+q}\gamma^{q^3}\\
& + 2 y_4 y_5 a^{q^5+q^4+q}\gamma^{q^4} - y_4 y_5 a^q \gamma^{q^3+1} + y_4 y_5 a^q \gamma^{q^4+1} + y_4 y_5 a^q \gamma^{q^3+q} - y_4 y_5 a^q \gamma^{q^4+q} - y_4 y_5 a^{q^5+q^4+q^3} \gamma\\
& + y_4 y_5 a^{q^5+q^4+q^3} \gamma^q + y_4 y_5 a^{q^3} \gamma^{q+1}- y_4 y_5 a^{q^3} \gamma^{2q}+ y_4 y_5 a^{q^3} \gamma^{q^2+q} - y_4 y_5 a^{q^3} \gamma^{q^3+q}\\
& + y_4 y_5 a^{q^3} \gamma^{q^4+q} - y_4 y_5 a^{q^5} \gamma^{q^2+1} + 2 y_4 y_5 a^{q^5} \gamma^{q^3+1}- 2 y_4 y_5 a^{q^5} \gamma^{q^4+1} - y_4 a^{q^4+q} \gamma^{q^3+1} + y_4 a^{q^4+q} \gamma^{q^4+1}\displaybreak\\
& + y_4 a^{q^4+q} \gamma^{q^3+q} - y_4 a^{q^4+q} \gamma^{q^4+q}- y_4 a^{q^4+q} \gamma^{q^3+q^2} + y_4 a^{q^4+q} \gamma^{q^4+q^2} + y_4 a^{q^4+q}\gamma^{2q^3} - 2 y_4 a^{q^4+q}\gamma^{q^4+q^3}\\
& + y_4 a^{q^4+q}\gamma^{2q^4} + y_4 a^{q^4+q^3} \gamma^{q+1}- y_4 a^{q^4+q^3} \gamma^{q^2+1}+ y_4 a^{q^4+q^3} \gamma^{q^3+1} - y_4 a^{q^4+q^3} \gamma^{q^4+1} - y_4 a^{q^4+q^3} \gamma^{2q}\\
& + y_4 a^{q^4+q^3} \gamma^{q^2+q} - y_4 a^{q^4+q^3} \gamma^{q^3+q}+ y_4 a^{q^4+q^3} \gamma^{q^4+q} + y_4 \gamma^{q^3+q^2+1} - y_4 \gamma^{q^4+q^2+1} - y_4 \gamma^{2q^3+1}\\
& + 2 y_4 \gamma^{q^4+q^3+1} - y_4 \gamma^{2q^4+1} + y_5^2 a^{q^5+q^3+q} \gamma- y_5^2a^{q^5+q^3+q} \gamma^{q} + y_5^2 a^{q^5+q^3+q} \gamma^{q^2} - y_5^2 a^{q^5+q^3+q}\gamma^{q^3}\\
& - y_5^2a^{2q^5+q} \gamma^{q^2} + y_5^2a^{2q^5+q}\gamma^{q^3}- y_5^2a^{2q^5+q^3} \gamma + y_5^2a^{2q^5+q^3} \gamma^{q} + y_5 a^{q^3+q} \gamma^{q^2+1}- y_5 a^{q^3+q} \gamma^{q^3+1}\\
& + y_5 a^{q^3+q} \gamma^{q^4+1}- y_5 a^{q^3+q} \gamma^{q^2+q} + y_5 a^{q^3+q} \gamma^{q^3+q} - y_5 a^{q^3+q} \gamma^{q^4+q} + y_5 a^{q^3+q} \gamma^{2q^2} - 2 y_5 a^{q^3+q} \gamma^{q^3+q^2}\\
&+ y_5 a^{q^3+q} \gamma^{q^4+q^2} + y_5 a^{q^3+q}\gamma^{2q^3} - y_5 a^{q^3+q}\gamma^{q^4+q^3} - y_5 a^{q^5+q} \gamma^{q^2+1} + y_5 a^{q^5+q} \gamma^{q^3+1} + y_5 a^{q^5+q} \gamma^{q^2+q}\\
&- y_5 a^{q^4+q} \gamma^{q^3+q} - y_5 a^{q^4+q} \gamma^{2q^2} + 3 y_5 a^{q^5+q} \gamma^{q^3+q^2} - 2 y_5 a^{q^5+q} \gamma^{q^4+q^2} - 2 y_5 a^{q^5+q}\gamma^{2q^3}\\
&+ 2 y_5 a^{q^5+q} \gamma^{q^4+q^3} + y_5 a^{q^5+q^3} \gamma^{q+1} - y_5 a^{q^5+q^3} \gamma^{q^2+1} + y_5a^{q^5+q^3} \gamma^{q^3+1} - 2 y_5a^{q^5+q^3} \gamma^{q^4+1}\\
&- y_5a^{q^5+q^3} \gamma^{2q} + y_5 a^{q^5+q^3} \gamma^{q^2+q} - y_5 a^{q^5+q^3} \gamma^{q^3+q} + 2 y_5a^{q^5+q^3} \gamma^{q^4+q} + a^q \gamma^{q^3+q^2+1} - a^q \gamma^{q^4+q^+12}\\
&- a^q \gamma^{2q^3+1} + a^q \gamma^{q^4+q^3+1} - a^q \gamma^{q^3+q^2+q} + a^q \gamma^{q^4+q^2+q} + a^q \gamma^{2q^3+q} - a^q \gamma^{q^4+q^3+q} + a^q \gamma^{q^3+2q^2}\\
&- a^q \gamma^{q^4+2q^2} - 2 a^q \gamma^{2q^3+q^2} + 3 a^q \gamma^{q^3+q^2}\gamma^{q^4} - a^q \gamma^{2q^4+q^2} + a^q\gamma^{3q^3} - 2 a^q\gamma^{q^4+2q^3} + a^q\gamma^{2q^4+q^3} + a^{q^3} \gamma^{q^4+q+1}\\
&- a^{q^3} \gamma^{q^4+q^2+1} + a^{q^3} \gamma^{q^4+q^3+1} - a^{q^3} \gamma^{2q^4+1} - a^{q^3} \gamma^{q^4+2q} + a^{q^3} \gamma^{q^4+q^2+q} - a^{q^3} \gamma^{q^4+q^3+q} + a^{q^3} \gamma^{2q^4+q};
\end{align*}
\begin{align*}
r_1(\mathbf{y})=&+y_4y_5a^{q^5+1} + y_4y_5a^{q^4+q^3} - y_4y_5a^{q^5+q^4} - y_4y_5\gamma + y_4y_5\gamma^{q} - y_4y_5\gamma^{q^2} + y_4a^{q^4+q^3+1} - y_4a\gamma^{q^3}\\& + y_4a\gamma^{q^4} 
            - y_4a^{q^4}\gamma + y_4a^{q^4}\gamma^{q} - y_4a^{q^4}\gamma^{q^2} + y_4a^{q^4}\gamma^{q^3} - y_4a^{q^4}\gamma^{q^4} + y_5a^{q^5+q^3+1} - y_5a^{q^3}\gamma\\& + y_5a^{q^3}\gamma^{q} - y_5a^{q^3}\gamma^{q^2} + 
            y_5a^{q^3}\gamma^{q^3} - y_5a^{q^5}\gamma^{q^3} + a^{q^3+1}\gamma^{q^4} - \gamma^{q^3+1} + \gamma^{q^3+q} - \gamma^{q^3+q^2} + \gamma^{2q^3} - \gamma^{q^4+q^3};
\end{align*}

\begin{align*}
r_2(\mathbf{y})=&+y_2y_4y_5^2a^{q+1} - y_2y_4y_5^2a^{q^5+1} - y_2y_4y_5^2a^{q^2+q} + y_2y_4y_5^2a^{q^3+q^2} - y_2y_4y_5^2a^{q^4+q^3} + 
            y_2y_4y_5^2a^{q^5+q^4}\\
            &+ y_2y_4y_5a^{q^4+q} + y_2y_4y_5a^{q^3+q^2+1} - y_2y_4y_5a^{q^5+q^2+1} - y_2y_4y_5a^{q^4+q^3+1} + 
            y_2y_4y_5a\gamma^q - y_2y_4y_5a\gamma^{q^2}\\
            &+ y_2y_4y_5a\gamma^{q^3} - y_2y_4y_5a\gamma^{q^4} - y_2y_4y_5a^{q^4+q^2+q} + y_2y_4y_5a^{q^5+q^4+q^2} 
            - y_2y_4y_5a^{q^2}\gamma^q + y_2y_4y_5a^{q^2}\gamma^{q^2}\\
            &- y_2y_4y_5a^{q^4}\gamma^{q^3} + y_2y_4y_5a^{q^4}\gamma^{q^4} + y_2y_4a^{q^2+1}\gamma^{q^3} - y_2y_4a^{q^2+1}\gamma^{q^4} + 
            y_2y_4a^{q^4+1}\gamma^q - y_2y_4a^{q^4+1}\gamma^{q^2}\\
            &- y_2y_4a^{q^4+q^2}\gamma^q + y_2y_4a^{q^4+q^2}\gamma^{q^2} - y_2y_4a^{q^4+q^2}\gamma^{q^3} + y_2y_4a^{q^4+q^2}\gamma^{q^4} + 
            y_2y_5^2a^{q^3+q+1} - y_2y_5^2a^{q^5+q^3+1}\\
            &- y_2y_5^2a^{q^5+q^2+q} + y_2y_5^2a^{q^5+q^3+q^2} - y_2y_5^2a^{q^3}\gamma^q + y_2y_5^2a^{q^3}\gamma^{q^2} 
            - y_2y_5^2a^{q^3}\gamma^{q^3} + y_2y_5^2a^{q^5}\gamma^q - y_2y_5^2a^{q^5}\gamma^{q^2} \\
            &+ y_2y_5^2a^{q^5}\gamma^{q^3} + y_2y_5a^{q^3+q^2+q+1} - y_2y_5a^{q^5+q^2+q+1} + 
            y_2y_5a^{q+1}\gamma^{q^3} - y_2y_5a^{q^3+1}\gamma^{q^4} + y_2y_5a^{q^5+1}\gamma^q \\
            &- y_2y_5a^{q^5+1}\gamma^{q^2} - y_2y_5a^{q^2+q}\gamma^{q^4} - y_2y_5a^{q^3+q^2}\gamma^q + 
            y_2y_5a^{q^3+q^2}\gamma^{q^2} - y_2y_5a^{q^3+q^2}\gamma^{q^3} + y_2y_5a^{q^3+q^2}\gamma^{q^4}\\
            &+ y_2y_5a^{q^5+q^2}\gamma^{q^3} - y_2y_5\gamma^{q^3+q} + y_2y_5\gamma^{q^4+q} + 
            y_2y_5\gamma^{q^3+q^2} - y_2y_5\gamma^{q^4+q^2} - y_2y_5\gamma^{2q^3} + y_2y_5\gamma^{q^4+q^3} \\
            &+ y_2a^{q^2+q+1}\gamma^{q^3} - y_2a^{q^2+q+1}\gamma^{q^4} + y_2a\gamma^{q^4+q} - 
            y_2a\gamma^{q^4+q^2} - y_2a^{q^2}\gamma^{q^3+q} + y_2a^{q^2}\gamma^{q^3+q^2} - y_2a^{q^2}\gamma^{2q^3}\\
            &+ y_2a^{q^2}\gamma^{q^4+q^3} + y_4y_5^2a^{q^5+q+1} - y_4y_5^2a^{2q^5+1} - 
            y_4y_5^2a^q\gamma^{q^2} - y_4y_5^2a^{q^5+q^4+q^3} + y_4y_5^2a^{q^3}\gamma^q + y_4y_5^2a^{2q^5+q^4}\\
            &- y_4y_5^2a^{q^5}\gamma^q + y_4y_5^2a^{q^5}\gamma^{q^2} + 
            y_4y_5a^{q^5+q^4+q+1} - y_4y_5a^{q+1}\gamma^{q^3} + y_4y_5a^{q+1}\gamma^{q^4} - y_4y_5a^{q^5+q^4+q^3+1}\displaybreak\\
            &+ y_4y_5a^{q^3+1}\gamma^q - y_4y_5a^{q^5+1}\gamma^{q^2} 
            + 2y_4y_5a^{q^5+1}\gamma^{q^3} - 2y_4y_5a^{q^5+1}\gamma^{q^4} - y_4y_5a^{q^4+q}\gamma^{q^2} + y_4y_5a^{q^4+q^3}\gamma^q \\
            &- y_4y_5a^{q^4+q^3}\gamma^{q^2} + y_4y_5a^{q^4+q^3}\gamma^{q^3} 
            - y_4y_5a^{q^4+q^3}\gamma^{q^4} - y_4y_5a^{q^5+q^4}\gamma^q + 2y_4y_5a^{q^5+q^4}\gamma^{q^2} - 2y_4y_5a^{q^5+q^4}\gamma^{q^3}\\
            &+ 2y_4y_5a^{q^5+q^4}\gamma^{q^4} - y_4y_5\gamma^{q^2+q} +
            y_4y_5\gamma^{q^3+q} - y_4y_5\gamma^{q^4+q} + y_4y_5\gamma^{2q^2} - y_4y_5\gamma^{q^3+q^2} + y_4y_5\gamma^{q^4+q^2} \\
            &-y_4a^{q^4+q+1}\gamma^{q^3} + y_4a^{q^4+q+1}\gamma^{q^4} + 
            y_4a^{q^4+q^3+1}\gamma^q - y_4a^{q^4+q^3+1}\gamma^{q^2} + y_4a^{q^4+q^3+1}\gamma^{q^3} - y_4a^{q^4+q^3+1}\gamma^{q^4} \\
            &+ y_4a\gamma^{q^3+q^2} - y_4a\gamma^{q^4+q^2} - y_4a\gamma^{2q^3}
            + 2y_4a\gamma^{q^4+q^3} - y_4a\gamma^{2q^4} - y_4a^{q^4}\gamma^{q^2+q} + y_4a^{q^4}\gamma^{q^3+q} - y_4a^{q^4}\gamma^{q^4+q} \\
            &+ y_4a^{q^4}\gamma^{2q^2} - 2y_4a^{q^4}\gamma^{q^3+q^2} + 
            2y_4a^{q^4}\gamma^{q^4+q^2} + y_4a^{q^4}\gamma^{2q^3} - 2y_4a^{q^4}\gamma^{q^4+q^3} + y_4a^{q^4}\gamma^{2q^4} + y_5^2a^{q^5+q^3+q+1} \\
            &- y_5^2a^{2q^5+q^3+1} - 
            y_5^2a^{q^5+q}\gamma^{q^2} + y_5^2a^{q^5+q^3}\gamma^{q^2} - y_5^2a^{q^5+q^3}\gamma^{q^3} + y_5^2a^{2q^5}\gamma^{q^3} + y_5a^{q^3+q+1}\gamma^{q^2} - y_5a^{q^3+q+1}\gamma^{q^3}\\
            &+ y_5a^{q^3+q+1}\gamma^{q^4} - y_5a^{q^5+q+1}\gamma^{q^2} + y_5a^{q^5+q+1}\gamma^{q^3} + y_5a^{q^5+q^3+1}\gamma^q - y_5a^{q^5+q^3+1}\gamma^{q^2} + y_5a^{q^5+q^3+1}\gamma^{q^3} \\
            &-  2y_5a^{q^5+q^3+1}\gamma^{q^4} - y_5a^q\gamma^{q^4+q^2} - y_5a^{q^3}\gamma^{q^2+q} + y_5a^{q^3}\gamma^{q^3+q} + y_5a^{q^3}\gamma^{2q^2} - 2y_5a^{q^3}\gamma^{q^3+q^2} + y_5a^{q^3}\gamma^{q^4+q^2} \\
            &+ y_5a^{q^3}\gamma^{2q^3} - y_5a^{q^3}\gamma^{q^4+q^3} - y_5a^{q^5}\gamma^{q^3+q} + 2y_5a^{q^5}\gamma^{q^3+q^2} - 2y_5a^{q^5}\gamma^{2q^3} + 2y_5a^{q^5}\gamma^{q^4+q^3} + a^{q+1}\gamma^{q^3+q^2}\\
            &- a^{q+1}\gamma^{q^4+q^2} - a^{q+1}\gamma^{2q^3} + a^{q+1}\gamma^{q^4+q^3} + a^{q^3+1}\gamma^{q^4+q} - a^{q^3+1}\gamma^{q^4+q^2} + a^{q^3+1}\gamma^{q^4+q^3} - a^{q^3+1}\gamma^{2q^4}\\
            &-\gamma^{q^3+q^2+q} + \gamma^{2q^3+q} -\gamma^{q^4+q^3+q} + \gamma^{q^3+2q^2} - 2\gamma^{2q^3+q^2} + 2\gamma^{q^4+q^3+q^2} + \gamma^{3q^3} - 2\gamma^{q^4+2q^3} + \gamma^{2q^4+q^3};
\end{align*}
\begin{align*}
r_3(\mathbf{y}):=&+y_2^2y_4y_5a^{q^2+q+1} - y_2^2y_4y_5a^{q^4+q+1} - y_2^2y_4y_5a^{2q^2+q} + y_2^2y_4y_5a^{q^4+q^2+q} + y_2^2y_4y_5a^{q^3+2q^2} \\
            &- 
            y_2^2y_4y_5a^{q^4+q^3+q^2} - y_2^2y_4y_5a^{q^2}\gamma + y_2^2y_4y_5a^{q^2}\gamma^q - y_2^2y_4y_5a^{q^2}\gamma^{q^2} + y_2^2y_4y_5a^{q^4}\gamma - 
            y_2^2y_4y_5a^{q^4}\gamma^q \\
            &+ y_2^2y_4y_5a^{q^4}\gamma^{q^2} + y_2^2y_4a^{q^3+2q^2+1}  - y_2^2y_4a^{q^4+q^3+q^2+1} + y_2^2y_4a^{q^2+1}\gamma^q - 
            y_2^2y_4a^{q^2+1}\gamma^{q^2} - y_2^2y_4a^{q^4+1}\gamma^q \\
            &+ y_2^2y_4a^{q^4+1}\gamma^{q^2} - y_2^2y_4a^{2q^2}\gamma + y_2^2y_4a^{q^4+q^2}\gamma + 
            y_2^2y_5a^{q^3+q^2+q+1} + y_2^2y_5a^{q^5+q^2+q+1} - y_2^2y_5a^{q+1}\gamma^{q^3} \\
            &- y_2^2y_5a^{q^5+q^3+q^2+1} - y_2^2y_5a^{q^5+1}\gamma^q + 
            y_2^2y_5a^{q^5+1}\gamma^{q^2} - y_2^2y_5a^{q^5+2q^2+q} - y_2^2y_5a^{q^2+q}\gamma + y_2^2y_5a^{q^2+q}\gamma^q\\
            & - y_2^2y_5a^{q^2+q}\gamma^{q^2} + 
            y_2^2y_5a^{q^2+q}\gamma^{q^3} + y_2^2y_5a^{q^5+q^3+2q^2} - y_2^2y_5a^{q^3+q^2}\gamma^q + y_2^2y_5a^{q^3+q^2}\gamma^{q^2} - y_2^2y_5a^{q^3+q^2}\gamma^{q^3} \\
            &+ 
            y_2^2y_5a^{q^5+q^2}\gamma^q - y_2^2y_5a^{q^5+q^2}\gamma^{q^2} + y_2^2y_5\gamma^{q+1} - y_2^2y_5\gamma^{q^2+1} + y_2^2y_5\gamma^{q^3+1} - y_2^2y_5\gamma^{2q} + 
            2y_2^2y_5\gamma^{q^2+q} \\
            &- y_2^2y_5\gamma^{q^3+q} - y_2^2y_5\gamma^{2q^2} + y_2^2y_5\gamma^{q^3+q^2} + y_2^2a^{q^3+2q^2+q+1} - y_2^2a^{q^2+q+1}\gamma^{q^3} + 
            y_2^2a^{q^2+q+1}\gamma^{q^4} - y_2^2a^{q^3+q^2+1}\gamma^{q^4} \\
            &- y_2^2a\gamma^{q^4+q} + y_2^2a\gamma^{q^4+q^2} - y_2^2a^{2q^2+q}\gamma + y_2^2a^{2q^2+q}\gamma^q - 
            y_2^2a^{2q^2+q}\gamma^{q^2} + y_2^2a^{2q^2+q}\gamma^{q^3} - y_2^2a^{2q^2+q}\gamma^{q^4} \\
            & - y_2^2a^{q^3+2q^2}\gamma^q + y_2^2a^{q^3+2q^2}\gamma^{q^2} - y_2^2a^{q^3+2q^2}\gamma^{q^3}
            + y_2^2a^{q^3+2q^2}\gamma^{q^4} + y_2^2a^{q^2}\gamma^{q+1} - y_2^2a^{q^2}\gamma^{q^2+1} + y_2^2a^{q^2}\gamma^{q^3+1} \\
            &- y_2^2a^{q^2}\gamma^{2q} + 2y_2^2a^{q^2}\gamma^{q^2+q} - 
            y_2^2a^{q^2}\gamma^{q^3+q} + y_2^2a^{q^2}\gamma^{q^4+q} - y_2^2a^{q^2}\gamma^{2q^2} + y_2^2a^{q^2}\gamma^{q^3+q^2} - y_2^2a^{q^2}\gamma^{q^4+q^2}\\
            & - y_2y_4y_5a^{q^4+q^3+q+1} - 
            y_2y_4y_5a^{q^5+q^4+q+1} + y_2y_4y_5a^{q+1}\gamma^{q^2} + y_2y_4y_5a^{q^5+q^3+q^2+1} + y_2y_4y_5a^{q^5+1}\gamma^q \\
            &- y_2y_4y_5a^{q^5+1}\gamma^{q^2} + 
            y_2y_4y_5a^{q^5+q^4+q^2+q} + y_2y_4y_5a^{q^2+q}\gamma - y_2y_4y_5a^{q^2+q}\gamma^q - y_2y_4y_5a^{q^2+q}\gamma^{q^2}\\
            & + y_2y_4y_5a^{q^4+q}\gamma^{q^2} - 
            y_2y_4y_5a^{q^5+q^4+q^3+q^2} - y_2y_4y_5a^{q^3+q^2}\gamma + 2y_2y_4y_5a^{q^3+q^2}\gamma^q - y_2y_4y_5a^{q^5+q^2}\gamma \\
            &+ y_2y_4y_5a^{q^4+q^3}\gamma - 
            y_2y_4y_5a^{q^4+q^3}\gamma^q + y_2y_4y_5a^{q^5+q^4}\gamma - y_2y_4y_5a^{q^5+q^4}\gamma^q + y_2y_4y_5a^{q^5+q^4}\gamma^{q^2} \\
            &- y_2y_4y_5\gamma^{q+1} + 
            y_2y_4y_5\gamma^{2q} - y_2y_4y_5\gamma^{q^2+q} - y_2y_4a^{q^4+q^3+q^2+q+1} + y_2y_4a^{q^4+q+1}\gamma^{q^3} - y_2y_4a^{q^4+q+1}\gamma^{q^4}\\
            & + 
            y_2y_4a^{q^3+q^2+1}\gamma^q + y_2y_4a^{q^3+q^2+1}\gamma^{q^2} - y_2y_4a^{q^3+q^2+1}\gamma^{q^3} +y_2y_4a^{q^3+q^2+1}\gamma^{q^4} - y_2y_4a^{q^4+q^3+1}\gamma^q \displaybreak\\
            &+ 
            y_2y_4a\gamma^{q^2+q} - y_2y_4a\gamma^{q^3+q} + y_2y_4a\gamma^{q^4+q} - y_2y_4a\gamma^{2q^2} + y_2y_4a\gamma^{q^3+q^2} - y_2y_4a\gamma^{q^4+q^2} + 
            y_2y_4a^{q^4+q^2+q}\gamma \\
            &- y_2y_4a^{q^4+q^2+q}\gamma^q + y_2y_4a^{q^4+q^2+q}\gamma^{q^2} - y_2y_4a^{q^4+q^2+q}\gamma^{q^3} + y_2y_4a^{q^4+q^2+q}\gamma^{q^4} + 
            y_2y_4a^{q^4+q^3+q^2}\gamma^q\\
            & - y_2y_4a^{q^4+q^3+q^2}\gamma^{q^2} + y_2y_4a^{q^4+q^3+q^2}\gamma^{q^3} - y_2y_4a^{q^4+q^3+q^2}\gamma^{q^4} - 2y_2y_4a^{q^2}\gamma^{q^2+1} + 
            y_2y_4a^{q^2}\gamma^{q^3+1} \\
            &- y_2y_4a^{q^2}\gamma^{q^4+1} - y_2y_4a^{q^4}\gamma^{q+1} + 2y_2y_4a^{q^4}\gamma^{q^2+1} - y_2y_4a^{q^4}\gamma^{q^3+1} + y_2y_4a^{q^4}\gamma^{q^4+1} + 
            y_2y_4a^{q^4}\gamma^{2q}\\
            & - 2y_2y_4a^{q^4}\gamma^{q^2+q} + y_2y_4a^{q^4}\gamma^{q^3+q} - y_2y_4a^{q^4}\gamma^{q^4+q} + y_2y_4a^{q^4}\gamma^{2q^2} - y_2y_4a^{q^4}\gamma^{q^3+q^2} + 
            y_2y_4a^{q^4}\gamma^{q^4+q^2} \\
            &+ y_2y_5a^{q^5+q^3+q^2+q+1} + y_2y_5a^{q^3+q+1}\gamma^{q^2} - y_2y_5a^{q^3+q+1}\gamma^{q^3} + y_2y_5a^{q^5+q+1}\gamma^{q^2} - 
            y_2y_5a^{q^5+q+1}\gamma^{q^3} \\
            &- y_2y_5a^{q^5+q^3+1}\gamma^q - 2y_2y_5a^{q^5+q^2+q}\gamma^{q^2} + y_2y_5a^{q^5+q^2+q}\gamma^{q^3} - y_2y_5a^q\gamma^{q^2+1} + 
            y_2y_5a^q\gamma^{q^2+q} - y_2y_5a^q\gamma^{2q^2} \\
            &+ y_2y_5a^q\gamma^{q^3+q^2} - y_2y_5a^{q^5+q^3+q^2}\gamma + y_2y_5a^{q^5+q^3+q^2}\gamma^q + 
            y_2y_5a^{q^5+q^3+q^2}\gamma^{q^2} - y_2y_5a^{q^5+q^3+q^2}\gamma^{q^3}\\
            & + y_2y_5a^{q^3}\gamma^{q+1} - y_2y_5a^{q^3}\gamma^{q^2+1} + y_2y_5a^{q^3}\gamma^{q^3+1} - y_2y_5a^{q^3}\gamma^{2q} +
            y_2y_5a^{q^3}\gamma^{q^2+q} - y_2y_5a^{q^3}\gamma^{q^3+q}\\
            & + y_2y_5a^{q^5}\gamma^{q^3+1} + y_2y_5a^{q^5}\gamma^{q^2+q} - y_2y_5a^{q^5}\gamma^{q^3+q} - y_2y_5a^{q^5}\gamma^{2q^2} + 
            y_2y_5a^{q^5}\gamma^{q^3+q^2} + 2y_2a^{q^3+q^2+q+1}\gamma^{q^2} \\
            &- 2y_2a^{q^3+q^2+q+1}\gamma^{q^3} + y_2a^{q^3+q^2+q+1}\gamma^{q^4} - y_2a^{q+1}\gamma^{q^3+q^2} + 
            y_2a^{q+1}\gamma^{q^4+q^2} + y_2a^{q+1}\gamma^{2q^3} - y_2a^{q+1}\gamma^{q^4+q^3}\\
            & - y_2a^{q^3+1}\gamma^{q^4+q} - 2y_2a^{q^2+q}\gamma^{q^2+1} + y_2a^{q^2+q}\gamma^{q^3+1} + 
            2y_2a^{q^2+q}\gamma^{q^2+q} - y_2a^{q^2+q}\gamma^{q^3+q} - 2y_2a^{q^2+q}\gamma^{2q^2}\\
            & + 3y_2a^{q^2+q}\gamma^{q^3+q^2} - 2y_2a^{q^2+q}\gamma^{q^4+q^2} - y_2a^{q^2+q}\gamma^{2q^3}
            + y_2a^{q^2+q}\gamma^{q^4+q^3} + y_2a^{q^3+q^2}\gamma^{q+1} - y_2a^{q^3+q^2}\gamma^{q^2+1} \\
            & + y_2a^{q^3+q^2}\gamma^{q^3+1} - y_2a^{q^3+q^2}\gamma^{q^4+1} - y_2a^{q^3+q^2}\gamma^{2q} + 
            y_2a^{q^3+q^2}\gamma^{q^4+q} + y_2a^{q^3+q^2}\gamma^{2q^2} - 2y_2a^{q^3+q^2}\gamma^{q^3+q^2} \\
            &+ y_2a^{q^3+q^2}\gamma^{q^4+q^2} + y_2a^{q^3+q^2}\gamma^{2q^3} - y_2a^{q^3+q^2}\gamma^{q^4+q^3} + 
            y_2\gamma^{q^2+q+1} - y_2\gamma^{q^3+q+1} - y_2\gamma^{2q^2+1}\\
            & + 2y_2\gamma^{q^3+q^2+1} - y_2\gamma^{2q^3+1} + y_2\gamma^{q^4+q^3+1} - y_2\gamma^{q^2+2q} + 
            y_2\gamma^{q^3+2q} + 2y_2\gamma^{2q^2+q} - 3y_2\gamma^{q^3+q^2+q}\\
            & + y_2\gamma^{q^4+q^2+q} + y_2\gamma^{2q^3+q} - y_2\gamma^{q^4+q^3+q} - y_2\gamma^{3q^2} + 
            2y_2\gamma^{q^3+2q^2} - y_2\gamma^{q^4+2q^2} - y_2\gamma^{2q^3+q^2} + y_2\gamma^{q^4+q^3+q^2}\\
            & - y_4y_5a^{q^5+q^4+q^3+q+1} + y_4y_5a^{q^5+q^3+1}\gamma^q + 
            y_4y_5a^{q^5+q^4+q}\gamma^{q^2} + y_4y_5a^q\gamma^{q^2+1} - y_4y_5a^q\gamma^{q^2+q} \\
            &+ y_4y_5a^{q^5+q^4+q^3}\gamma - y_4y_5a^{q^5+q^4+q^3}\gamma^q - 
            y_4y_5a^{q^3}\gamma^{q+1} + y_4y_5a^{q^3}\gamma^{2q} - y_4y_5a^{q^5}\gamma^{q^2+1} - y_4a^{q^4+q^3+q+1}\gamma^{q^2}\\
            & + y_4a^{q^4+q^3+q+1}\gamma^{q^3} - 
            y_4a^{q^4+q^3+q+1}\gamma^{q^4} + y_4a^{q^3+1}\gamma^{q^2+q} - y_4a^{q^3+1}\gamma^{q^3+q} + y_4a^{q^3+1}\gamma^{q^4+q} + y_4a^{q^4+q}\gamma^{q^2+1}\\
            & - y_4a^{q^4+q}\gamma^{q^2+q} + 
            y_4a^{q^4+q}\gamma^{2q^2} - y_4a^{q^4+q}\gamma^{q^3+q^2} + y_4a^{q^4+q}\gamma^{q^4+q^2} - y_4a^{q^4+q^3}\gamma^{q+1} + y_4a^{q^4+q^3}\gamma^{q^2+1} \\
            &- y_4a^{q^4+q^3}\gamma^{q^3+1} + 
            y_4a^{q^4+q^3}\gamma^{q^4+1} + y_4a^{q^4+q^3}\gamma^{2q} - y_4a^{q^4+q^3}\gamma^{q^2+q} + y_4a^{q^4+q^3}\gamma^{q^3+q} - y_4a^{q^4+q^3}\gamma^{q^4+q} \\
            &- y_4\gamma^{2q^2+1} + 
            y_4\gamma^{q^3+q^2+1} - y_4\gamma^{q^4+q^2+1} + y_5a^{q^5+q^3+q+1}\gamma^{q^2} - y_5a^{q^5+q^3+q+1}\gamma^{q^3} - y_5a^{q^5+q}\gamma^{2q^2}\\
            & + y_5a^{q^5+q}\gamma^{q^3+q^2} - 
            y_5a^{q^5+q^3}\gamma^{q^2+1} + y_5a^{q^5+q^3}\gamma^{q^3+1} + y_5a^{q^5+q^3}\gamma^{q^2+q} - y_5a^{q^5+q^3}\gamma^{q^3+q} + a^{q^3+q+1}\gamma^{2q^2} \\
            &- 2a^{q^3+q+1}\gamma^{q^3+q^2} + 
            a^{q^3+q+1}\gamma^{q^4+q^2} + a^{q^3+q+1}\gamma^{2q^3} - a^{q^3+q+1}\gamma^{q^4+q^3} - a^q\gamma^{2q^2+1} + a^q\gamma^{q^3+q^2+1}\\
            & + a^q\gamma^{2q^2+q} - a^q\gamma^{q^3+q^2+q} - 
            a^q\gamma^{3q^2} + 2a^q\gamma^{q^3+2q^2} - a^q\gamma^{q^4+2q^2} - a^q\gamma^{2q^3+q^2} + a^q\gamma^{q^4+q^3+q^2} + a^{q^3}\gamma^{q^2+q+1}\\
            & - a^{q^3}\gamma^{q^3+q+1} - a^{q^3}\gamma^{2q^2+1} + 
            2a^{q^3}\gamma^{q^3+q^2+1} - a^{q^3}\gamma^{q^4+q^2+1} - a^{q^3}\gamma^{2q^3+1} + a^{q^3}\gamma^{q^4+q^3+1} - a^{q^3}\gamma^{q^2+2q}\\
            & + a^{q^3}\gamma^{q^3+2q} + a^{q^3}\gamma^{2q^2+q} - 
            2a^{q^3}\gamma^{q^3+q^2+q} + a^{q^3}\gamma^{q^4+q^2+q} + a^{q^3}\gamma^{2q^3+q} - a^{q^3}\gamma^{q^4+q^3+q}.
\end{align*}

\end{document}